\documentclass[11pt]{article} 
\usepackage{amssymb,amsmath,amsthm}
\usepackage[letterpaper,hmargin=1.0in,vmargin=1.0in]{geometry}
\usepackage{graphics} \usepackage{tikz}
\usepackage{color} \usepackage[boxed]{algorithm}
\pagestyle{plain}

\newtheorem{theorem}{Theorem}[section]
\newtheorem{lemma}[theorem]{Lemma}
\newtheorem{proposition}[theorem]{Proposition}


 %

 
 \newcommand{\E}{\mathbb E}

\newcommand{\HY}{\mathcal H}

\begin{document} 
	\title{Many cliques in $H$-free subgraphs of random graphs\\
	} \author{ Noga Alon\thanks{Sackler School of Mathematics and Blavatnik
		School of Computer Science, Tel Aviv University, Tel Aviv 69978, Israel
		and \color{black} CMSA, Harvard University, Cambridge, MA 02138,
		USA\color{black}. Email: nogaa@tau.ac.il. Research supported in part by
		an ISF grant and by a GIF grant. }\and Alexandr
	Kostochka\thanks{Department of Mathematics, University of Illinois at
		Urbana-Champaign, Urbana, IL 61801, USA and Sobolev Institute of Mathematics,  Novosibirsk
		630090, Russia. Email: kostochk@math.uiuc.edu. Research of this author
		is supported in part by NSF grant   DMS-1600592 and by grants
		15-01-05867 and 16-01-00499  of the Russian Foundation for Basic
		Research.} \and Clara Shikhelman \thanks{Sackler School of Mathematics,
		Tel Aviv University, Tel Aviv 69978, Israel. Email:
		clarashk@mail.tau.ac.il. Research supported in part by an ISF grant. }
}

\setlength{\parskip}{1ex plus 0.5ex minus 0.2ex}

\maketitle

\begin{abstract} For two fixed graphs $T$ and $H$ let $ex(G(n,p),T,H)$
	be the random variable counting the maximum number of copies of $T$ in
	an $H$-free subgraph of the random graph $G(n,p)$. We show that for the
	case $T=K_m$ and $\chi(H)> m$ the behavior of $ex(G(n,p),K_m,H)$ depends
	strongly on the relation between $p$ and \color{black}
	$m_2(H)=\max_{H'\subseteq H, |V(H')|'\geq 3}\left\{
	\frac{e(H')-1}{v(H')-2} \right\}$. \color{black}
	
	When $m_2(H)> m_2(K_m)$ we prove that with high probability, depending
	on the value of $p$, either one can maintain almost all copies of
	$K_m$, or it is asymptotically best to take a $\chi(H)-1$ partite
	subgraph of $G(n,p)$.  The transition between these two behaviors
	occurs at $p=n^{-1/m_2(H)}$. When $m_2(H)< m_2(K_m)$ we show that the
	above cases still exist, however for $\delta>0$ small at
	$p=n^{-1/m_2(H)+\delta}$ one can typically still keep most of the
	copies of $K_m$ in an $H$-free subgraph of $G(n,p)$. Thus, the
	transition between the two behaviors in this case occurs at some $p$
	significantly bigger than $n^{-1/m_2(H)}$.
	
	To show that the second case is not redundant we present a construction
	which  may be of independent interest. For each $k \geq 4$ we construct
	a family of $k$-chromatic graphs $G(k,\epsilon_i)$ where
	$m_2(G(k,\epsilon_i))$ tends to $\frac{(k+1)(k-2)}{2(k-1)}<
	m_2(K_{k-1})$ as $i$ tends to infinity. This is tight for all values of
	$k$ as for any $k$-chromatic graph $G$,
	$m_2(G)>\frac{(k+1)(k-2)}{2(k-1)}$. \vspace{0.5cm}
	
	\noindent Key words: Tur\'an type problems, random graphs, chromatic number.
\end{abstract}

\section{Introduction}

The well known  \color{black}Tur\'an \color{black} function, denoted
$ex(n,H)$, counts the maximum number of edges in an $H$-free subgraph of
the complete graph on $n$ vertices (see for example \cite{Si} for a
survey). A natural generalization of this question is to  change the
base graph and instead of taking a subgraph of the complete graph
consider a subgraph of a random graph. More precisely let $G(n,p)$ be
the random graph on $n$ vertices where each edge is chosen randomly and
independently with probability $p$. Let $ex(G(n,p),H)$ denote the random
variable counting the maximum number of edges in an $H$-free subgraph of
$G(n,p)$.

The behavior of $ex(G(n,p),H)$  is studied in 
\cite{FR}, and  additional results appear in \cite{RR}, \cite{KLR},
\cite{HKL95}, \cite{HKL96} and more. 
%
Taking an extremal graph $G$ which is $H$-free on $n$ vertices with
$ex(n,H)$ edges and then keeping each edge of $G$ randomly and
independently with probability $p$ shows that w.h.p., that is, with
probability tending to 1 as $n$ tends to infinity,

$$ex(G(n,p),H)\geq (1+o(1)) ex(n,H)p.$$

In \cite{KLR} Kohayakawa, \L uczak and R\"odl  and in \cite{HKL95}
Haxell, Kohayakawa and \L uczak conjectured that the opposite inequality
is asymptotically valid  for values of $p$ for which each edge in
$G(n,p)$ takes part in a copy of $H$.

This conjecture was proved by Conlon and Gowers in \cite{CG}, for the
balanced case, and  by Schacht in \cite{Sc} for general graphs (see also
\cite{BMS} and \cite{ST}). 
Motivated by the condition that each edge is in a copy of $H$, define
the {\em 2-density} of a graph $H$, denoted by $m_2(H)$, to be

$$m_2(H)=\max_{H'\subseteq H, v(H')\geq 3}\left\{
\frac{e(H')-1}{v(H')-2} \right\}.$$ The Erd\H{o}s-Simonovits-Stone
theorem states that
$ex(n,H)=\binom{n}{2}\left(1-\frac{1}{\chi(H)-1}+o(1)\right)$, and so
the theorem proved in the papers above, restated in simpler terms is the
following \color{black}
\begin{theorem}[\cite{CG},\cite{Sc}]\label{thm:RegRandTur} For any fixed
	graph $H$ the following holds w.h.p.\ $$ex(G(n,p),H)=\begin{cases}
	\big(1-\frac{1}{\chi(H)-1}+o(1)\big)\binom{n}{2} p & \text{for } p \gg
	n^{-1/m_2(H)} \\ (1+o(1))\binom{n}{2}p & \text{for } p \ll n^{-1/m_2(H)}
	\end{cases} $$\end{theorem}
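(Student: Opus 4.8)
\emph{Proof proposal.}\ The equality of Theorem~\ref{thm:RegRandTur} is really four one-sided statements, three of which I would dispatch quickly; the substance is the upper bound in the dense regime $p\gg n^{-1/m_2(H)}$, which is the Conlon--Gowers/Schacht theorem. In the sparse regime the upper bound is immediate, since w.h.p.\ $ex(G(n,p),H)\le e(G(n,p))=(1+o(1))\binom n2 p$ by a Chernoff bound. For the matching lower bound I would use the deletion method: pick $H'\subseteq H$ attaining $m_2(H)=\frac{e(H')-1}{v(H')-2}$; then $\E[\#\{\text{copies of }H'\}]=\Theta(n^{v(H')}p^{e(H')})=\Theta\!\big(n^2p\cdot n^{v(H')-2}p^{e(H')-1}\big)$, and $p\ll n^{-1/m_2(H)}$ drives $n^{v(H')-2}p^{e(H')-1}\to 0$, so by Markov w.h.p.\ there are $o(n^2p)$ copies of $H'$. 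Deleting one edge from each destroys every copy of $H'$, hence of $H$, at a cost of $o(\binom n2 p)$ edges. In the dense regime the lower bound $(1-\frac1{\chi(H)-1}+o(1))\binom n2 p$ comes from intersecting $G(n,p)$ with a balanced complete $(\chi(H)-1)$-partite graph on $[n]$, which is $H$-free with $(1-\frac1{\chi(H)-1}+o(1))\binom n2$ edges, a $(1+o(1))p$-fraction of which survives in $G(n,p)$ w.h.p.\ by Chernoff.

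For the remaining, hard direction my plan is the hypergraph container method \cite{BMS,ST}. Let $\mathcal F$ be the $e(H)$-uniform hypergraph on vertex set $E(K_n)$ whose hyperedges are the edge-sets of copies of $H$, so that $H$-free graphs on $[n]$ are exactly the independent sets of $\mathcal F$. The container lemma (its hypotheses hold automatically for the ``copies-of-$H$'' hypergraph) yields a family $\mathcal C$ of subgraphs of $K_n$ with: (i) every $H$-free graph on $[n]$ is a subgraph of some $C\in\mathcal C$; (ii) every $C\in\mathcal C$ contains at most $\varepsilon n^{v(H)}$ copies of $H$; and (iii) $\log|\mathcal C|=O\!\big(n^{2-1/m_2(H)}\log n\big)$. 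By the Erd\H{o}s--Stone--Simonovits value $ex(n,H)=(1-\frac1{\chi(H)-1}+o(1))\binom n2$ together with Erd\H{o}s--Simonovits supersaturation, property (ii) forces $e(C)\le(1-\frac1{\chi(H)-1}+\varepsilon')\binom n2$ for all $C$, with $\varepsilon'\to0$ as $\varepsilon\to0$. Since any $H$-free subgraph of $G(n,p)$ lies inside $G(n,p)\cap C$ for some container, a union bound and a Chernoff bound for $\mathrm{Bin}(e(C),p)$ give
\[
\Pr\!\Big[ex(G(n,p),H)>\big(1-\tfrac1{\chi(H)-1}+2\varepsilon'\big)\tbinom n2 p\Big]\le |\mathcal C|\,e^{-c_{\varepsilon'}n^2p}\le \exp\!\Big(O\big(n^{2-1/m_2(H)}\log n\big)-c_{\varepsilon'}n^2p\Big),
\]
which tends to $0$ once $n^2p\gg n^{2-1/m_2(H)}\log n$; sending $\varepsilon\to0$ slowly with $n$ then completes the argument.

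The main obstacle is that the clean argument above only gives the threshold up to a logarithm, $p\gg n^{-1/m_2(H)}\log n$, whereas the theorem pins the transition to $n^{-1/m_2(H)}$ exactly. Removing that logarithmic factor is the genuinely difficult point: it requires either a refined container/removal analysis in which the number of containers is pushed down to $e^{o(n^2p)}$ precisely at the threshold by iterating with parameters tuned to the $2$-density, or the original approaches of Conlon--Gowers \cite{CG} and Schacht \cite{Sc}, which directly balance the number of dense $H$-free ``obstruction'' graphs against the probability that any one of them is a subgraph of $G(n,p)$, and in which $m_2(H)$ enters through expected counts of partial copies of $H$. A secondary point is that the supersaturation input feeding (ii) must be made quantitative (a $c(\varepsilon)\,n^{v(H)}$ lower bound on the copy count of $H$ in any graph with more than $(1-\frac1{\chi(H)-1}+\varepsilon)\binom n2$ edges), but that is classical; everything else here --- Markov, Chernoff, and the Tur\'an-graph construction --- is routine.
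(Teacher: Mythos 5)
This theorem is not proved in the paper at all: it is quoted as a black-box result of Conlon--Gowers \cite{CG} and Schacht \cite{Sc} (see also \cite{BMS}, \cite{ST}), so there is no internal argument of the paper to compare your attempt against. On its own terms, your write-up handles the three routine one-sided inequalities correctly: the sparse upper bound $ex(G(n,p),H)\le e(G(n,p))=(1+o(1))\binom n2 p$ via Chernoff, the sparse lower bound by the deletion method applied to a $2$-density-extremal $H'\subseteq H$ (Markov is the right tool since you only need an upper tail on the count of $H'$, and indeed $\E[\#H']=\Theta\bigl(n^2p\,(np^{m_2(H)})^{v(H')-2}\bigr)=o(n^2p)$ when $p\ll n^{-1/m_2(H)}$), and the dense lower bound by intersecting $G(n,p)$ with a balanced complete $(\chi(H)-1)$-partite graph.

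The genuine gap is exactly the one you flag in your own last paragraph. The container bound $\log|\mathcal C|=O\bigl(n^{2-1/m_2(H)}\log n\bigr)$ combined with Chernoff and a union bound yields the dense upper bound only when $n^2p\gg n^{2-1/m_2(H)}\log n$, i.e.\ for $p\gg n^{-1/m_2(H)}\log n$, whereas the theorem asserts the transition at $n^{-1/m_2(H)}$ with no polylogarithmic slack. Closing that logarithmic gap is the entire content of \cite{CG} and \cite{Sc}, and is also handled (by a multi-round container argument in which fingerprints are measured against the random graph rather than $K_n$) in \cite{BMS} and \cite{ST}; it is not a cosmetic refinement. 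As written, your proposal correctly assembles the easy three-quarters of the theorem and correctly diagnoses where the difficulty lies, but it establishes only a weaker statement with an extra $\log n$ in the threshold; the hard direction is identified rather than proved.
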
 \noindent \color{black} where here and in
what follows we write $f(n)\gg g(n)$ when $\underset{n\to\infty}{\lim}
\frac{f(n)}{g(n)}=\infty$. \color{black}

Another generalization of the classical \color{black}Tur\'an
\color{black}question is to ask for the maximum number of copies of a
graph $T$ in an $H$-free subgraph of the complete graph on $n$ vertices.
This function, denoted $ex(n,T,H)$, is studied in \cite{ASh} and in some
special cases in the references therein. Combining both generalizations
we define the following. For two graphs $T$ and $H,$  let
$ex(G(n,p),T,H)$ be the random variable whose value is the maximum
number of copies of $T$ in an $H$-free subgraph of $G(n,p)$. Note that
as before the expected value of $ex(G(n,p),T,H)$ is at least $
ex(n,T,H)p^{e(T)}$ for any $T$ and $H$.

In \cite{ASh} it is shown that for any $H$ with $\chi(H)=k>m$,
$ex(n,K_m,H)=(1+o(1))\binom{k-1}{m}(\frac{n}{k-1})^m$. This motivates
the following question analogous to the one answered in Theorem
\ref{thm:RegRandTur}: \color{black}{\em For which values of $p$  is it
	true that $ex(G(n,p),K_m,H)=(1+o(1))\binom{k-1}{m}(\frac{n}{k-1})^m
	p^{\binom{m}{2}}$ w.h.p.? } \color{black}

%
We show that the behavior of $ex(G(n,p),K_m,H)$ depends strongly on the
relation between $m_2(K_m)$ and $m_2(H)$. When $m_2(H)>m_2(K_m)$ there
are two regions in which the random variable behaves differently. If $p$
is much smaller than $n^{-1/m_2(H)}$ then the $H$-free subgraph of
$G\sim G(n,p)$ with the maximum number of copies of $K_m$ has w.h.p.
most of the copies of $K_m$ in $G$ as only a negligible number of edges
take part in a copy of $H$. When $p$ is much bigger than $n^{-1/m_2(H)}$
we can no longer keep most of the copies of $K_m$ in an $H$-free
subgraph and it is asymptotically best to take a  {$(k-1)$-}partite
subgraph of $G(n,p)$. \color{black} The last part also holds when
$m_2(H)=m_2(K_m)$. \color{black} Our first theorem is the following:

\color{black} \begin{theorem} \label{thm:m_2InEasyOrder} Let $H$ be a
	fixed graph with $\chi(H)=k>m$. If $p$ is such that
	$\binom{n}{m}p^{\binom{m}{2}}$ tends to infinity as $n$ tends to
	infinity then w.h.p.\
	
	$$ex(G(n,p),K_m,H)=\begin{cases} (1+o(1))\binom{k-1}{m}
	(\frac{n}{k-1})^m p^{\binom{m}{2}} & \text{for } p\gg n^{ -1/m_2(H)}
	\text{ \color{black} provided \color{black} }m_2(H)\geq m_2(K_m) \\
	(1+o(1))\binom{n}{m}p^{\binom{m}{2}} & \text{for }p\ll n^{ -1/m_2(H)}
	\text{ \color{black} provided \color{black} }m_2(H)> m_2(K_m)
	\end{cases} $$ \end{theorem}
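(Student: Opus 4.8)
The plan is to handle the two cases separately, with the lower bounds coming from simple constructions and the upper bounds requiring the real work.

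For the \emph{lower bounds}: in the first case ($p\gg n^{-1/m_2(H)}$, $m_2(H)\ge m_2(K_m)$), I take a balanced $(k-1)$-partition of the vertex set $[n]$ and keep only the edges of $G(n,p)$ running between distinct parts; this subgraph is $(k-1)$-partite, hence $\chi(H)=k$-colorable forces it to be $H$-free, and a routine second-moment computation (justified because $\binom{n}{m}p^{\binom m2}\to\infty$, so in particular the expected number of $K_m$'s on any fixed transversal set of parts tends to infinity) shows it contains w.h.p.\ $(1+o(1))\binom{k-1}{m}(n/(k-1))^m p^{\binom m2}$ copies of $K_m$. In the second case ($p\ll n^{-1/m_2(H)}$, $m_2(H)>m_2(K_m)$) I use the ``sparse regime'' idea: by Theorem~\ref{thm:RegRandTur} and its proof (or directly, since $p\ll n^{-1/m_2(H)}$ forces the expected number of copies of $H$ in $G(n,p)$ to be $o$ of the expected number of edges), w.h.p.\ one can delete $o(np^{\,?})$ edges --- in fact $o$ of the copies of $K_m$'s worth of edges --- to destroy all copies of $H$; more carefully, I delete one edge from every copy of $H$, and since each edge of $G(n,p)$ lies in $o(1)$ expected copies of $H$ (because $p\ll n^{-1/m_2(H)}$ and $m_2(H)>m_2(K_m)\ge 1$), a short union-bound/deletion argument shows that w.h.p.\ the number of $K_m$'s destroyed is $o(\binom nm p^{\binom m2})$, while $G(n,p)$ itself has $(1+o(1))\binom nm p^{\binom m2}$ copies of $K_m$ by the second moment method.

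For the \emph{upper bound in the first case} --- this is the heart of the matter. I want: any $H$-free subgraph $F\subseteq G(n,p)$ has w.h.p.\ at most $(1+o(1))\binom{k-1}{m}(n/(k-1))^m p^{\binom m2}$ copies of $K_m$. The natural route is to combine Theorem~\ref{thm:RegRandTur} with a ``weighted/entropy'' or ``container-style'' counting argument: since $m_2(H)\ge m_2(K_m)$ and $p\gg n^{-1/m_2(H)}$, we also have $p\gg n^{-1/m_2(K_m)}$, which is exactly the threshold above which $K_m$-counts in $G(n,p)$ concentrate and, more importantly, above which a ``$K_m$-supersaturation / transference'' statement holds. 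Concretely, I would invoke a sparse counting lemma (from the hypergraph container method or sparse regularity, as in \cite{CG},\cite{Sc},\cite{BMS},\cite{ST}): w.h.p.\ every subgraph $F\subseteq G(n,p)$ with at least $\big(1-\frac1{k-1}+\varepsilon\big)\binom n2 p$ edges already contains a copy of $H$; contrapositively an $H$-free $F$ has at most $\big(1-\frac1{k-1}+o(1)\big)\binom n2 p$ edges, and moreover --- this is the extra ingredient beyond Theorem~\ref{thm:RegRandTur} --- the number of $K_m$'s in such an $F$ is maximized, up to $(1+o(1))$, by the $(k-1)$-partite configuration. For the latter I would run a stability argument: the sparse stability version of Erd\H{o}s--Simonovits says that an $H$-free $F\subseteq G(n,p)$ with almost-extremal edge count is w.h.p.\ ``close'' (in edit distance relative to $G(n,p)$, i.e.\ differing in $o(n^2p)$ edges) to a $(k-1)$-partite subgraph of $G(n,p)$; then a direct count, again via the second moment method applied to the relevant $K_m$-statistics in $G(n,p)$, shows that changing $o(n^2 p)$ edges changes the number of $K_m$'s by only $o(n^m p^{\binom m2})$, and that the $(k-1)$-partite subgraph of $G(n,p)$ has at most $(1+o(1))\binom{k-1}{m}(n/(k-1))^m p^{\binom m2}$ copies of $K_m$ by the first (lower-bound) computation.

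The step I expect to be the main obstacle is precisely this sparse stability transference for $K_m$-counts: Theorem~\ref{thm:RegRandTur} gives the edge-count threshold but not a structural statement, and not a count of cliques. I would either quote a sparse-regularity stability lemma from the literature in the range $p\gg n^{-1/m_2(H)}$ and combine it with a sparse counting lemma for $K_m$ (valid since $p\gg n^{-1/m_2(K_m)}$), or --- if a clean black box is unavailable --- reprove what is needed using the hypergraph container method directly on the hypergraph whose vertices are edges of $K_n$ and whose hyperedges are copies of $H$: the containers are ``almost $(k-1)$-partite'' graphs, intersecting each with $G(n,p)$ and counting $K_m$'s inside (using that $p$ is above the $K_m$-threshold so that each container has the ``expected'' clique count) yields the bound. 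A secondary technical point is checking all the second-moment concentration statements under the mild hypothesis $\binom nm p^{\binom m2}\to\infty$ rather than a polynomial lower bound on $p$; here one restricts attention to counts that are genuinely growing and absorbs lower-order terms into the $o(1)$, which is routine but must be done with a little care near the boundary of the range.
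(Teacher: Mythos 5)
Your lower bounds match the paper exactly: a random $(k-1)$-partite subgraph plus second moment for the first regime, and deletion for the second. The upper bounds are where you diverge, and where your sketch has real gaps.

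For the \emph{first-case upper bound}, you propose stability (or, as a fallback, hypergraph containers): deduce that an $H$-free $F\subseteq G(n,p)$ with near-extremal edge count is close to $(k-1)$-partite, then count $K_m$'s. The paper does not use stability at all. It applies the sparse regularity lemma of Kohayakawa--R\"odl to the $K_m$-maximizing $H$-free subgraph $F$ directly, uses the counting lemma of Conlon--Gowers--Samotij--Schacht (Proposition~\ref{prop:CountingH}) to show the cluster graph must be $H$-free, invokes the dense result $ex(t,K_m,H)=(1+o(1))\binom{k-1}{m}(t/(k-1))^m$ from~\cite{ASh} to bound the number of $K_m$'s in the cluster graph, and again uses the CGSS counting lemma (now the strictly-balanced case for $K_m$, valid because $p\gg n^{-1/m_2(K_m)}$) to count $K_m$'s per cluster-clique, with Lemma~\ref{lem:CopiesOnEdge} controlling the contribution from edges inside parts, between sparse pairs, and between irregular pairs. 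This route works for \emph{any} $H$-free $F$, whether or not its edge count is near-extremal. Your stability route, as written, does not: the stability step only constrains $F$ when $e(F)\geq(1-\frac{1}{k-1}-o(1))\binom{n}{2}p$, but the $K_m$-maximizer need not have near-maximal edges, and you have not shown that $H$-free subgraphs with substantially fewer edges also have at most $(1+o(1))\binom{k-1}{m}(n/(k-1))^m p^{\binom m2}$ copies of $K_m$. That is a genuine gap, not just an omitted detail. Your container alternative is closer to something that could be made rigorous (containers are almost $(k-1)$-partite \emph{and} cover all $H$-free graphs), but there you still need a sharp clique count in (container)$\cap G(n,p)$ and a way to control the cliques coming from the ``almost'' part of a container; that is exactly the role Lemmas~\ref{lem:CopiesOnEdge} and~\ref{lem:CopiesOfK_mInSided} play in the paper, and the claim ``each container has the expected clique count'' cannot be cited as a black box.

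For the \emph{second-case upper bound}, your plan is morally right but the one sentence ``a short union-bound/deletion argument shows that w.h.p.\ the number of $K_m$'s destroyed is $o(\binom nm p^{\binom m2})$'' is carrying the entire proof. Knowing that the set $E_0$ of edges touching some copy of $H'$ has size $o(n^2p)$ does \emph{not} by itself bound the number of $K_m$'s containing an edge of $E_0$. In the range $p\gg n^{-1/m_2(K_m)}$ the paper proves exactly this conversion as Lemma~\ref{lem:CopiesOnEdge} (a second-moment argument on the number of $K_m$'s per edge); this is what Lemma~\ref{lem:MidRangeForEasyOrd} uses, together with a monotonicity trick to push slightly below the $K_m$-threshold. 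In the range $p<n^{-1/m_2(K_m)-\delta}$ the conversion fails and the paper switches arguments entirely (Lemma~\ref{lem:SmallValuesOfp}): it first removes the negligibly many overlapping $K_m$'s (Lemma~\ref{lem:oneK_mPerEdge}), then shows that the family $\{H_i\}$ of graphs obtained by gluing a $K_m$ onto an edge of $H'$ each has expected count $o(n^m p^{\binom m2})$, so deleting the shared edge from every such configuration kills few $K_m$'s. You would need to reproduce both of these pieces; neither is a routine union bound.

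Net: your lower bounds align with the paper, your first-case upper bound takes a different (and as written, incomplete) route, and your second-case upper bound identifies the right idea but suppresses the two nontrivial lemmas that make it go through.
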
 \color{black}

Theorem \ref{thm:m_2InEasyOrder} is valid when $m_2(H)>m_2(K_m)$. What
about graphs $H$ with $\chi(H)=k>m$ as before but $m_2(H)<m_2(K_m)$?
\color{black} Do such graphs $H$ exist at all? \color{black}

A graph $H$ is $k$-critical if $\chi(H)=k$ and  for any subgraph
$H'\subset H$, $\chi(H')<k$. In \cite{KY} Kostochka and Yancey show that
if $k \geq 4$ and $H$ is $k$-critical, then

$$ e(H) \geq  \left\lceil
\frac{(k+1)(k-2)v(H)-k(k-3)}{2(k-1)}\right\rceil.$$ This implies that
for every $k$-critical $n$-vertex graph $H$,

\begin{equation}\label{ky} { \frac{e(H)-1}{v(H)-2}\geq
	\frac{(k+1)(k-2)n-k(k-3)-2(k-1)}{2(k-1)(n-2)}>\frac{(k+1)(k-2)}{2(k-1)}.} \end{equation} Therefore for any $H$ with $\chi(H)=k$ one has $$m_2(H)>\frac{(k+1)(k-2)}{2(k-1)}.$$  This \color{black} implies \color{black} that Theorem \ref{thm:m_2InEasyOrder} 
covers any graph $H$ for which $\chi(H)\geq m+2$, since
$m_2(K_m)=\frac{m+1}{2}$.

When $\chi(H)=m+1$ the situation is more complicated. Before
investigating the function $ex(G(n,p),K_m,H)$ for these graphs we show
that the case $m_2(H)<m_2(K_m)$ and $\chi(H)=m+1$ is not redundant. To
do so we prove the following theorem, which may be of independent
interest. The theorem strengthens the result in \cite{ABGKM} for $m=3$,
expands it to any $m$, and by \cite{KY} 
it is tight. \begin{theorem} \label{thm:existOfH} For every fixed $k\geq
	4$ and $\epsilon>0$ there exist infinitely many $k$-chromatic graphs
	$G(k,\epsilon)$ with $$m_2(G(k,\epsilon))\leq
	(1+\epsilon)\frac{(k+1)(k-2)}{2(k-1)}.$$ \end{theorem}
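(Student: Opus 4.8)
The plan is to construct $G(k,\epsilon)$ explicitly as a carefully chosen blow-up (or near-blow-up) of a small dense $k$-critical graph, using the fact that the extreme ratio $\frac{(k+1)(k-2)}{2(k-1)}$ is achieved in the limit by $k$-critical graphs built by the Haj\'os/Dirac-type construction and its iterates. Concretely, I would start from the family of $k$-critical graphs that attain (or nearly attain) the Kostochka--Yancey bound from \cite{KY} with equality --- these are known to be certain iterated compositions of $K_k$'s glued along edges (the so-called $k$-Ore graphs) --- and observe that such a graph on $n$ vertices has exactly $e(H) = \frac{(k+1)(k-2)n - k(k-3)}{2(k-1)}$ edges, so its global density $\frac{e(H)-1}{v(H)-2}$ tends to $\frac{(k+1)(k-2)}{2(k-1)}$ as $n\to\infty$. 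The real content is therefore not the global density but controlling $m_2$, i.e. the density of \emph{every} subgraph, since a priori a dense small piece could spoil the bound.

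The key steps, in order, are as follows. First, fix $k$ and take $G$ to be a $k$-Ore graph (an iterated DHGO-composition of copies of $K_k$) on $n$ vertices, where $n$ is large; by construction $G$ is $k$-critical, so $\chi(G)=k$, handling the chromatic-number requirement. Second, compute $e(G)$ exactly from the composition recursion: each composition step adds a fixed number of vertices and a fixed number of edges determined only by $k$, giving a linear formula $e(G) = \alpha_k n + \beta_k$ with $\alpha_k = \frac{(k+1)(k-2)}{2(k-1)}$. Third --- the heart of the argument --- bound $m_2(G)$ from above: I would argue that for any subgraph $G'\subseteq G$ with $v(G')\ge 3$, either $G'$ is small (contained in a bounded-size piece of the composition, in which case $\frac{e(G')-1}{v(G')-2}$ is at worst $m_2(K_k) = \frac{k+1}{2}$, a constant, and one absorbs this into the $(1+\epsilon)$ slack by taking $n$ large enough relative to $1/\epsilon$... wait, that does not work since $m_2$ is a max, not an average). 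So instead the third step must be a genuine structural induction on the composition: show by induction on the number of composition steps that \emph{every} subgraph $G'$ of $G$ satisfies $e(G') \le \alpha_k\, v(G') + \gamma_k$ for an appropriate constant $\gamma_k$ (this is the subgraph version of the global edge count, and it is exactly the kind of statement proved in the Kostochka--Yancey analysis and its precursors). Given such a uniform linear bound on all subgraphs, one gets $\frac{e(G')-1}{v(G')-2} \le \alpha_k + \frac{\gamma_k - 1 + 2\alpha_k}{v(G')-2}$, which for $v(G')$ bounded away from a constant is $\le (1+\epsilon)\alpha_k$; the finitely many small subgraphs with $v(G')$ below that threshold are handled by choosing the composition so that no small subgraph is too dense (the $k$-Ore graphs have the property that every proper subgraph is $(k-1)$-colorable and the densest small subgraph is a copy of $K_k$, whose contribution $\frac{\binom{k}{2}-1}{k-2} = \frac{k+1}{2}\cdot\frac{k}{k}$... so actually $m_2(K_k) = \frac{\binom k2 - 1}{k-2} = \frac{(k-1)(k+1)}{2(k-1)} = \frac{k+1}{2} > (1+\epsilon)\alpha_k$ for small $\epsilon$, meaning small subgraphs are the obstruction). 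To defeat that, the fourth step is to \emph{not} allow $K_k$ itself as a subgraph beyond what is forced: replace each copy of $K_k$ in the composition by a long subdivision-like gadget --- e.g. take $k$-critical graphs of large girth-type sparseness in the "critical core" --- or, following \cite{ABGKM} for $m=3$, use the known constructions of $k$-critical graphs with \emph{no small dense subgraph} (bounded clique number, or locally sparse), whose edge count still realizes $\alpha_k n + o(n)$. So the construction I would actually use is: take the sparsest-known $k$-critical graphs (those meeting Kostochka--Yancey with equality that additionally are $K_k$-free for $k$ large, or for small $k$ the ad hoc gadgets), and for the remaining bounded-size exceptional subgraphs verify the $(1+\epsilon)$ bound by hand.

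I would then assemble the proof: pick the threshold $v_0 = v_0(k,\epsilon)$ so that $\alpha_k + \frac{C_k}{v_0 - 2} \le (1+\epsilon)\alpha_k$, take a $k$-critical graph $G$ from the sparse family on $n \gg v_0$ vertices, verify $\chi(G)=k$ by criticality, verify via the structural induction that every subgraph on $\ge v_0$ vertices has density $\le (1+\epsilon)\alpha_k$, and verify the finitely many subgraphs on $< v_0$ vertices directly (here using that the construction has no $K_k$ or other dense small piece, so their density is also $\le (1+\epsilon)\alpha_k$, possibly after increasing $v_0$ or tweaking the gadget). Since the family is infinite (one graph for each large $n$, or each number of composition steps), we get infinitely many such $G(k,\epsilon)$, as required. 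The main obstacle is precisely the third/fourth steps combined: proving a \emph{uniform} linear upper bound $e(G')\le \alpha_k v(G') + \gamma_k$ valid for \emph{all} subgraphs simultaneously, while ensuring the small dense subgraphs (copies of $K_k$ and near-copies) either do not occur or do not violate the $(1+\epsilon)$ slack --- this is where the construction must be cleverly chosen rather than taken off the shelf, and it is the part that genuinely requires the Kostochka--Yancey machinery (for the linear bound) together with a separate sparseness gadget (for the small-subgraph exceptions). Everything else --- the chromatic number, the global edge count, the passage from the linear subgraph bound to the $m_2$ bound --- is routine bookkeeping.
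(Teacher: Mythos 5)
You diagnose the central obstruction correctly: $m_2$ is a maximum over subgraphs, so no small dense piece can be tolerated, and in particular any copy of $K_k$, $K_{k-1}$, or even $K_k$ minus an edge is already too dense (one checks $\frac{\binom{k}{2}-2}{k-2}=\frac{k^2-k-4}{2(k-2)}>\frac{(k+1)(k-2)}{2(k-1)}$ for all $k\geq 4$). But this diagnosis is where the proposal stops rather than starts. The $k$-Ore graphs you propose as a base family are built by iteratively gluing copies of $K_k$ along edges and therefore contain $K_k$ minus an edge as a subgraph, so they fail the very constraint you identified. Your fallback of replacing the $K_k$-pieces by unspecified ``long subdivision-like gadgets,'' or of invoking ``the sparsest-known $k$-critical graphs that additionally are $K_k$-free,'' names no actual family: the one reference offered, \cite{ABGKM}, treats only $k=4$, and you yourself flag the uniform subgraph bound $e(G')\leq \alpha_k v(G')+\gamma_k$ together with the exclusion of dense small pieces as ``the main obstacle'' that ``genuinely requires\dots a separate sparseness gadget.'' That obstacle is essentially the whole theorem, and the proposal leaves it open.

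The paper supplies exactly the missing construction, from scratch. It builds a $(k,t)$-tower: a two-vertex base with $t$ stacked levels each inducing $K_{k-1}$ minus an edge, with a sparse, carefully chosen inter-level adjacency ensuring $\omega(T_{k,t})=k-2$. The uniform subgraph-density bound is then proved directly by an induction on the potential $\rho_{k,T}(A)=(k+1)(k-2)|A|-2(k-1)|E(T[A])|$, showing $\rho_{k,T}(A)\geq 2(k+1)(k-2)-2(k-1)$ for \emph{every} $A$ (Lemma \ref{cl1}); this is a genuine case analysis, not a black-box import from \cite{KY}. Then $k$ towers are joined at a common base (Lemma \ref{cl2}), a bipartite bridge graph is added at the top to force the two base vertices apart in any $(k-1)$-coloring (Lemmas \ref{lem2}, \ref{lem2'}), and each edge of $K_k$ is replaced by such a supercomplex; the $(1+\epsilon)$ slack is consumed precisely by the $\binom{k}{2}$ bridge edges per supercomplex, amortized over the tower height $t=\lceil k^3/\epsilon\rceil$ (Lemmas \ref{cl3}, \ref{cl4}). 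None of these ingredients --- the tower, the clique-number control, the bridge, the potential-function induction --- appear in your outline, which remains a description of what would be needed rather than a proof.
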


This theorem shows that there are infinitely many $m+1$ chromatic graphs
$H$ with $m_2(H)<m_2(K_m)$. \color{black} For these graphs there are
three regions of interest for the value of $p$: $p$ much bigger than
$n^{-1/m_2(K_m)}$, $p$ much smaller than $n^{-1/m_2(H)}${,} and $p$ in
the middle range.\color{black}

One might suspect that as before the function $ex(G(n,p),K_m,H)$ will
change its behavior at $p=n^{-1/m_2(H)}$ but this is no longer the case.
We prove that for some graphs $H$ when $p$ is slightly bigger than
$n^{-1/m_2(H)}$ we can still take w.h.p.\ an $H$-free subgraph of
$G(n,p)$ that contains most of the copies of $K_m$:

\begin{theorem} \label{thm:pAlmostM_2(H)} Let $H$ be a graph such that
	$\chi(H)=m+1\geq 4$, $m_2(H)<c$ for some $c<m_2(K_m)$ and there exists
	$H_0\subseteq H$ for which $\frac{e(H_0)-1}{v(H_0)-2}=m_2(H)$ and
	$v(H_0)>M(m,c)$ \color{black} where $M(m,c)$ is \color{black} large
	enough. If $p\leq n^{-\frac{1}{m_2(H)}+\delta}$ for
	$\delta:=\delta(m,c)>0$ small enough  and $\binom{n}{m}p^{\binom{m}{2}}$
	tends to infinity as $n$ tends to infinity{,} then w.h.p.\
	$$ex(G(n,p),K_m,H)=(1+o(1))\binom{n}{m} p^{\binom{m}{2}}.$$
\end{theorem}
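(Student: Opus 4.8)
The plan is to establish matching upper and lower bounds; the upper bound is routine, and the lower bound rests on a first--moment estimate for an auxiliary ``extended'' configuration.

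\emph{Upper bound.} Let $X$ be the number of copies of $K_m$ in $G\sim G(n,p)$. Then $\Exp X=\binom{n}{m}p^{\binom{m}{2}}\to\infty$, and this hypothesis also forces $n^{v(J)}p^{e(J)}\to\infty$ for every subgraph $J\subseteq K_m$ with $e(J)\ge1$ (the density $e(J)/v(J)$ being largest for $J=K_m$). Hence, by the standard second--moment computation, $X=(1+o(1))\Exp X$ w.h.p., and since every $H$-free subgraph of $G$ has at most $X$ copies of $K_m$, this gives $ex(G(n,p),K_m,H)\le(1+o(1))\binom{n}{m}p^{\binom{m}{2}}$ w.h.p.

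\emph{Lower bound.} Let $E^{\star}$ be the set of edges of $G$ lying in at least one copy of $K_m$ and put $G^{\star}=(V(G),E^{\star})$. Every copy of $K_m$ in $G$ is a subgraph of $G^{\star}$, so w.h.p.\ $G^{\star}$ has $(1+o(1))\binom{n}{m}p^{\binom{m}{2}}$ copies of $K_m$; it therefore suffices to prove that w.h.p.\ $G^{\star}$ is $H$-free, and since $H_0\subseteq H$ it is enough to prove that w.h.p.\ $G^{\star}$ contains no copy of $H_0$ --- that is, that w.h.p.\ no copy of $H_0$ in $G$ has \emph{all} of its edges contained in copies of $K_m$. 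We shall use that $m_2(H_0)=m_2(H)$ (since $\tfrac{e(H_0)-1}{v(H_0)-2}=m_2(H)$ and $H_0\subseteq H$), hence that $H_0$ is $2$-balanced, that $e(H_0)=m_2(H)\bigl(v(H_0)-2\bigr)+1$, and that $H_0$ is $K_m$-free (because $m_2(H_0)<c<m_2(K_m)$).

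Suppose $G$ contains a copy of $H_0$ each of whose edges lies in a copy of $K_m$, and for each edge $e$ of this copy fix one such $K_m$ with vertex set $Q_e$. Let $F\subseteq G$ be the union of the copy of $H_0$ with all the cliques $Q_e$. Then $F\supseteq H_0$, $V(F)=V(H_0)\cup S$ with $S$ disjoint from the copy of $H_0$, $|V(F)|\le v(H_0)+(m-2)e(H_0)$, and in $F$ every edge of $H_0$ lies in a copy of $K_m$. The crucial claim is that there is a constant $\gamma=\gamma(m,c)>0$ such that $e(F)\ge\bigl(m_2(H)+\gamma\bigr)v(F)$ for every graph $F$ arising this way, provided $v(H_0)>M(m,c)$ with $M(m,c)$ large enough. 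Granting it, the expected number of copies of any fixed such $F$ in $G$ is $O\bigl(n^{v(F)}p^{e(F)}\bigr)=O\bigl(n^{v(F)-e(F)/m_2(H)+\delta e(F)}\bigr)=O\bigl(n^{-\gamma v(F)/m_2(H)+\delta e(F)}\bigr)$, and since $e(F)=O_m(v(F))$ this tends to $0$ once $\delta=\delta(m,c)>0$ is small enough. There are only boundedly many isomorphism types of such $F$, each with $O(n^{v(F)})$ labelled copies, so a union bound shows that w.h.p.\ no such $F$ occurs in $G$; thus w.h.p.\ $G^{\star}$ contains no copy of $H_0$, hence is $H$-free, which together with the previous paragraph yields $ex(G(n,p),K_m,H)\ge(1-o(1))\binom{n}{m}p^{\binom{m}{2}}$ w.h.p.

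\emph{On the crucial claim and the main obstacle.} Since $e(H_0)=m_2(H)(v(H_0)-2)+1$, the graph $H_0$ alone lies slightly below the line $e=m_2(H)v$, so the cliques $Q_e$ must supply a surplus linear in $v(F)$. Write $v(F)=v(H_0)+s$. If $Q_e$ uses $j\ge1$ vertices of $S$, it introduces at least $\binom{m}{2}-\binom{m-j}{2}=\tfrac{(2m-1-j)j}{2}\ge m_2(K_m)\,j$ edges incident to those vertices, and $m_2(K_m)>m_2(H)$; so if $s$ is at least a fixed fraction of $v(H_0)$ the surplus is linear and the claim follows. If $s$ is small, then essentially all $Q_e$ lie inside $V(H_0)$, so $F[V(H_0)]$ is $H_0$ together with an extra edge set $N$ for which every edge of $H_0$ lies in a copy of $K_m$ of $H_0\cup N$; one then argues $|N|=\Omega(v(H_0))$ by a charging argument, using that $H_0$ is $K_m$-free (so each such $K_m$ must use an edge of $N$) and that $H_0$ is $2$-balanced with $m_2(H_0)<c<m_2(K_m)$ (which bounds how many edges of $H_0$ can sit inside the neighbourhood of a bounded set of vertices, and hence how many edges of $H_0$ a single edge of $N$ can ``cover''). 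The main obstacle is to make this dichotomy quantitatively tight enough that the gain $\gamma v(F)$ outweighs the factor $n^{\delta e(F)}$ with $\delta$ depending only on $m$ and $c$; this is exactly where $v(H_0)>M(m,c)$ is needed --- so that the constant deficit $2m_2(H)-1$ and all bounded overlaps become negligible beside $v(H_0)$ --- while $m_2(H)<c<m_2(K_m)$ keeps the relevant gaps $m_2(K_m)-m_2(H)$ and $\tfrac{1}{m_2(H)}-\tfrac{1}{m_2(K_m)}$ bounded away from $0$, producing admissible $\gamma(m,c)$ and $\delta(m,c)$.
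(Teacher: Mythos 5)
Your overall strategy — matching a routine upper bound with a first-moment lower bound on a bounded family of "extended configurations" $H_0$ plus cliques covering its edges — is the same as the paper's, and your key structural observations (that $H_0$ is $2$-balanced, $K_m$-free, and that $v(H_0)>M(m,c)$ is needed to absorb the constant deficit $2m_2(H)-1$) all match. The essential difference, and where a genuine gap opens, is that you pass directly to $G^\star$ (edges in \emph{at least one} copy of $K_m$) without the paper's preliminary cleaning step. The paper first deletes every pair of copies of $K_m$ that share an edge (which by Lemma~\ref{lem:oneK_mPerEdge} destroys only $o(n^m p^{\binom{m}{2}})$ copies), so in its graph $G'$ every surviving edge lies in a \emph{unique} $K_m$; consequently the resulting extended configuration is an \emph{edge-disjoint} union of $K_m$'s, and the entire density count rests on this disjointness.

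Because you skip this step, your cliques $Q_e$ may overlap arbitrarily, and both branches of your dichotomy then leak. In the "$s$ large" branch, summing the quantity $\binom{m}{2}-\binom{m-j}{2}\ge m_2(K_m)\,j$ over the cliques $Q_e$ double-counts the new edges whenever two cliques share an external vertex; what one can guarantee unconditionally is only that each $v\in S$ has $\deg_F(v)\ge m-1$, giving about $\tfrac{m-1}{2}$ new edges per external vertex — and by \eqref{ky} with $k=m+1$ one has $m_2(H)>\tfrac{(m+2)(m-1)}{2m}>\tfrac{m-1}{2}$, so this is \emph{not} enough surplus. In the "$s$ small" branch, "essentially all $Q_e$ lie inside $V(H_0)$" does not follow: many $Q_e$ can reuse the same small set of external vertices, so $F[V(H_0)]$ need not contain a clique through every edge of $H_0$, and the charging argument for $|N|=\Omega(v(H_0))$ is not justified. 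In short, the crucial density claim is stated for a strictly larger family of configurations than the paper's $\mathcal{H}_m$ (which consists of edge-disjoint clique unions), and the sketch does not establish it for that larger family. The fix is exactly the paper's: first remove all $K_m$'s sharing edges (invoking Lemma~\ref{lem:oneK_mPerEdge} to see this is affordable), after which your $Q_e$'s become edge-disjoint and the surplus-per-external-vertex and the edge-in-$E_1$ charging both go through cleanly.
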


\color{black}On the other hand, we prove that for big  enough
\color{black} values of $p$ one cannot find an $H$-free subgraph of
$G(n,p)$ with $(1+o(1))\binom{n}{m} p^{\binom{m}{2}}$ copies of $K_m$
and it is asymptotically best to take a $k-1$-partite subgraph of
$G(n,p)$.

As an example we show that the theorem above can be applied to the
graphs constructed in Theorem \ref{thm:existOfH}.

\begin{lemma} \label{lem:bigV} For every two integers $k$ and $N$ there
	is $\epsilon>0$ small enough such that $v(G_0(k,\epsilon))>N$, where
	$G_0(k,\epsilon)$ is a subgraph of $G(k,\epsilon)$ for which
	$\frac{e(G_0(k,\epsilon))-1}{v(G_0(k,\epsilon))-2}=m_2(G(k,\epsilon))$.
\end{lemma}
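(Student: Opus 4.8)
The plan is to argue by contradiction using a finiteness observation together with inequality~\eqref{ky}. Write $\tau:=\frac{(k+1)(k-2)}{2(k-1)}$. Since the subgraph $G_0$ attaining $m_2$ in the definition of $m_2(G(k,\epsilon))$ always has $v(G_0)\ge 3$, the conclusion is automatic when $N\le 2$, so assume $N\ge 3$. The key point is that there are only finitely many graphs on at most $N$ vertices, so the set
$$S=\left\{\frac{e-1}{v-2}\ :\ 3\le v\le N,\ 0\le e\le \binom{v}{2}\right\}\cap\left(\tau,\,\infty\right)$$
is finite. If $S=\emptyset$, then no subgraph on at most $N$ vertices can have density exceeding $\tau$; since $\chi(G(k,\epsilon))=k$ forces $m_2(G(k,\epsilon))>\tau$ by~\eqref{ky}, the maximizer $G_0$ must then have more than $N$ vertices for every $\epsilon$, and we are done. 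Otherwise set $\mu_0:=\min S$; this is a constant depending only on $k$ and $N$, and $\mu_0>\tau$. Now choose $\epsilon>0$ small enough that $(1+\epsilon)\tau<\mu_0$, which is possible precisely because $\mu_0/\tau>1$.

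Fix such an $\epsilon$ and let $G=G(k,\epsilon)$ be any graph supplied by Theorem~\ref{thm:existOfH}, with $G_0=G_0(k,\epsilon)\subseteq G$ a subgraph attaining $m_2(G)$, so $v(G_0)\ge 3$ and $\frac{e(G_0)-1}{v(G_0)-2}=m_2(G)$. By~\eqref{ky} (using $\chi(G)=k$) we have $m_2(G)>\tau$, while Theorem~\ref{thm:existOfH} gives $m_2(G)\le(1+\epsilon)\tau<\mu_0$. Suppose for contradiction that $v(G_0)\le N$. Then $\frac{e(G_0)-1}{v(G_0)-2}$ is a ratio of the form defining $S$, and since it exceeds $\tau$ it lies in $S$; hence $m_2(G)=\frac{e(G_0)-1}{v(G_0)-2}\ge\mu_0$, contradicting $m_2(G)<\mu_0$. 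Therefore $v(G_0(k,\epsilon))>N$, as claimed.

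There is no technically hard step here; the only things to be careful about are organizational. First, the order of quantifiers: $\mu_0$ is a purely combinatorial constant depending on $k$ and $N$ only and must be extracted \emph{before} $\epsilon$ is selected, so that $\epsilon$ can then be chosen small enough to push $(1+\epsilon)\tau$ below $\mu_0$; the bound $\eqref{ky}$ from \cite{KY} is exactly what guarantees the strict gap $\mu_0>\tau$ that makes this possible. Second, one must keep track of the degenerate cases ($N\le 2$, or $S=\emptyset$), which are handled at the outset by the remark that an $m_2$-maximizer has at least three vertices and, by~\eqref{ky}, density strictly above $\tau$. If desired, $S$ can be trimmed by restricting to ratios arising from connected graphs of minimum degree at least two, but this refinement is not needed for the argument.
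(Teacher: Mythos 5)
Your proof is correct, and it is essentially the same argument as the paper's, dressed slightly differently. Both proofs rest on the sandwich $\tau < m_2(G(k,\epsilon)) \le (1+\epsilon)\tau$, with the lower strict inequality from~\eqref{ky} and the upper from Theorem~\ref{thm:existOfH}, together with a discreteness observation about rationals with bounded denominator. Where you extract $\mu_0=\min S$ over the finite set of admissible densities on at most $N$ vertices and choose $\epsilon$ to squeeze $(1+\epsilon)\tau$ below $\mu_0$, the paper instead invokes the explicit Diophantine bound: if $\frac{a}{b}\ne\frac{p}{q}$ then $\lvert\frac{a}{b}-\frac{p}{q}\rvert\ge\frac{1}{bq}$, so $\lvert\tau-m_2(G)\rvert\le\epsilon\tau$ forces the denominator $v(G_0)-2$ to be at least $\frac{1}{2(k-1)\epsilon\tau}$. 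The paper's version yields a concrete lower bound on $v(G_0)$ in terms of $\epsilon$ and $k$ without any case split; your version needs the small detour through $S=\emptyset$ and $N\le 2$, but is otherwise equivalent. Either way the quantifier order (fix $k,N$, get the gap constant, then pick $\epsilon$) is handled correctly.
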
 %
%
%
%
%
The rest of the paper is organized as follows. In Section 2 we establish
some general results for $G(n,p)$. \color{black} In Section 3 we prove
Theorem \ref{thm:m_2InEasyOrder}. 
In Section 4 we describe the construction of sparse graphs with a given
chromatic number and prove Theorem \ref{thm:existOfH}. In Section 5 we
prove Theorem \ref{thm:pAlmostM_2(H)} and Lemma \ref{lem:bigV}.
\color{black} We finish with some concluding remarks and open problems
in Section 6.

\section{Auxiliary Results}

We need the following well known Chernoff bounds on the upper and lower
tails of the binomial distribution (see e.g. \cite{AS}, \cite{MR})
\begin{lemma} \label{lem:Chernoff} Let $X\sim Bin(n,p)$ then
	\begin{enumerate} \item
		$\mathbb{P}(X<(1-a)\mathbb{E}X)<e^{\frac{-a^2\mathbb{E}X}{2}}$ for
		$0<a<1$  \label{lem:ChernoffLower} \item
		$\mathbb{P}(X>(1+a)\mathbb{E}X)<e^{\frac{-a^2\mathbb{E}X}{3}}$ for
		$0<a<1$  \label{lem:ChernoffUppe} \item
		$\mathbb{P}(X>(1+a)\mathbb{E}X)<e^{\frac{-a\mathbb{E}X}{3}}$ for $a>1$
		\label{lem:ChernoffUppeBigA} \end{enumerate} \end{lemma}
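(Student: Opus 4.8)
The statement is the classical Chernoff--Bernstein tail inequality for a binomial, so the plan is to run the standard exponential-moment argument. Write $X=\sum_{i=1}^{n}X_i$ with $X_i$ independent Bernoulli$(p)$, put $\mu:=\mathbb{E}X=np$, and note that by independence together with $1+x\le e^{x}$,
\[
\mathbb{E}[e^{tX}]=\big(1+p(e^{t}-1)\big)^{n}\le e^{\mu(e^{t}-1)}\qquad\text{for every }t\in\mathbb{R}.
\]
First I would apply Markov's inequality to $e^{tX}$ for the upper-tail claims (parts (2) and (3)) and to $e^{-tX}$ for the lower-tail claim (part (1)), in each case optimizing the free parameter $t>0$.

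For the upper tail, $\mathbb{P}(X>(1+a)\mu)\le e^{-t(1+a)\mu}\,\mathbb{E}[e^{tX}]\le \exp\big(\mu(e^{t}-1-t(1+a))\big)$, and the minimizing choice $t=\ln(1+a)>0$ gives
\[
\mathbb{P}\big(X>(1+a)\mu\big)\le\Big(\frac{e^{a}}{(1+a)^{1+a}}\Big)^{\mu}.
\]
For the lower tail, when $0<a<1$ one has $\mathbb{P}(X<(1-a)\mu)\le e^{t(1-a)\mu}\,\mathbb{E}[e^{-tX}]\le\exp\big(\mu(e^{-t}-1+t(1-a))\big)$, and the minimizing choice $t=-\ln(1-a)>0$ gives
\[
\mathbb{P}\big(X<(1-a)\mu\big)\le\Big(\frac{e^{-a}}{(1-a)^{1-a}}\Big)^{\mu}.
\]

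It then remains only to pass from these sharp forms to the stated ones via three elementary one-variable inequalities: $(1+a)\ln(1+a)-a\ge a^{2}/3$ for $0<a<1$ (which gives (2)); $(1+a)\ln(1+a)-a\ge a/3$ for $a\ge 1$ (which gives (3)); and $-a-(1-a)\ln(1-a)\le -a^{2}/2$ for $0<a<1$ (which gives (1)). Each is a routine calculus exercise: the third follows immediately from the expansion $(1-a)\ln(1-a)=-a+\sum_{j\ge 2}\tfrac{a^{j}}{j(j-1)}\ge -a+\tfrac{a^{2}}{2}$, whose omitted terms are all nonnegative; the first two follow by differentiating, e.g.\ $g(a):=(1+a)\ln(1+a)-a-a^{2}/3$ has $g(0)=0$ and $g'(a)=\ln(1+a)-\tfrac23 a\ge 0$ on $(0,1)$, and analogously on $[1,\infty)$ for the second.

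There is no genuine obstacle in this argument; its entire content is the three elementary inequalities above, all completely standard. Since the lemma is textbook material, in the write-up it may alternatively simply be quoted from \cite{AS} or \cite{MR}, the short derivation being included only for self-containedness.
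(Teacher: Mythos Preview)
Your proof is correct and is precisely the standard exponential-moment (Bernstein--Chernoff) derivation; the three auxiliary inequalities are verified as you indicate. The paper, however, does not prove this lemma at all: it simply states it with a pointer to \cite{AS} and \cite{MR}. So there is nothing to compare---you have supplied a self-contained argument where the paper elects to quote, exactly as you anticipated in your final sentence.
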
 %

The following known result is used a few times \begin{theorem}[see,
	e.g., Theorem 4.4.5 in \cite{AS}] \label{lem:ExpectedNumLemma} Let $H$
	be a fixed graph. For every subgraph $H'$ of $H$ (including $H$ itself)
	let $X_{H'}$ denote the number of copies of $H'$ in $G(n,p)$. Assume $p$
	is such that $\mathbb{E} [X_{H'}]\to \infty$ for every $H'{\subseteq
		H}$. Then w.h.p.\ $$X_H=(1+o(1))\mathbb{E} [X_H].$$ \end{theorem}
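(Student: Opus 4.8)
The plan is to prove the statement by the standard second moment method. First I would expand $X_H = \sum_A \mathbf{1}\{A \subseteq G(n,p)\}$, the sum ranging over all labeled copies $A$ of $H$ inside $K_n$; by linearity of expectation $\mathbb{E}[X_H]$ equals the number of such copies times $p^{e(H)}$, hence $\mathbb{E}[X_H] = \Theta(n^{v(H)}p^{e(H)})$ with the implied constant depending only on $H$, and likewise $\mathbb{E}[X_{H'}] = \Theta(n^{v(H')}p^{e(H')})$ for every $H' \subseteq H$. Since $\mathbb{E}[X_H] \to \infty$ by hypothesis (take $H' = H$), it suffices, by Chebyshev's inequality, to prove that $\Var[X_H] = o\big((\mathbb{E}[X_H])^2\big)$.

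To estimate the variance I would write $\Var[X_H] = \sum_{A,B}\big(\mathbb{P}[A,B \subseteq G(n,p)] - \mathbb{P}[A\subseteq G(n,p)]\,\mathbb{P}[B\subseteq G(n,p)]\big)$, the sum over ordered pairs of copies of $H$, and observe that whenever $A$ and $B$ share no edge the two events are independent (even if the copies share vertices), so the corresponding summand is $0$. The only contributing pairs are therefore those whose intersection graph $J := A \cap B$ contains at least one edge, and for such a pair $\mathbb{P}[A,B \subseteq G(n,p)] = p^{2e(H)-e(J)}$. Grouping the pairs by the isomorphism type of $J$ — a subgraph of $H$ with $e(J) \ge 1$, of which there are only finitely many — the number of pairs of a given type is $O\big(n^{2v(H)-v(J)}\big)$, since $A$ may be chosen freely and then $B$ must contain a fixed copy of $J$.

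Combining these estimates, $\Var[X_H] \le \sum_{J} O\big(n^{2v(H)-v(J)}p^{2e(H)-e(J)}\big) = (\mathbb{E}[X_H])^2 \sum_{J} O\big(1/(n^{v(J)}p^{e(J)})\big) = (\mathbb{E}[X_H])^2\sum_J O\big(1/\mathbb{E}[X_J]\big)$, where $J$ runs over the finitely many subgraphs of $H$ with at least one edge. Each $\mathbb{E}[X_J]$ tends to infinity by hypothesis, so the last sum tends to $0$, giving $\Var[X_H] = o\big((\mathbb{E}[X_H])^2\big)$. Finally Chebyshev yields, for any fixed $\epsilon > 0$, $\mathbb{P}\big(|X_H - \mathbb{E}[X_H]| > \epsilon\,\mathbb{E}[X_H]\big) \le \Var[X_H]/\big(\epsilon^2(\mathbb{E}[X_H])^2\big) \to 0$, and choosing $\epsilon = \epsilon_n$ tending to $0$ sufficiently slowly gives $X_H = (1+o(1))\mathbb{E}[X_H]$ w.h.p.

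There is no genuine conceptual difficulty here — this is the textbook subgraph-counting second moment argument — so the only step requiring care is the variance computation itself: observing that edge-disjoint pairs drop out, enumerating the intersection types $J$, counting the pairs of each type, and tracking that all implied constants depend only on the fixed graph $H$, which is precisely why nothing about the growth of $p$ is needed beyond the hypothesis $\mathbb{E}[X_{H'}] \to \infty$ for all $H' \subseteq H$.
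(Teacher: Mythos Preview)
Your argument is correct and is exactly the standard second-moment computation; the paper itself gives no proof of this statement, merely citing it as Theorem~4.4.5 in Alon--Spencer, where the same variance calculation appears. There is nothing to compare.
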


In addition we prove technical lemmas to be used in Sections
\ref{sec:EasyOrder} and \ref{sec:DifOrder}. \color{black} From here on
for two graphs $G$ and $H$ we denote by $\mathcal{N}(G,H)$ the number of
copies of $H$ in  $G$.\color{black}

\begin{lemma} \label{lem:CopiesOnEdge} Let $G\sim G(n,p)$ with $p\gg
	n^{-1/m_2(K_m)}$ then w.h.p.\ \begin{enumerate} \item Every set of
		$o(pn^2)$ edges takes part in  $o\big(\mathcal{N}(G,K_m)\big)$ copies of
		$K_m$, \item For every $\epsilon>0$ small enough  every set of
		$n^{-\epsilon} pn^2$ edges takes part in at most
		$n^{-\epsilon/3}\mathcal{N}(G,K_m)$ copies of $K_m$. \end{enumerate}
\end{lemma}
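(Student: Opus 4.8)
The plan is to bound, for a fixed set $S$ of edges of the prescribed size, the number of copies of $K_m$ in $G(n,p)$ that use at least one edge of $S$, and then show this quantity is typically small compared to $\mathcal{N}(G,K_m) = (1+o(1))\binom{n}{m}p^{\binom{m}{2}}$ (the latter asymptotics coming from Theorem \ref{lem:ExpectedNumLemma}, applicable since $p \gg n^{-1/m_2(K_m)}$ forces $\mathbb{E}[X_{K_r}] \to \infty$ for every $2 \le r \le m$). The key observation is that a copy of $K_m$ through a fixed edge $\{u,v\}$ is determined by choosing $m-2$ further vertices forming, together with $u,v$, a clique; its expected count is $\Theta(n^{m-2}p^{\binom{m}{2}-1})$. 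Summing over the at most $|S|$ edges gives an expected number of ``bad'' $K_m$'s of order $|S|\, n^{m-2} p^{\binom{m}{2}-1}$. For part (1), with $|S| = o(pn^2)$ this is $o(n^m p^{\binom{m}{2}}) = o(\mathcal{N}(G,K_m))$; for part (2), with $|S| = n^{-\epsilon}pn^2$ it is $n^{-\epsilon}\cdot O(n^m p^{\binom{m}{2}})$, comfortably below the target $n^{-\epsilon/3}\mathcal{N}(G,K_m)$.

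The difficulty is that $S$ is not fixed in advance — it may be chosen adversarially \emph{after} exposing $G(n,p)$, so a first-moment bound for one fixed $S$ is not enough; one must control \emph{all} admissible sets $S$ simultaneously. The natural route is to reduce to a statement about individual edges: let $d_m(e)$ denote the number of copies of $K_m$ in $G$ containing the edge $e$, and let $Y = \sum_{e \in E(G)} d_m(e) = \binom{m}{2}\mathcal{N}(G,K_m)$. The worst choice of $S$ of a given size $s$ is to take the $s$ edges of largest $d_m$-value, so it suffices to show that w.h.p.\ the sum of the $s$ largest values $d_m(e)$ is $o(\mathcal{N}(G,K_m))$ when $s = o(pn^2)$, and at most $n^{-\epsilon/3}\mathcal{N}(G,K_m)$ when $s = n^{-\epsilon}pn^2$. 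I would establish this via a two-part estimate: first, a \textbf{uniform upper bound} $d_m(e) = O(n^{m-2}p^{\binom{m}{2}-1})$ holding w.h.p.\ simultaneously for all edges $e$ (this follows from a standard first-moment plus union bound over the $\binom{n}{2}$ edges, since the expected degree is polynomial in $n$ and $p$ is bounded below by a fixed negative power of $n$, making the relevant upper-tail Chernoff bound of Lemma \ref{lem:Chernoff}(\ref{lem:ChernoffUppeBigA}) beat the union bound — here one uses $p \gg n^{-1/m_2(K_m)}$ to guarantee the expectation is large enough); and second, the identity $Y = \binom{m}{2}\mathcal{N}(G,K_m) = (1+o(1))\binom{m}{2}\binom{n}{m}p^{\binom{m}{2}}$ w.h.p.

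Combining these, the sum of the $s$ largest $d_m(e)$ is at most $s \cdot \max_e d_m(e) = s \cdot O(n^{m-2}p^{\binom{m}{2}-1})$. For part (1), $s = o(pn^2)$ yields $o(n^m p^{\binom{m}{2}}) = o(\mathcal{N}(G,K_m))$, as needed. For part (2), $s = n^{-\epsilon}pn^2$ yields $O(n^{-\epsilon})\cdot n^m p^{\binom{m}{2}} = O(n^{-\epsilon})\mathcal{N}(G,K_m)$, which is at most $n^{-\epsilon/3}\mathcal{N}(G,K_m)$ for $n$ large once $\epsilon$ is fixed (indeed with room to spare, explaining the slack in the exponent). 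The main obstacle I anticipate is making the uniform bound $d_m(e) = O(n^{m-2}p^{\binom{m}{2}-1})$ genuinely hold for \emph{every} edge with probability $1-o(1)$: near the threshold $p \approx n^{-1/m_2(K_m)}$ the expected value of $d_m(e)$ can be as small as a fixed power $n^{\eta}$ of $n$, so the Chernoff upper-tail bound of the form $e^{-c\,\mathbb{E}}$ only gives probability $e^{-c n^{\eta}}$, which does beat the union-bound factor $\binom n2$ but leaves little margin; one has to be careful that $\eta$ stays bounded away from $0$ (which is exactly what $p \gg n^{-1/m_2(K_m)}$ buys, since then $\mathbb{E}[d_m(e)] = n^{m-2}p^{\binom m2 - 1} \to \infty$ polynomially). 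An alternative, perhaps cleaner, is to note that it suffices to bound $d_m(e)$ by, say, $n^{m-2}p^{\binom{m}{2}-1}\log n$ uniformly — any polylog slack is harmless in both conclusions — which makes the union bound entirely routine.
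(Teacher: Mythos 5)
Your reduction to a per-edge bound $d_m(e)=O(n^{m-2}p^{\binom m2-1})$ holding simultaneously for all edges is where the argument breaks, and the two worries you raise yourself are both fatal rather than merely technical. First, for $m\geq 4$ the random variable $d_m(e)$ (conditional on $e$ being present) is \emph{not} a sum of independent indicators: the copies of $K_m$ through $e$ overlap in further edges, so it is a genuine subgraph count and the Chernoff bounds of Lemma~\ref{lem:Chernoff} simply do not apply to it. (They do for $m=3$, where $d_3(e)\sim\mathrm{Bin}(n-2,p^2)$, but that is the only case.) Second, even in the binomial case the claim that $p\gg n^{-1/m_2(K_m)}$ makes $\mathbb E[d_m(e)]=n^{m-2}p^{\binom m2-1}$ grow \emph{polynomially} in $n$ is false: the hypothesis only says $np^{(m+1)/2}\to\infty$, so $\mathbb E[d_m(e)]$ can grow as slowly as, say, $(\log\log n)^{m-2}$. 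Then a Chernoff-with-union-bound over $\binom n2$ edges requires $\mathbb E[d_m(e)]\gtrsim\log n$, which is not available; and the polylog slack you propose gives at best $d_m(e)\le \mathbb E[d_m(e)]+O(\log n)$ for all $e$, which after multiplying by $|S|$ produces $|S|\cdot O(\log n)$ — this is \emph{not} $o(\mathcal N(G,K_m))=\Theta(|E(G)|\cdot\mathbb E[d_m(e)])$ once $\mathbb E[d_m(e)]\ll\log n$. So the uniform maximum-degree route cannot close the gap near the threshold.

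The paper sidesteps per-edge concentration entirely. It lets $X$ be the number of $K_m$'s through a uniformly random edge of $G$, computes the second moment directly (summing over pairs of cliques through a common edge sharing $k$ vertices) and uses $p\gg n^{-1/m_2(K_m)}\geq n^{-1/m_2(K_k)}$ to show $\mathbb E[X^2]=O(\mathbb E[X]^2)$. Then, if some set $E_0$ of $o(n^2p)$ (resp.\ $n^{-\epsilon}n^2p$) edges carried too many copies of $K_m$, conditioning on $e\in E_0$ and applying Jensen's inequality would force $\mathbb E[X^2]=\omega(\mathbb E[X]^2)$, a contradiction. This is a purely second-moment argument: it never needs any individual $d_m(e)$ to be concentrated, only that the empirical distribution of $d_m(e)$ over edges is not too spread out, which is exactly what the variance bound delivers. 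If you want to salvage your approach you would have to replace Chernoff with an upper-tail inequality valid for subgraph counts (Kim--Vu, or DeMarco--Kahn for cliques), and you would still need to handle the regime where $\mathbb E[d_m(e)]$ is sub-logarithmic; the paper's second-moment argument is considerably lighter.
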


\begin{proof} Let $G\sim G(n,p)$ and  let $X$  be the random variable
	counting the number of copies of $K_m$ on a randomly chosen edge of
	$G(n,p)$. First we show that $\E[X^2]\leq O(\E^2[X])$. Given an edge let
	$\{A_1,...A_{l}\}$ be all the possible copies of $K_m$ using this edge
	in $K_n$ and let $|A_i\cap A_j|$ be the number of vertices the copies
	share. Let $X_{A_i}$ be the indicator of the event $A_i\subset G$. Then
	$X=\sum X_{A_i}$ and we get that \begin{align*}
	\E^2[X]&=\color{black}(\sum \E
	[X_{A_i}])^2\color{black}=\Theta([n^{m-2}p^{\binom{m}{2}-1}]^2)\\
	\E[X^2]&=\E[\sum_{k=2}^{m} \sum_{|A_i\cap A_j|=k}X_{A_i}X_{A_j}]\\ &\leq
	\sum^{m}_{k=2} n^{2m-k-2}p^{\binom{m}{2}+\binom{m-k}{2}+(m-k)k-1}
	\end{align*}
	
	\color{black}Put \color{black}
	$S_k=n^{2m-k-2}p^{\binom{m}{2}+\binom{m-k}{2}+(m-k)k-1}$ and note that
	\color{black}$S_2=\Theta(\E^2[X])$\color{black}. Furthermore, for any
	$2< k \leq m$ the following holds
	$S_2/S_k=n^{k-2}p^{\binom{k}{2}-1}\overset{n\to
		\infty}{\longrightarrow}\infty$ as $p\gg n^{-1/m_2(K_m)}\geq
	n^{-1/m_2(K_k)}$ and from this \begin{equation}\label{eq:expSq}
	\E[X^2]\leq O(\E^2[X]). \end{equation} \color{black}(Note that in fact
	$\E[X^2]=(1+o(1))\E^2[X]$ but the above estimate suffices for our
	purpose here)\color{black}
	
	Let $M=\mathcal{N}(G,K_m)$. 
	To prove the first part assume towards a contradiction that there is a
	set of edges, $E_0\subseteq E(G)$, which is of size $o( n^2 p)$ and
	that there exists $c>0$ such that there are $cM$ copies of $K_m$
	\color{black} containing \color{black} at least one edge from it.
	
	On one hand,  $\E^2[X]=[M\binom{m}{2}\frac{1}{e(G)}]^2$. On the other
	hand \color{black} by Jensen's inequality \color{black}
	\begin{equation*} \E[X^2]\geq \E[X^2 \mid e\in E_0]\mathbb{P}[e\in
	E_0]\color{black}\geq \color{black}\Big(\frac{cM}{|E_0|}\Big)^2
	\cdot\frac{|E_0|}{e(G)}=\Big(\frac{M\binom{m}{2}}{e(G) } \Big)^2
	\frac{c^2}{\binom{m}{2}^2}\frac{e(G)}{|E_0|}=\omega(\E^2[X])
	\end{equation*} \color{black}where \color{black} the last equality
	\color{black} holds \color{black} as $|E_0|=o(e(G))$. This is a
	contradiction to (\ref{eq:expSq}) and so the first part of the Lemma
	holds.
	
	For the second part assume there is a set $E_0$ such that
	$|E_0|=n^{-\epsilon}pn^2$ and the set of copies of $K_m$ using
	\color{black} edges \color{black} of $E_0$ is of size at least
	$n^{-\epsilon/3}M$. Note that w.h.p.\ $e(G)\geq\frac{1}{4}n^2 p$.
	Repeating the calculation above we get that \begin{equation*}
	\E[X^2]\geq\E[X^2 \mid e\in E_0]\mathbb{P}[e\in
	E_0]=(\frac{n^{-\epsilon/3}M}{n^{-\epsilon} e(G)})^2\cdot
	\frac{n^{-\epsilon}}{4}=\frac{M^2}{e(G)^2}\frac
	{n^{\epsilon/3}}{4}=\omega(\E^2[X]) \end{equation*} which is again a
	contradiction, and thus the second part of the lemma
	\color{black}holds.\color{black} \end{proof}

\color{black} \color{black} \begin{lemma} \label{lem:oneK_mPerEdge} Let
	$G\sim G(n,p)$ for $p=n^{-a}$ with $-a<-1/m_2(K_m)$. Then w.h.p.\ the
	number of copies of $K_m$ sharing an edge with other copies of $K_m$ is
	$o(n^mp^{\binom{m}{2}})$. \end{lemma}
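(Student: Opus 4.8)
The plan is a routine first moment estimate. Call a copy of $K_m$ in $G$ \emph{bad} if it shares an edge with some other copy of $K_m$ in $G$, and let $B$ be the number of bad copies. Each bad copy is the first coordinate of at least one ordered pair $(A,A')$ of distinct copies of $K_m$ in $G$ that share an edge (pick any such $A'$ as a witness), so $B\le P$, where $P$ counts all such ordered pairs. It thus suffices to show $\E[P]=o\big(n^m p^{\binom{m}{2}}\big)$ and then pass to a high probability statement by Markov's inequality.

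To bound $\E[P]$, I would split the ordered pairs $(A,A')$ according to the number $k:=|V(A)\cap V(A')|$ of shared vertices. Since $A,A'$ are distinct and share an edge, $2\le k\le m-1$. For each fixed $k$ there are $\Theta\big(n^{2m-k}\big)$ ways to embed such a pair into $[n]$, and the graph $A\cup A'$ has exactly $2\binom{m}{2}-\binom{k}{2}$ edges, since the $\binom{k}{2}$ edges spanned by $V(A)\cap V(A')$ belong to both copies. Hence the expected number of pairs with parameter $k$ is $\Theta\big(n^{2m-k}p^{\,2\binom{m}{2}-\binom{k}{2}}\big)$, and summing over the finitely many admissible $k$ gives $\E[P]=\sum_{k=2}^{m-1}\Theta\big(n^{2m-k}p^{\,2\binom{m}{2}-\binom{k}{2}}\big)$.

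The key step is to verify that each summand is $o\big(n^m p^{\binom{m}{2}}\big)$, i.e.\ that $n^{m-k}p^{\binom{m}{2}-\binom{k}{2}}\to 0$ for every $k$ with $2\le k\le m-1$. Using the identity $\binom{m}{2}-\binom{k}{2}=\tfrac{1}{2}(m-k)(m+k-1)$ and $p=n^{-a}$, this quantity equals $n^{(m-k)\left(1-\frac{a}{2}(m+k-1)\right)}$, which tends to $0$ exactly when $a>\frac{2}{m+k-1}$. Since $k\ge 2$, we have $\frac{2}{m+k-1}\le\frac{2}{m+1}=\frac{1}{m_2(K_m)}<a$, using $m_2(K_m)=\frac{m+1}{2}$ and the hypothesis $-a<-1/m_2(K_m)$; so every summand is negligible and $\E[P]=o\big(n^m p^{\binom{m}{2}}\big)$.

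Finally, set $g(n):=\sqrt{\E[P]\cdot n^m p^{\binom{m}{2}}}$, so that $g(n)=o\big(n^m p^{\binom{m}{2}}\big)$; Markov's inequality gives $\mathbb{P}\big[P>g(n)\big]\le \E[P]/g(n)=\sqrt{\E[P]/\big(n^m p^{\binom{m}{2}}\big)}\to 0$, hence w.h.p.\ $B\le P\le g(n)=o\big(n^m p^{\binom{m}{2}}\big)$. I do not expect a genuine obstacle: the only delicate point is the exponent bookkeeping, which collapses to the single inequality $\frac{2}{m+k-1}<a$, tightest in the case $k=2$ where it is precisely the hypothesis $a>1/m_2(K_m)$.
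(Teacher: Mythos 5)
Your proof is correct and follows essentially the same route as the paper: a first-moment count of pairs of $K_m$-copies sharing at least two vertices, stratified by intersection size $k$, with the critical exponent reducing to $a>\frac{2}{m+k-1}$, tightest at $k=2$ where it is exactly $a>1/m_2(K_m)$, followed by Markov's inequality. The only cosmetic difference is algebraic bookkeeping (you write the edge count as $2\binom{m}{2}-\binom{k}{2}$ while the paper factors out $n^m p^{\binom{m}{2}}\cdot(np^{(m+k-1)/2})^{m-k}$), but the underlying estimate is identical.
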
 \color{black}

\begin{proof} First note that
	$n^{m-2}p^{\binom{m}{2}-1}=(np^{(m+1)/2})^{m-2}=n^{-\alpha(m-2)}$ for
	some $\alpha>0$. The expected number of \color{black} pairs of
	\color{black} copies of $K_m$ sharing $a$ vertices, where $m-1\geq
	a\geq2$ is at most
	
	\begin{align*}
	n^{2m-a}p^{\binom{m}{2}+\binom{m-a}{2}+(m-a)a}&=n^mp^{\binom{m}{2}}\cdot (np^{\frac{m+a-1}{2}})^{(m-a)}\\ &<n^mp^{\binom{m}{2}} np^{\frac{m+1}{2}}\\ &=n^mp^{\binom{m}{2}}n^{-\alpha}. \end{align*} Here we used the fact that $np^{\frac{m+1}{2}}<1$ and $p<1$.
	
	Using Markov's inequality we get that the probability that $G$ has more
	than $\color{black}2\color{black}n^mp^{\binom{m}{2}}n^{-\alpha/2}$
	copies of $K_m$ sharing an edge is no more than $n^{-\alpha/2}$.
\end{proof}

\color{black} \section{Proof of Theorem 1.2} \label{sec:EasyOrder}
\color{black}

To prove Theorem \ref{thm:m_2InEasyOrder}{,}  we prove three lemmas for
three ranges of values of $p$ using different approaches. \color{black}
Lemmas \ref{lem:BigValuesOfp} and \ref{lem:SmallValuesOfp} are stated in
a more general form as they are also used in Section \ref{sec:DifOrder}.
An explanation on how the lemmas prove Theorem \ref{thm:m_2InEasyOrder}
follows after the statements. \color{black}


\begin{lemma}\label{lem:BigValuesOfp} Let $H$ be a fixed graph with
	$\chi(H)=k>m$ and let \color{black} $p\gg  \max\{n^{-\frac{1}{m_2(H)}},
	n^{-\frac{1}{m_2(K_m)}}\}$ \color{black}.  Then
	$$ex(G(n,p),K_m,H)=(1+o(1))\binom{k-1}{m}\Big(\frac{n}{k-1}\Big)^m
	p^{\binom{m}{2}}.$$ \end{lemma}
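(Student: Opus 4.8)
\emph{Proof plan.} This is an equality, so there are two directions. For the lower bound I would use the standard construction: split $V(G(n,p))$ into $k-1$ classes $U_1,\dots,U_{k-1}$ of size $n/(k-1)$ and let $G'$ be the spanning subgraph of all edges of $G(n,p)$ joining distinct classes; then $\chi(G')\le k-1<\chi(H)$, so $G'$ is $H$-free, and its copies of $K_m$ are exactly the copies of $K_m$ in $G(n,p)$ with all $m$ vertices in distinct classes. There are $\binom{k-1}{m}(n/(k-1))^m$ candidate $m$-sets, each a clique with probability $p^{\binom m2}$; since $p\gg n^{-1/m_2(K_m)}$ forces $n^{j}p^{\binom j2}\to\infty$ for every $2\le j\le m$, a second moment computation (the multipartite analogue of Theorem~\ref{lem:ExpectedNumLemma}) shows this count is w.h.p.\ $(1+o(1))\binom{k-1}{m}(n/(k-1))^m p^{\binom m2}$, giving the lower bound. (For $m=2$ the whole lemma is just Theorem~\ref{thm:RegRandTur} rewritten, since $\binom{k-1}{2}(n/(k-1))^2p=(1-\tfrac1{k-1}+o(1))\binom n2p$; the content is in $m\ge3$.)

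For the upper bound I would run the sparse regularity method with the K\L R theorem as input. Let $G'\subseteq G\sim G(n,p)$ be $H$-free, and fix constants $d>0$, $t_0$, $\epsilon'>0$, in this order, with $d$ small, $t_0$ large, and $\epsilon'$ sufficiently small (in particular $\epsilon'\ll d$). Apply the sparse regularity lemma to $G'$, obtaining an equitable partition $V=V_0\cup V_1\cup\dots\cup V_t$ with $t_0\le t\le T(\epsilon')$, $|V_0|\le\epsilon' n$, $|V_i|=L=(1+o(1))n/t$ for $i\ge1$, in which all but at most $\epsilon'\binom t2$ pairs $(V_i,V_j)$ are $\epsilon'$-regular with respect to $p$. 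Let $R$ be the reduced graph on $\{1,\dots,t\}$, with $ij\in E(R)$ iff $(V_i,V_j)$ is $\epsilon'$-regular of density at least $dp$. The crucial point is that $R$ is $H$-free: a copy of $H$ in $R$ would, via the embedding lemma of Kohayakawa, \L uczak and R\"odl --- valid w.h.p.\ since $p\gg n^{-1/m_2(H)}$, the K\L R conjecture having been proved in \cite{CG,Sc} --- yield a copy of $H$ inside $G'$, a contradiction. Hence, by $ex(t,K_m,H)=(1+o(1))\binom{k-1}{m}(t/(k-1))^m$ (\cite{ASh}, quoted above), $\mathcal N(R,K_m)\le(1+o(1))\binom{k-1}{m}(t/(k-1))^m$.

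The remaining step is to transfer this count from $R$ to $G'$, which is the real work. Split the copies of $K_m$ in $G'$ according to the clusters they meet. Every copy meeting $V_0$, or having two vertices in a common $V_i$, or using an irregular or sub-$dp$ pair, passes through one of a set of, respectively, $\le(1+o(1))\epsilon' n^2p$, $\le(1+o(1))n^2p/(2t)$, or $O(\epsilon'+d)n^2p$ edges of $G$; by the second moment argument behind Lemma~\ref{lem:CopiesOnEdge} (inequality~(\ref{eq:expSq}) shows any $\alpha n^2p$ edges of $G$ lie in $O(\sqrt\alpha)\,\mathcal N(G,K_m)$ copies of $K_m$) together with $\mathcal N(G,K_m)=(1+o(1))\binom nm p^{\binom m2}$, these account for at most $O\!\big(\sqrt{\epsilon'}+t^{-1/2}+\sqrt d\big)\binom nm p^{\binom m2}$ copies. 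Every remaining copy has its $m$ vertices spread over clusters forming a $K_m$ of $R$, and for each such $K_m$ of $R$ the sparse counting lemma bounds the number of corresponding transversal copies in $G'$ by $(1+o_{\epsilon'}(1))\prod_{a<b}\big(d_{G'}(V_{i_a},V_{i_b})/p\big)L^mp^{\binom m2}$; since $G'\subseteq G(n,p)$ and w.h.p.\ every pair of vertex sets of size $\Omega(n)$ in $G(n,p)$ has density $(1+o(1))p$, every ratio is $1+o(1)$, so summing over the $\le\mathcal N(R,K_m)$ cliques of $R$ gives at most $(1+o(1))\mathcal N(R,K_m)L^mp^{\binom m2}\le(1+o(1))\binom{k-1}{m}(n/(k-1))^mp^{\binom m2}$. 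Adding the error terms and then sending $d\to0$, $t_0\to\infty$, $\epsilon'\to0$ yields $\mathcal N(G',K_m)\le(1+o(1))\binom{k-1}{m}(n/(k-1))^mp^{\binom m2}$, matching the lower bound.

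The main obstacle is this transfer step: invoking the sparse-random embedding and counting statements correctly in the range $p\gg n^{-1/m_2(H)}$, and verifying that the exceptional class, the intra-cluster edges and the irregular/sparse pairs together contribute only $o\!\big(\binom nm p^{\binom m2}\big)$ copies of $K_m$ --- which is exactly where Lemma~\ref{lem:CopiesOnEdge} and the hypothesis $p\gg n^{-1/m_2(K_m)}$ are used. Everything else is routine bookkeeping in the order of the parameters $d,t_0,\epsilon'$, and the same argument could alternatively be packaged through the hypergraph container method.
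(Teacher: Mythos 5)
Your proposal is correct and matches the paper's proof essentially step for step: the lower bound via a random subgraph of a balanced $(k-1)$-partite graph (Lemma~\ref{lem:CopiesOfK_mInSided}), the upper bound via the Kohayakawa--R\"odl sparse regularity lemma, an $H$-free cluster graph established by the K\L R embedding statement (the paper uses Proposition~\ref{prop:CountingH}, i.e.\ the CGSS form of the K\L R counting lemma, which is the same ingredient you invoke), the dense extremal bound $ex(t,K_m,H)$ from~\cite{ASh}, the strictly-balanced counting lemma to transfer to transversal $K_m$'s, and Lemma~\ref{lem:CopiesOnEdge} to absorb the $o(n^2p)$ edges from the exceptional class, intra-cluster pairs, irregular pairs, and sparse pairs. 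The only cosmetic differences are your use of a regularity partition with an exceptional class $V_0$ and your explicit $O(\sqrt\alpha)$ quantification of Lemma~\ref{lem:CopiesOnEdge}, both of which are fine.
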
 \color{black}

\begin{lemma}\label{lem:SmallValuesOfp} Let $H$ be a fixed graph with
	$\chi(H)=k>m$, \color{black} let $p<\min \{n^{-\frac{1}{m_2(H)}-\delta},
	n^{-\frac{1}{m_2(K_m)}-\delta} \}$ for some fixed $\delta>0$
	\color{black} and assume $n^m p^{\binom{m}{2}}$ tends to infinity as $n$
	tends to infinity{. T}hen
	$$ex(G(n,p),K_m,H)=(1+o(1))\binom{n}{m}p^{\binom{m}{2}}.$$ \end{lemma}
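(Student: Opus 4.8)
The plan is to prove the two matching bounds. The lower bound $ex(G(n,p),K_m,H)\geq (1-o(1))\binom{n}{m}p^{\binom{m}{2}}$ is easy: by Theorem~\ref{lem:ExpectedNumLemma} applied with $T=K_m$ (noting that for $p\gg n^{-1/m_2(K_m)}$ every subgraph $K_j$ of $K_m$ has expected count tending to infinity, and here $p$ is much smaller so this part needs a little care --- actually for the lower bound one only needs $\mathbb{E}[\mathcal{N}(G,K_m)]\to\infty$, which is the hypothesis $n^mp^{\binom m2}\to\infty$, together with a second-moment computation), w.h.p.\ $G$ itself contains $(1+o(1))\binom{n}{m}p^{\binom{m}{2}}$ copies of $K_m$. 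So it suffices to exhibit an $H$-free subgraph retaining almost all of them. For the upper bound, the trivial bound $ex(G(n,p),K_m,H)\leq \mathcal{N}(G,K_m)=(1+o(1))\binom{n}{m}p^{\binom{m}{2}}$ already suffices once we know the lower bound, so the entire content is the lower bound.

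For the lower bound I would argue that w.h.p.\ only $o(n^mp^{\binom m2})$ copies of $K_m$ in $G$ lie on an edge that is contained in some copy of $H$; deleting all such edges yields an $H$-free (indeed $H$-edge-free) subgraph that still contains $(1-o(1))\binom{n}{m}p^{\binom m2}$ copies of $K_m$. To bound the number of ``bad'' edges, recall that for $p\ll n^{-1/m_2(H)}$ one has, for every subgraph $H'\subseteq H$ with $v(H')\geq 3$, $\mathbb{E}[\text{number of copies of }H'] = \Theta(n^{v(H')}p^{e(H')}) \ll n^2 p$ (this is exactly the inequality $\frac{e(H')-1}{v(H')-2} \le m_2(H) < 1/(-\log_n p)$ rearranged, so $n^{v(H')-2}p^{e(H')-1}\to 0$), and $H$ itself has $v(H)\geq 3$ since $\chi(H)=k>m\geq 1$ forces $v(H)\geq k \ge 2$ --- I should check the corner case $k=2$, but there $m=1$ and the statement about $K_1$'s is trivial; for $k\ge 3$ we have $v(H)\ge 3$. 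Hence the expected number of copies of $H$ in $G$ is $o(n^2p)$, so by Markov w.h.p.\ $G$ contains $o(n^2p)$ copies of $H$, and in particular the set $E_0$ of edges lying in some copy of $H$ has $|E_0| \le e(H)\cdot o(n^2 p) = o(n^2p)$.

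Now I apply part~(1) of Lemma~\ref{lem:CopiesOnEdge}: since $|E_0|=o(pn^2)$ and $p\gg n^{-1/m_2(K_m)}$ --- wait, here $p\ll n^{-1/m_2(K_m)}$, so Lemma~\ref{lem:CopiesOnEdge} does not apply. Instead I use Lemma~\ref{lem:oneK_mPerEdge}: for $p=n^{-a}$ with $a > 1/m_2(K_m)$, w.h.p.\ the number of copies of $K_m$ that share an edge with another copy of $K_m$ is $o(n^mp^{\binom m2})$. This is even stronger than what I need --- it says almost every copy of $K_m$ is ``edge-isolated.'' So here is the clean argument: let $B$ be the set of copies of $K_m$ that are edge-isolated, i.e.\ no edge of the copy lies in any other copy of $K_m$; by Lemma~\ref{lem:oneK_mPerEdge}, $|B| = (1-o(1))\binom{n}{m}p^{\binom m2}$ w.h.p. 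The edges used by copies in $B$ are pairwise disjoint across copies, hence there are at most $o(n^2p)$ copies of $K_m$ that use an edge of $E_0$ --- but actually I want the reverse: among the $(1-o(1))\binom nm p^{\binom m2}$ edge-isolated copies, the number that use an edge of $E_0$ is at most $|E_0| = o(n^2p) = o(n^mp^{\binom m2})$ (using $n^{m-2}p^{\binom m2-1}\to\infty$, which follows from $p\gg n^{-1/m_2(K_m)}$). Deleting $E_0$ from $G$ therefore destroys at most $o(n^mp^{\binom m2})$ copies from $B$, leaving $(1-o(1))\binom nm p^{\binom m2}$ copies of $K_m$ in the $H$-free graph $G-E_0$, as desired.

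The main obstacle, and the point to be careful about, is reconciling the two regimes: Lemma~\ref{lem:oneK_mPerEdge} requires $p\ll n^{-1/m_2(K_m)}$ (its ``$-a<-1/m_2(K_m)$''), while the bound $|E_0|=o(n^mp^{\binom m2})$ needs $n^2p = o(n^mp^{\binom m2})$, i.e.\ $n^{m-2}p^{\binom m2-1}\to\infty$. The latter requires $p\gg n^{-1/m_2(K_m)}$ --- the exact opposite. So the naive plan fails when $p \ll n^{-1/m_2(K_m)}$. The resolution: when $p$ is that small, $n^2 p$ may exceed $n^mp^{\binom m2}$, but then one cannot afford to bound ``copies meeting $E_0$'' by $|E_0|$. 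Instead, bound it by the expected number of (copy of $K_m$, copy of $H$) incidences sharing an edge: this is $\sum_{F} \Theta(n^{v(F)}p^{e(F)})$ over graphs $F$ obtained by gluing a $K_m$ and a copy of some $H'\subseteq H$ along at least one edge; each such $F$ has $e(F)\ge \binom m2 + 1$ and $v(F) \le m + v(H') - 2$, and one checks $n^{v(F)}p^{e(F)} = n^mp^{\binom m2}\cdot n^{v(H')-2}p^{e(H')-1} = o(n^mp^{\binom m2})$ precisely because $n^{v(H')-2}p^{e(H')-1}\to 0$ for $p\ll n^{-1/m_2(H)}$, $v(H')\ge 3$. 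This direct first-moment count on the ``co-occurrence'' configurations works uniformly in both sub-regimes and is the cleanest route; I would present it that way and invoke Markov's inequality, avoiding Lemma~\ref{lem:oneK_mPerEdge} altogether for this lemma (or using it only for the sub-case $p\ll n^{-1/m_2(K_m)}$ if one prefers the modular structure).
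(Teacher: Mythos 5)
Your overall strategy is in the same spirit as the paper's, but there is a genuine gap at the crucial quantitative step. You want to bound the number of $K_m$'s destroyed by counting, via first moment, configurations $F$ obtained by gluing a $K_m$ to a copy of ``some $H'\subseteq H$'' along an edge, and you assert that $n^{v(H')-2}p^{e(H')-1}\to 0$ whenever $v(H')\geq 3$ and $p\ll n^{-1/m_2(H)}$. This is false for a general subgraph $H'\subseteq H$. The condition $p\ll n^{-1/m_2(H)}$, i.e.\ $p=n^{-a}$ with $1/a<m_2(H)$, gives $n^{v(H')-2}p^{e(H')-1}\to 0$ precisely when $\frac{e(H')-1}{v(H')-2}>1/a$; you only know $\frac{e(H')-1}{v(H')-2}\leq m_2(H)$ and $1/a<m_2(H)$, and these two inequalities do not combine to give what you need. (You in fact wrote ``$\frac{e(H')-1}{v(H')-2}\le m_2(H) < 1/(-\log_n p)$,'' which has the last inequality backwards, and even with the sign corrected the deduction fails.) A concrete counterexample: if $H'$ is a path on $\ell$ vertices inside $H$, then $n^{v(H')-2}p^{e(H')-1}=(np)^{\ell-2}\to\infty$ whenever $np\to\infty$, so the corresponding configuration count blows up. The same issue infects your earlier claim that $\mathbb E[\mathcal N(G,H)]=o(n^2p)$: that, too, requires $\frac{e(H)-1}{v(H)-2}>1/a$, which need not hold since $H$ itself need not attain $m_2(H)$.

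The repair is exactly the move the paper makes: fix $H'$ to be a subgraph of $H$ \emph{achieving} the 2-density, so that $\frac{e(H')-1}{v(H')-2}=m_2(H)>1/a$ with room to spare. Then $n^{v(H')-2}p^{e(H')-1}\to 0$, and more generally for any overlap $J\subseteq H'$ containing the marked edge $e$ one has $\frac{e(H')-e_J}{v(H')-v_J}\geq m_2(H)$ (by extremality of $H'$, since $\frac{e_J-1}{v_J-2}\leq m_2(H)$), so the first-moment bound extends to arbitrary gluings --- the step you deferred with ``one checks.'' The paper makes the graph $H$-free by removing a single designated edge from each copy of $H'$, not all edges of $H$-copies; together with Lemma~\ref{lem:oneK_mPerEdge}, which ensures each remaining edge lies in at most one $K_m$, the number of destroyed $K_m$'s is controlled directly by the number of gluings $H_i$. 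Your remark at the end about using Lemma~\ref{lem:oneK_mPerEdge} ``only for the sub-case $p\ll n^{-1/m_2(K_m)}$'' is moot: under the hypothesis $p<\min\{n^{-1/m_2(H)-\delta},n^{-1/m_2(K_m)-\delta}\}$ one is always in that sub-case.
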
 %

\begin{lemma}\label{lem:MidRangeForEasyOrd} Let $H$ be a fixed graph
	with $\chi(H)=k>m$ and let $n^{-1/m_2(K_m)-\epsilon}<\color{black}p\ll
	n^{-1/m_2(H)}\color{black}$  where $\epsilon>0$ is sufficiently small.
	Then $$ex(G(n,p),K_m,H)=(1+o(1))\binom{n}{m}p^{\binom{m}{2}}.$$
\end{lemma}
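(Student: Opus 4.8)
The plan is to show that in this middle range one can typically keep almost all copies of $K_m$ in an $H$-free subgraph, so the answer matches the trivial upper bound $\binom{n}{m}p^{\binom{m}{2}}$ (which holds w.h.p.\ by Theorem \ref{lem:ExpectedNumLemma} applied to $H'=K_m$, using $n^mp^{\binom m2}\to\infty$, which follows since $p\gg n^{-1/m_2(K_m)}$ forces $np^{(m+1)/2}\to\infty$). The upper bound is immediate, so the work is the lower bound: exhibiting an $H$-free subgraph $G'\subseteq G$ with $(1+o(1))\binom nm p^{\binom m2}$ copies of $K_m$. Since $p\ll n^{-1/m_2(H)}$, the expected number of copies of $H$ in $G$ is $o(1)$ for the densest subgraph $H_0$ of $H$ realizing $m_2(H)$ — more precisely, writing $p=n^{-a}$ with $a>1/m_2(H)$, every subgraph $H'\subseteq H$ with $v(H')\ge 3$ has $\E[X_{H'}]=n^{v(H')-a\,e(H')}\le n^{v(H')-2}\cdot n^{(2-v(H')\cdot\, ?)}$; the point is that $a\,e(H') > v(H')$ is false in general, so this simple first-moment deletion is not quite enough and one must be slightly more careful.

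The key steps I would carry out, in order. First, identify the obstruction: copies of $H$ in $G$ need not be rare, but each such copy uses at least $e(H_0)$ edges from a dense spot, and more usefully, \emph{every} copy of $H$ contains a subgraph $H'$ with $v(H')\ge 3$ and $e(H')\ge m_2(H)(v(H')-2)+1$; when $p\ll n^{-1/m_2(H)}$, the expected number of copies of such an $H'$ (over the worst $H'$) is $O(n^{v(H')}p^{e(H')})=O(n^{v(H')-e(H')/m_2(H)-c})$ for some $c>0$, which is $o(n^2p)$ — here I use that $v(H')-e(H')/m_2(H)\le v(H')-(v(H')-2)-1=1 < 2$ and that $p=n^{-a}$ with $a$ bounded above, so a power of $n$ is saved and the count is $O(n^{2-\eta}p)$ for some $\eta>0$, in particular $o(pn^2)$. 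Second, delete one edge from each such subgraph; by the first moment bound the number of edges deleted is w.h.p.\ $o(pn^2)$ (Markov, since the expectation is $o(pn^2)$). The resulting subgraph $G'$ is $H$-free. Third, apply Lemma \ref{lem:CopiesOnEdge}(1): since $p\gg n^{-1/m_2(K_m)}$ by hypothesis, w.h.p.\ any set of $o(pn^2)$ edges lies in only $o(\mathcal N(G,K_m))$ copies of $K_m$. Hence $\mathcal N(G',K_m)\ge \mathcal N(G,K_m) - o(\mathcal N(G,K_m))=(1+o(1))\binom nm p^{\binom m2}$, where the last equality is again the expected-value estimate for $K_m$ in $G$. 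Taking a union bound over the (constantly many) events coming from Lemma \ref{lem:CopiesOnEdge}, the expectation estimates, and the Markov step finishes the proof.

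The main obstacle is the first step: verifying that the ``bad'' subgraphs $H'$ one must destroy have expected count $o(pn^2)$ rather than merely $o(n^2)$ or $O(1)$. This needs the genuine inequality $v(H')-e(H')/m_2(H)<2$ combined with the explicit relation $p=n^{-a}$, $a>1/m_2(H)$, to extract a factor $n^{-\eta}$ with $\eta>0$ depending only on $H$ and on how far $p$ is below $n^{-1/m_2(H)}$; one has to be mildly careful that $\eta$ can be taken uniform over all $H'\subseteq H$ (finitely many) and that the regime $p\ll n^{-1/m_2(H)}$ (rather than $p\le n^{-1/m_2(H)-\delta}$) still suffices, which it does because one only needs $\E[\text{bad }H']/(pn^2)\to 0$, and $\E[\text{bad }H']/(pn^2)=O(n^{v(H')-e(H')/m_2(H)-1}\cdot (n^{-1/m_2(H)}/p))=o(1)$ since the exponent $v(H')-e(H')/m_2(H)-1\le 0$ and the ratio $n^{-1/m_2(H)}/p\to\infty$ — wait, that ratio goes to infinity, so I instead bound $\E[\text{bad }H']=O(n^{v(H')}p^{e(H')})$ and note $v(H')-a\,e(H')\le v(H')-\frac{e(H')}{m_2(H)} \le 1$, giving $\E[\text{bad }H']=O(n)=o(pn^2)$ since $pn^2\gg n^{2-1/m_2(K_m)}\gg n$. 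This last chain is the routine-but-delicate computation; everything after it is a clean application of Lemmas \ref{lem:CopiesOnEdge} and \ref{lem:ExpectedNumLemma}.
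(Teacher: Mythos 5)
Your overall strategy is the same as the paper's: pick a subgraph $H'\subseteq H$ with $\frac{e(H')-1}{v(H')-2}=m_2(H)$, delete one edge from every copy of $H'$, show the expected number of copies of $H'$ is $o(pn^2)$ so that w.h.p.\ the deleted edge set is $o(pn^2)$, and then invoke Lemma~\ref{lem:CopiesOnEdge} to conclude that this removes only $o(n^m p^{\binom m2})$ copies of $K_m$. However, there is a genuine gap in the last step, and a smaller error in the count.

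The main gap is your claim that ``$p\gg n^{-1/m_2(K_m)}$ by hypothesis.'' The hypothesis of the lemma is only $p>n^{-1/m_2(K_m)-\epsilon}$, which deliberately includes values of $p$ at and \emph{below} $n^{-1/m_2(K_m)}$ (this is exactly the overlap with Lemma~\ref{lem:SmallValuesOfp} needed to cover all $p$). For such $p$, Lemma~\ref{lem:CopiesOnEdge} does not apply as stated — its proof uses $p\gg n^{-1/m_2(K_m)}$ in the second-moment calculation, and without that condition the second moment is no longer $O(\E^2[X])$. The paper handles this regime by a coupling/monotonicity argument: set $q=n^{2\epsilon}p\gg n^{-1/m_2(K_m)}$, apply Lemma~\ref{lem:CopiesOnEdge}(2) in $G(n,q)$ to bound the number of $K_m$'s on a set of $n^{-\alpha}qn^2$ edges by $n^{-\delta}n^m q^{\binom m2}$, and then use monotonicity to carry the bound down to $G(n,p)$; choosing $\epsilon$ small enough makes $n^{-\delta}n^mq^{\binom m2}=n^{-\delta+2\binom m2\epsilon}n^mp^{\binom m2}=o(n^mp^{\binom m2})$. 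Your proposal has no analogue of this step and therefore does not cover the full stated range of $p$; this is why the lemma requires $\epsilon$ ``sufficiently small.''

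Separately, your bound $v(H')-\frac{e(H')}{m_2(H)}\leq 1$ is false: for an $H'$ achieving the maximum one has $e(H')-1=m_2(H)(v(H')-2)$, hence $v(H')-\frac{e(H')}{m_2(H)}=2-\frac{1}{m_2(H)}>1$ whenever $m_2(H)>1$ (which always holds here since $\chi(H)\geq 3$). Consequently $\E[\mathcal{N}(G,H')]=O(n)$ is also wrong in general. The correct and cleaner route (which is what the paper does) is to write
$$\frac{\E[\mathcal{N}(G,H')]}{pn^2}=\Theta\!\left(n^{v(H')-2}p^{e(H')-1}\right)=\Theta\!\left((np^{m_2(H')})^{v(H')-2}\right)\longrightarrow 0$$
directly from $p\ll n^{-1/m_2(H')}$, without needing to compare against $n$. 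Your final conclusion $\E[\mathcal{N}(G,H')]=o(pn^2)$ is correct, but it does not follow from the chain you wrote.
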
 \color{black}

\color{black} Lemma \ref{lem:BigValuesOfp} takes care of the first part
of Theorem \ref{thm:m_2InEasyOrder}. If $m_2(H)\geq m_2(K_m)$ then
$n^{-1/m_2(H)}\geq n^{-1/m_2(K_m)}$ and this lemma covers values of $p$
for which $p\gg n^{-1/m_2(H)}$.

For the second part of Theorem \ref{thm:m_2InEasyOrder} we have Lemmas
\ref{lem:SmallValuesOfp}  and \ref{lem:MidRangeForEasyOrd}. If
$m_2(H)>m_2(K_m)$ Lemma \ref{lem:SmallValuesOfp} covers values of $p$
for which $p<n^{-1/m_2(K_m)-\delta}$ and Lemma
\ref{lem:MidRangeForEasyOrd} covers the range
$n^{-1/m_2(K_m)-\epsilon}<p\ll n^{-1/m_2(H)}$. Choosing
$\epsilon>\delta$ makes sure we do not miss values of  $p$.
\color{black}

We mostly focus on the proof of Lemma \ref{lem:BigValuesOfp}, as the
other two are simpler. Lemmas \ref{lem:BigValuesOfp} and
\ref{lem:SmallValuesOfp} are also relevant for the case
$m_2(H)<m_2(K_m)$, and are used again in Section \ref{sec:DifOrder}. For
the proof of Lemma \ref{lem:BigValuesOfp} we need several tools.

\begin{lemma} \label{lem:CopiesOfK_mInSided} Let $G$ be a $k$-partite
	complete graph with each side of size $n$, let $p\in[0,1]$ and let $G'$
	be a random subgraph of $G$ where each edge is chosen randomly and
	independently with probability $p$. If $n^mp^{\binom{m}{2}}$ goes to
	infinity together with $n$ then the number of copies of $K_m$ for $m<k$
	with each vertex in a different $V_i$ is w.h.p.\
	$$(1+o(1))\binom{k}{m}n^mp^{\binom{m}{2}} . $$ \end{lemma}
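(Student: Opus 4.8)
The plan is to prove this by a routine second moment computation, in direct analogy with the proof of Theorem~\ref{lem:ExpectedNumLemma} but carried out in the random subgraph $G'$ of a complete $k$-partite host rather than in $G(n,p)$. Call an $m$-subset of $V(G)$ whose vertices lie in $m$ distinct parts a \emph{potential copy}; there are exactly $\binom{k}{m}n^m$ of them, and each spans $\binom{m}{2}$ edges of $G$. For a potential copy $A$ let $X_A$ be the indicator of the event that all $\binom{m}{2}$ edges of $A$ survive in $G'$, and let $X=\sum_A X_A$ be the random variable in the statement. Then $\E[X]=\binom{k}{m}n^m p^{\binom{m}{2}}=:\mu$, which tends to infinity by hypothesis, and by Chebyshev's inequality it is enough to show $\Var[X]=o(\mu^2)$.

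For the variance I would write $\Var[X]=\sum_{A,B}\Cov(X_A,X_B)$ and group the terms by $j:=|V(A)\cap V(B)|$. If $j\le 1$ the edge sets of $A$ and $B$ are disjoint, so $X_A$ and $X_B$ are independent and the term is $0$; hence only $2\le j\le m$ contribute. Fix such a $j$. Since the vertices of each of $A,B$ lie in distinct parts, their $j$ common vertices lie in $j$ distinct parts and span $\binom{j}{2}$ edges of $G$, which are precisely the edges common to $A$ and $B$; therefore $A\cup B$ spans $2\binom{m}{2}-\binom{j}{2}$ edges and $\E[X_AX_B]=p^{2\binom{m}{2}-\binom{j}{2}}$. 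Moreover the number of ordered pairs $(A,B)$ with $|V(A)\cap V(B)|=j$ is $O(n^{2m-j})$: there are $O(n^m)$ choices for $A$, then $O(1)$ choices for which $j$ of its vertices are common, and $O(n^{m-j})$ choices for the remaining $m-j$ vertices of $B$. Since $\Cov(X_A,X_B)\le\E[X_AX_B]$, level $j$ contributes $O\!\left(n^{2m-j}p^{2\binom{m}{2}-\binom{j}{2}}\right)$ to $\Var[X]$.

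It remains to see that each of these at most $m-1$ contributions is $o(\mu^2)=o\!\left(n^{2m}p^{2\binom{m}{2}}\right)$. Dividing, the ratio at level $j$ equals $n^{-j}p^{-\binom{j}{2}}=\big(np^{(j-1)/2}\big)^{-j}$, so I only need $np^{(j-1)/2}\to\infty$ for every $2\le j\le m$. The hypothesis $n^mp^{\binom{m}{2}}\to\infty$ says precisely that $\big(np^{(m-1)/2}\big)^m\to\infty$, hence $np^{(m-1)/2}\to\infty$; and since $j\le m$ and $p\le 1$ we have $np^{(j-1)/2}\ge np^{(m-1)/2}$, so all the required limits hold (in particular the extreme term $j=m$, which is just the diagonal, is $O(\mu)=o(\mu^2)$). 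Thus $\Var[X]=o(\mu^2)$ and the lemma follows.

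I do not expect a real obstacle here: this is a textbook concentration estimate, and the only point worth noting is that the single hypothesis $n^mp^{\binom{m}{2}}\to\infty$ is exactly what makes every covariance level negligible — equivalently, $K_m$ is its own densest subgraph, so that the requirement of Theorem~\ref{lem:ExpectedNumLemma} on all subgraphs of $K_m$ collapses to the requirement on $K_m$ itself. One could in fact just observe that the proof of Theorem~\ref{lem:ExpectedNumLemma} goes through verbatim with the complete $k$-partite graph as host and quote it; but the two-line second moment bound above is self-contained and at least as short.
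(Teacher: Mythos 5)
Your proposal is correct and takes essentially the same route as the paper: both are second-moment concentration arguments that estimate the contribution of overlapping pairs level-by-level according to the number $j$ of shared vertices, and both reduce to observing that $n^{j}p^{\binom{j}{2}}=\bigl(np^{(j-1)/2}\bigr)^{j}\to\infty$ for $2\le j\le m$, which follows from the single hypothesis $n^mp^{\binom{m}{2}}\to\infty$ together with $p\le 1$. The only cosmetic difference is that the paper invokes the packaged $\Delta^*$-criterion for symmetric events (Corollary 4.3.5 in Alon--Spencer) whereas you compute $\Var[X]$ directly and apply Chebyshev; the underlying estimates are identical.
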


To  prove the lemma, we use the following concentration result:

\begin{lemma}[see, e.g., Corollary 4.3.5 in \cite{AS}]
	\label{lem:IndEventsConst} Let $X_1, X_2,...,X_r$ be indicator random
	variables for events $A_i$, and let $X=\sum_{i=1}^{r} X_i$. Furthermore
	assume $X_1,...,X_r$ are symmetric (i.e. for every $i\ne j$ there is a
	measure preserving mapping of the probability space that sends event
	$A_i$ to $A_j$). Write $i\sim j$ for $i\ne j$ if the events $A_i$ and
	$A_j$ are not independent. Set $\Delta^*=\sum_{i\sim j}\mathbb{P}(A_j |
	A_i)$ for some fixed $i$. If $\E[X]\to \infty$ and $\Delta^*=o(\E[X])$
	then $X=(1+o(1))\E(X)$. \end{lemma}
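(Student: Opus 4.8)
The plan is to prove the statement by the second moment method: bound $\Var[X]$ and apply Chebyshev's inequality. First I would fix notation. By the symmetry hypothesis all the events $A_i$ have a common probability $q:=\mathbb{P}(A_i)$, so that $\mu:=\E[X]=rq$. Moreover, for a fixed index $i$ the quantity $\sum_{j\,:\,j\sim i}\mathbb{P}(A_j\mid A_i)$ does not depend on the choice of $i$: a measure‑preserving bijection of the probability space carrying $A_i$ to $A_{i'}$ carries the family of events not independent of $A_i$ bijectively onto the family of events not independent of $A_{i'}$, and preserves conditional probabilities. Hence $\Delta^*$ is well defined.

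Next I would estimate the variance via the standard decomposition
\[
\Var[X]=\sum_{i=1}^r\Var[X_i]+\sum_{i\ne j}\Cov[X_i,X_j].
\]
Since each $X_i$ is a $0/1$ variable, $\Var[X_i]=q-q^2\le q$, so the first sum is at most $rq=\mu$. For the off‑diagonal terms, if $A_i$ and $A_j$ are independent then $\Cov[X_i,X_j]=0$, so only pairs with $i\sim j$ contribute; and for such a pair $\Cov[X_i,X_j]\le\mathbb{P}(A_i\cap A_j)=\mathbb{P}(A_i)\,\mathbb{P}(A_j\mid A_i)$. Summing first over $j\sim i$ and then over $i$, and using that the inner sum equals $\Delta^*$ for every $i$, the second sum is at most $\sum_i\mathbb{P}(A_i)\,\Delta^*=\mu\Delta^*$. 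Therefore $\Var[X]\le\mu(1+\Delta^*)$.

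Finally, the hypotheses $\mu\to\infty$ and $\Delta^*=o(\mu)$ give $\Var[X]\le\mu(1+\Delta^*)=o(\mu^2)$, so Chebyshev's inequality yields, for every fixed $\epsilon>0$,
\[
\mathbb{P}\big(\,|X-\mu|>\epsilon\,\mu\,\big)\le\frac{\Var[X]}{\epsilon^2\mu^2}\longrightarrow 0,
\]
which is exactly the assertion $X=(1+o(1))\E[X]$ with probability tending to $1$. The argument is entirely routine; the only points deserving a line of care are the justification that $\Delta^*$ is independent of the chosen index $i$ (this is where the symmetry assumption enters) and the observation that independent pairs drop out of the covariance sum, so that only the $i\sim j$ terms survive. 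No genuine obstacle arises beyond these bookkeeping remarks.
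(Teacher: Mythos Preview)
Your argument is correct and is exactly the standard second moment proof; the paper does not give its own proof of this lemma but merely cites it as Corollary~4.3.5 in Alon--Spencer, where the same variance bound $\Var[X]\le \mu(1+\Delta^*)$ followed by Chebyshev is used. There is nothing to add.
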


\begin{proof}[Proof of lemma  \ref{lem:CopiesOfK_mInSided}] The expected
	number of copies of $K_m$ in $G'$ is $(1+o(1))\binom{k}{m}n^m
	p^{\binom{m}{2}}${.  S}o we only need to show that it is indeed
	concentrated around its expectation. To do so we use Lemma
	\ref{lem:IndEventsConst}.

	Let $A_i$ be the event that a specific copy of $K_m$ appears in $G'$,
	and $X_i$ {be}  its indicator function. Clearly the number of copies of
	$K_m$ in $G'$ is $X=\sum X_i$. In this case $i\sim j$ if the
	{corresponding copies of $K_m$} share edges. We write $i\cap j=a$ if
	the two copies share exactly $a$ vertices. It is clear that the
	variables $X_i$ are symmetric. By the definition in the lemma{,}
	
	\begin{align*} \Delta^*&=\sum_{i\sim j}\mathbb{P}(A_j | A_i)\\
	&=\sum_{2\leq  a\leq m-1}\sum_{i\cap j=a}\mathbb{P}(A_j | A_i)\\ &\leq
	\sum_{2\leq  a\leq m-1} \binom{m}{a} \binom{k-a}{m-a}
	n^{m-a}p^{\binom{m-a}{2}+(m-a)a}\\ &=o(\binom{k}{m}n^m
	p^{\binom{m}{2}}){.} \end{align*}
	
	The last inequality \color{black} holds \color{black} as
	$n^{m}p^{\binom{m}{2}}=n^{m-a}p^{\binom{m-a}{2}+(m-a)a}\cdot
	n^{a}p^{\binom{a}{2}}$ and
	$n^{a}{p^{\binom{a}{2}}}=(np^{\frac{a-1}{2}})^a$ tends to infinity as
	$n$ tends to infinity  for $a<m$. \end{proof}

\color{black}To prove the upper bound in Lemma \ref{lem:BigValuesOfp} we
use \color{black} a standard technique for estimating the number of
copies of a certain graph inside another. \color{black}This is done by
applying \color{black} Szemeredi's regularity lemma and then a relevant
counting lemma. The regularity lemma allows us to find an equipartition
of any graph into a constant number of sets $\{V_i\}$, such that most of
the pairs of sets $\{V_i,V_j\}$ are regular (i.e. the densities between
large subsets of sets $V_i$ and $V_j$ do not deviate by more than
$\epsilon$ from the density between $V_i$ and $V_j$).

In a sparse graph (such as a dense subgraph of a sparse random graph) we
need a stronger definition of regularity than the one used in dense
graphs. Let $U$ and $V$ be two disjoint subsets of $V(G)$. We say that
they form an $(\epsilon,p)$-regular pair if for any $U'\subseteq U,
V'\subseteq V$ such that $|U'|\geq \epsilon |U|$ and $|V'|\geq \epsilon
|V|$:

$$ |d(U',V')-d(U,V)|\leq \epsilon p{,}$$ {w}here
$d(X,Y)=\frac{|E(X,Y)|}{|X| |Y|}$ is the edge density between two
disjoint sets $X,Y\subseteq {V(G)}$.

Furthermore, an $(\epsilon,p)$-partition of the vertex set of a graph
$G$ is an equipartition of  $V(G)$ into $t$ pairwise disjoint sets
$V(G)=V_1 \cup ... \cup V_t$ in which all but at most $\epsilon t^2$
pairs of sets are $(\epsilon,p)$-regular. For a dense graph{,}
Szemer\'edi's regularity lemma assures us that we can always find a
regular partition of the graph into at most $t(\epsilon)$ parts, but
this is not enough for sparse graphs. For the case of subgraphs of
random graphs{,} one can use a variation by Kohayakawa and R\"odl
\cite{KR} (see also \cite{S}, \cite{ACHKRS} and \cite{L} for some
related results).

In this regularity lemma we add an extra condition. We say that a graph
$G$ on $n$ vertices is $(\eta, p, D)$\textit{-upper-uniform} if for all
disjoint sets $U_1,U_2 \subset V(G)$ such that $|U_i|>\eta n$ one has
$d(U_1,U_2)\leq Dp$. Given this definition we can now state the needed
lemma:

\begin{theorem}[\cite{KR}] \label{thm:randReg} For every $\epsilon>0$,
	$t_0>0$ and $D>0$, there are $\eta,T$ and $N_0$ such that for any
	$p\in[0,1]${, each} $(\eta,p,D)$-upper-uniform graph on $n>N_0$ vertices
	has an $(\epsilon,p)$-regular partition into $t \in [t_0, T]$ parts.
\end{theorem}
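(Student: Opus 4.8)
The plan is to adapt Szemer\'edi's original energy-increment proof of the dense regularity lemma to the sparse, $p$-scaled setting, using the $(\eta,p,D)$-upper-uniformity hypothesis precisely to keep the relevant potential function bounded. For an equipartition $\mathcal{P}=\{V_1,\dots,V_t\}$ of $V(G)$ define the scaled index
\[
q_p(\mathcal{P}) \;=\; \sum_{1\le i,j\le t} \frac{|V_i|\,|V_j|}{n^2}\left(\frac{d(V_i,V_j)}{p}\right)^{2}.
\]
The first step is to observe that $q_p$ is bounded by a constant depending only on $D$ (say $q_p(\mathcal{P})\le D^{2}$) whenever every part satisfies $|V_i|>\eta n$, since then $d(V_i,V_j)\le Dp$ by upper-uniformity; the same bound persists under refinements as long as every cell stays above $\eta n$. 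The second step is, as in the dense case, to show via the defect form of the Cauchy--Schwarz inequality that if $\mathcal{P}$ is \emph{not} an $(\epsilon,p)$-regular partition, i.e.\ more than $\epsilon t^{2}$ pairs $(V_i,V_j)$ fail $(\epsilon,p)$-regularity, then splitting each $V_i$ along the witnessing subsets $U\subseteq V_i$ (over all pairs it participates in) yields a refinement $\mathcal{P}'$ with $q_p(\mathcal{P}')\ge q_p(\mathcal{P})+c\,\epsilon^{5}$ for an absolute constant $c>0$; the rescaling by $1/p$ is exactly matched by the fact that the density deviation witnessing irregularity is larger than $\epsilon p$.

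Iterating, the increment step can be applied at most $O(D^{2}/\epsilon^{5})$ times before $q_p$ would exceed its a priori bound, so the process must terminate with an $(\epsilon,p)$-regular equipartition. Each application multiplies the number of parts by at most a bounded function of the current number of parts (each part is cut into at most $2^{t}$ pieces, and the cells are then re-amalgamated into equal parts in the usual way), so the number of parts at termination is at most a constant $T=T(\epsilon,D,t_0)$, a tower-type function of $D^{2}/\epsilon^{5}$ applied to $t_0$, independent of $\eta$ and $n$. Starting from any equipartition into at least $t_0$ parts and noting refinements only increase the count gives $t\in[t_0,T]$.

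The one genuinely sparse difficulty, and the reason for the order of the quantifiers in the statement, is maintaining the density bound for $q_p$ throughout the iteration: the increment argument also invokes densities between subsets $U\subseteq V_i$ of size at least $\epsilon|V_i|$, so upper-uniformity must apply to all such sets. Since the number of parts never exceeds $T$ and parts are kept of size at least $n/(2T)$, every set used has size at least $\epsilon n/(2T)$; hence it suffices to choose $\eta<\epsilon/(2T)$ \emph{after} $T$ has been fixed, and then $N_0=N_0(\epsilon,D,t_0)$ large enough that the rounding implicit in ``equipartition'' is harmless. I expect the bookkeeping of the equal-parts refinement — absorbing remainders so that all cells stay balanced and above $n/(2T)$ — to be the most tedious point, but it is entirely standard; the conceptual content is the boundedness of $q_p$, which is handed to us by the upper-uniformity hypothesis.
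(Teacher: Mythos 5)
The paper does not prove this statement: Theorem~\ref{thm:randReg} is quoted as a black box from Kohayakawa and R\"odl \cite{KR} and used downstream without proof, so there is no ``paper's own proof'' to compare against. That said, your sketch is in its essentials the Kohayakawa--R\"odl argument: the $p$-rescaled mean-square density in place of Szemer\'edi's index, boundedness of that index via $(\eta,p,D)$-upper-uniformity, the defect Cauchy--Schwarz increment of order $\epsilon^{5}$, a tower-type bound $T$, and the crucial quantifier observation that $\eta$ depends on $T$ and hence must be chosen last. That quantifier order is exactly the content of the statement and the one genuinely sparse ingredient, and you identified it correctly.

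One spot where your justification is imprecise, though the conclusion survives: you say upper-uniformity ``must apply'' to the witness sets $U\subseteq V_i$ with $|U|\ge\epsilon|V_i|$, and more strongly that ``every set used has size at least $\epsilon n/(2T)$.'' Neither is where upper-uniformity actually enters, and the second claim is false as stated: the Boolean atoms of the common refinement $\mathcal{Q}$ obtained from the up-to-$t$ witness sets inside a single cell can be far smaller than $\epsilon n/(2T)$, indeed arbitrarily small. This is harmless because the defect Cauchy--Schwarz increment and the monotonicity of $q_p$ under refinement are unconditional inequalities requiring no density bound at all; upper-uniformity is invoked \emph{only} to cap $q_p$ on the equipartitions $\mathcal{P}_0,\mathcal{P}_1,\dots$ produced at each round, whose cells have size $\Omega(n/T)$. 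The chain $q_p(\mathcal{P}_i)+c\epsilon^{5}\le q_p(\mathcal{Q})\le q_p(\mathcal{P}_{i+1})\le D^{2}$ (modulo the exceptional-set bookkeeping in the re-equalization step) then closes the argument. So it already suffices to take $\eta<1/(2T)$; the extra factor of $\epsilon$ you inserted is not needed, and the reason $\eta$ must come after $T$ is the equipartition cell size, not the witness-set size.
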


In order to estimate the number of copies of a certain graph after
finding a regular partition one needs counting lemmas. We use a
proposition from \cite{CGSS} to show that a certain \color{black}
cluster \color{black} graph is $H$-free, and to give a direct estimate
on the number of copies of $K_m$. To state the proposition we need to
introduce some notation. For a graph $H$ on $k$ vertices, $\{1,...,k\}$,
and for a sequence of integers $\textbf{m}=(m_{ij})_{ij\in E(H)}${,}  we
denote by $\mathcal{G}(H,n',\textbf{m},\epsilon,p)$ the following family
of graphs. The vertex set of each graph in the family is a disjoint
union of sets $V_1,...,V_k$ such that $|V_i|=n'$ for {all} $i$. As for
the edges, for each $ij\in E(H)$ there is an $(\epsilon,p)$-regular
bipartite graph with $m_{ij}$ edges between the sets $V_i$ and $V_j$,
and these are all the edges in the graph. For any $G\in
\mathcal{G}(H,n',\textbf{m},\epsilon,p)$ denote by $G(H)$ the number of
copies of $H$ in $G$ in which every vertex $i$ is in the set $V_i$.

\begin{proposition}[\cite{CGSS}] \label{prop:CountingH} For every graph
	$H$ and every $\delta,d > 0$, there exist{s}  $\xi > 0$ with the
	following property. For every $\eta>0$, there is a $C > 0$ such that if
	$p \geq  Cn^{-1/m_2(H)}$ then w.h.p. the following holds in  $G(n,p)$.
	\begin{enumerate} \item \label{prop:NumOfUnbH} For every $n'\geq \eta
		n$, \textbf{m} with $m_{ij}\geq dp(n')^2$ for all $ij\in H$ and every
		subgraph $G$ of $G(n,p)$ in $\mathcal{G}(H,n',\textbf{m},\epsilon,p)${,}
		\begin{equation} G(H)\geq \xi \left(  \prod_{ij\in E(H)}
		\frac{m_{ij}}{(n')^2} \right) (n')^{v(H)}{.} \end{equation} \item
		\label{prop:NumOfBalH} Moreover, if $H$ is strictly balanced, i.e. for
		every proper subgraph $H'$ of $H$ one has $m_2(H)>m_2(H')${,} then
		\begin{equation} G(H)=(1\pm \delta) \left(  \prod_{ij\in E(H)}
		\frac{m_{ij}}{(n')^2} \right) (n')^{v(H)}{.} \end{equation}
	\end{enumerate} \end{proposition}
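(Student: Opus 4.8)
The plan is to derive both parts from the K\L R conjecture of Kohayakawa--\L uczak--R\"odl (now a theorem) by a routine first--moment transference to $G(n,p)$; this is essentially the argument of \cite{CGSS}. The form of the K\L R theorem I would use is the counting one: for every graph $H$ and every $d,\delta_0>0$ there is $\epsilon>0$, and a corresponding $\beta=\beta(\epsilon,H,d)\in(0,1)$ with $\beta\to0$ as $\epsilon\to0$, such that for all large $n'$ and every integer vector $\mathbf m=(m_{ij})_{ij\in E(H)}$ with $dp(n')^2\le m_{ij}\le (n')^2$, the number of graphs $G\in\mathcal G(H,n',\mathbf m,\epsilon,p)$ with $G(H)<\delta_0\prod_{ij\in E(H)}\frac{m_{ij}}{(n')^2}(n')^{v(H)}$ is at most $\beta^{\sum_{ij}m_{ij}}\prod_{ij\in E(H)}\binom{(n')^2}{m_{ij}}$; for strictly balanced $H$ the same holds with the condition ``$G(H)<\dots$'' replaced by ``$G(H)\notin(1\pm\delta_0)\prod_{ij}\frac{m_{ij}}{(n')^2}(n')^{v(H)}$''. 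Matching the $(\epsilon,p)$--regularity in the statement with the ``regular relative to its own density'' notion above costs only a constant factor in $\epsilon$, since here every relevant density $m_{ij}/(n')^2$ is $\Theta(p)$.

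Granting this, Part~1 follows by a union bound inside $G(n,p)$. Fix clusters $V_1,\dots,V_k$ with $|V_i|=n'\ge\eta n$ and a vector $\mathbf m$ with $m_{ij}\ge dp(n')^2$, and count the subgraphs $G\subseteq G(n,p)$ supported on these clusters that are $(\epsilon,p)$--regular with exactly $m_{ij}$ edges in pair $ij$ and have $G(H)<\xi\prod(m_{ij}/(n')^2)(n')^{v(H)}$, where $\xi:=\delta_0$. A fixed such abstract graph is a subgraph of $G(n,p)$ with probability $p^{\sum_{ij}m_{ij}}$, so the expected number of them is at most
\[
\beta^{\,\sum_{ij}m_{ij}}\prod_{ij\in E(H)}\binom{(n')^2}{m_{ij}}p^{m_{ij}}\ \le\ \Bigl(\beta\,e^{O(1)}\Bigr)^{\sum_{ij}m_{ij}},
\]
using $\binom{N}{m}p^{m}\le(eNp/m)^{m}=e^{O(m)}$ when $m=\Theta(pN)$ (and the cruder bound $\binom{N}{m}p^m\le 2^{N}e^{-m\log(1/p)}$ for the remaining ranges of $m$). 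The number of choices of clusters, of $n'$, and of $\mathbf m$ is at most $(k+1)^{n}\cdot n\cdot((n')^2+1)^{e(H)}$, which is absorbed once $\epsilon$ (hence $\beta$) is taken small enough in terms of $H$ and $k$, because $\sum_{ij}m_{ij}=\Omega(pn^2)\to\infty$. Hence the total expectation is $o(1)$, and by Markov w.h.p.\ no such $G$ exists -- which is exactly Part~1.

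For Part~2 the lower bound $G(H)\ge(1-\delta)\prod(m_{ij}/(n')^2)(n')^{v(H)}$ comes from the same transference applied to the strictly balanced form of the theorem (with $\delta_0=\delta$), and this is the only place strict balancedness is truly needed on the lower side. The matching upper bound $G(H)\le(1+\delta)\prod(m_{ij}/(n')^2)(n')^{v(H)}$ I would prove directly by embedding the vertices of $H$ one at a time: writing $V(H)=\{1,\dots,v(H)\}$, the number of ways to extend a partial canonical embedding of $H[\{1,\dots,r\}]$ to vertex $r+1$ is, by $(\epsilon,p)$--regularity of each back--pair $i(r+1)$, at most $\prod_{i(r+1)\in E(H)}\bigl(\frac{m_{i,r+1}}{(n')^2}+\epsilon p\bigr)\cdot n'$, provided the candidate set for every earlier vertex still has size $\Theta(n')$; multiplying over the steps yields the bound with $1+O(\epsilon/d)$ in place of $1+\delta$. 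The point where I would again invoke strict balancedness is precisely to guarantee that the candidate sets never shrink below a constant fraction of $n'$: this follows from the lower bound together with two w.h.p.\ features of $G(n,p)$ for $p\ge Cn^{-1/m_2(H)}$, namely that it is $(\eta,p,D)$--upper--uniform (no $\eta n\times\eta n$ sub--pair carries density much above $p$, so no clustered dense spot can inflate the count) and that, by strict balancedness, every proper subgraph $H'\subsetneq H$ already has its canonical--copy count concentrated, so no subconfiguration dominates.

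The genuine obstacle is the K\L R theorem itself: once one has the container--type bound on the number of $H$--poor $(\epsilon,p)$--regular configurations -- the substance of \cite{CGSS} -- the transference above is routine bookkeeping. Within the transference, the one delicate point is the upper bound in Part~2 and the necessity of strict balancedness: without it a configuration can contain a locally dense piece carrying more than $\prod(m_{ij}/(n')^2)(n')^{v(H)}$ copies of $H$, and only the one--sided estimate of Part~1 survives.
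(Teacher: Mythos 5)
The paper does not actually prove this proposition---it is imported verbatim (as the bracketed citation indicates) from Conlon, Gowers, Samotij and Schacht \cite{CGSS}, where it appears as a corollary of their resolution of the K\L R conjecture. So there is no in-paper proof against which to compare your sketch; what you have written is a reconstruction of the argument in the cited source.

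As such a reconstruction, your outline of Part~1 is essentially correct and matches the CGSS strategy: the K\L R theorem gives the counting bound $\beta^{\sum m_{ij}}\prod\binom{(n')^2}{m_{ij}}$ on the number of $H$-poor configurations, each such configuration lands in $G(n,p)$ with probability $p^{\sum m_{ij}}$, and after the estimate $\binom{(n')^2}{m_{ij}}p^{m_{ij}}\le(e/d)^{m_{ij}}$ (valid because $m_{ij}\ge dp(n')^2$, while an upper-uniformity/Chernoff argument disposes of $m_{ij}\gg p(n')^2$) the resulting $(\beta e/d)^{\sum m_{ij}}$ beats the $(k+1)^n\cdot\mathrm{poly}(n)$ union bound since $\sum m_{ij}=\Omega(pn^2)=\Omega(n)$ for $p\ge Cn^{-1/m_2(H)}$. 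One small slip in bookkeeping: the proposition as stated in the paper suppresses the quantifier on $\epsilon$ (it should be existentially quantified alongside $\xi$ as a function of $H$, $\delta$, $d$); you implicitly treat it correctly.

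Part~2 is where your sketch is thinner. The lower bound does follow exactly as in Part~1, using the two-sided strictly-balanced version of the K\L R counting statement. But your proposed upper-bound argument---embedding vertices one at a time and multiplying the candidate-set sizes $\bigl(m_{i,r+1}/(n')^2+\epsilon p\bigr)n'$---is not quite sound as written: $(\epsilon,p)$-regularity controls densities between \emph{large} sets, not the degree of a \emph{single} embedded vertex, so a small but nonzero fraction of partial embeddings can have candidate sets much larger than the generic $\Theta(p^{\deg}n')$ size, and these exceptional branches can in principle dominate the count. One has to argue (using upper-uniformity of $G(n,p)$ and the hierarchy of thresholds for subgraphs of a strictly balanced $H$) that the contribution of such ``dense spots'' is $o(1)$ relative to the main term; you gesture at this at the end but do not make it precise. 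This is the genuinely delicate part of the two-sided statement and is where the CGSS machinery (not mere regularity bookkeeping) is actually doing work.
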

	
	Note that the first part tells us that if $G$ is a subgraph of $G(n,p)$
	in $\mathcal{G}(H,n',\textbf{m},\epsilon,p)${,} then it { contains} at
	least one copy of $H$ with vertex $i$ in $V_i$.
	
	\color{black} We can now proceed to the proof of Lemma
	\ref{lem:BigValuesOfp}, starting with a sketch of the argument.
	\color{black} Note that the same steps can be applied to determine
	$ex(G(n,p),T,H)$ for graphs $T$ and $H$ for which
	$ex(n,T,H)=\Theta(n^{v(T)})$ and $p\gg
	\max\{n^{-1/m_2(H)},n^{-1/m_2(T)}\}$.
	
	Let $G$ be an $H$-free subgraph of $G(n,p)$ maximizing the number of
	copies of $K_m$. First apply the sparse regularity lemma (Theorem
	\ref{thm:randReg}) to $G$ and observe using Chernoff and properties of
	the regular partition that there are only a few edges inside clusters
	and between sparse or irregular pairs. By lemma \ref{lem:CopiesOnEdge}
	these edges  do not contribute significantly to the count of $K_m$. We
	can thus consider only graphs $G$ which do not have such edges.
	
	By Proposition \ref{prop:CountingH} the cluster graph must be $H$-free
	and taking $G$ to be maximal we can assume all pairs in the cluster
	graph have the maximal possible density. Applying Proposition
	\ref{prop:CountingH} again to count the number of copies of $K_m$
	reduces the problem to the dense case solved in \cite{ASh}.
	
	We continue with the full details of the proof. \color{black}

	\begin{proof}[Proof of Lemma \ref{lem:BigValuesOfp}]

		{ A $(k-1)$-partite graph with sides of size $\frac{n}{k-1}$
			\color{black} each \color{black} is an $n$-vertex $H$-free graph
			containing} $(1+o(1))\binom{k-1}{m} (\frac{n}{k-1})^m$ copies of
		$K_m$. 
		{W}e can get a random subgraph of it by keeping each edge with
		probability $p$, independently of the other edges. 
		Then by Lemma \ref{lem:CopiesOfK_mInSided} the number of copies of
		$K_m$ in it is $(1+o(1))\binom{k-1}{m}(\frac{n}{k-1})^m
		p^{\binom{m}{2}}$ \color{black} w.h.p.,  \color{black} proving the
		required lower bound on $ex(G(n,p),K_m,H)$.
		


		For the upper bound we need to show that {no} $H$-free subgraph of
		$G(n,p)$ { has} more than $(1+o(1))\binom{k-1}{m}(\frac{n}{k-1})^m
		p^{\binom{m}{2}}$ copies of $K_m$.  Let $G$ be an $H$-free subgraph of
		$G(n,p)$ with the maximum number copies of $K_m$. To use Theorem
		\ref{thm:randReg}{,} we need to show that $G$ is
		$(\eta,p,D)$-upper-uniform for some constant {$D$}, say $D=2$, and
		$\eta>0$. Indeed, taking any two disjoint subsets $V_1,V_2$ of size
		$\geq \eta n${,} we get that the number of edges between them is
		bounded by the number of edges between them in $G(n,p)$, which is
		distributed like $Bin(|V_1|\cdot |V_2|,p)$. Applying {P}art
		\ref{lem:ChernoffUppeBigA} of Lemma \ref{lem:Chernoff} and the union
		bound gives us that w.h.p.\ the number of edges between any two such
		sets is $\leq 2|V_1|\cdot |V_2|p$ and so $d(V_1,V_2)<2p$ as needed.
		Thus by Theorem \ref{thm:randReg}{,} $G$ admits an
		$(\epsilon,p)$-regular partition into $t$ parts $V(G)=V_1 \cup \dots
		\cup V_t$.
		
		Define the {\em cluster graph of $G$} to be the graph whose vertices
		are the sets $V_i$ of the partition and there is an edge between two
		sets if the density of the bipartite graph induced by them is at least
		$\delta p$ for some fixed small $\delta>0$\color{black}, \color{black}
		and they form an $(\epsilon,p)$-regular pair.
		
		First we show that w.h.p.\ the cluster graph is $H$-free. Assume that
		there is a copy of $H$ in the cluster graph, { induced} by the sets
		$V_1,\dots,V_{v(H)}$. \color{black}Consider these \color{black} sets
		in the original graph $G$. To apply {P}art \ref{prop:NumOfUnbH} of
		Proposition \ref{prop:CountingH} first note that indeed $p\geq
		Cn^{-1/m_2(H)}$. Furthermore if $ij\in E(H)$ then by the definition of
		the cluster graph $V_i$ and $V_j$ form an $(\epsilon,p)$-regular pair
		and there are at least $\delta p(\frac{n}{t})^2$ edges between them.
		Thus the graph spanned by the {edges} between $V_1,\dots,V_{v(H)}$ in
		$G$ is in $\mathcal{G}(H,\frac{n}{t},\textbf{m},\epsilon,p)$ where
		$m_{ij}\geq \delta p(\frac{n}{t})^2$, and so w.h.p.\ it contains a
		copy of $H$ with vertex $i$ in the set $V_i$. This contradicts the
		fact that $G$ was $H$-free to start with.
		
		If the cluster graph is indeed $H$-free, as proven in \cite{ASh},
		Proposition 2.2,  since $\chi(H)>m$ then
		$ex(t,K_m,H)=(1+o(1))\binom{k-1}{m}(\frac{t}{k-1})^m$. This gives a
		bound on the number of copies of $K_m$ in the cluster graph. For sets
		$V_1,...,V_m$ that span a copy of $K_m$ in the cluster graph we would
		like to bound the number of copies of $K_m$ with a vertex in each set
		in the original graph $G$.
		
		To do this{,} we use {P}art \ref{prop:NumOfBalH} of Proposition
		\ref{prop:CountingH}. Note that we cannot use Lemma
		\ref{lem:CopiesOfK_mInSided} as we need it for every subgraph of
		$G(n,p)$ and not only for a specific one. Part \ref{prop:NumOfBalH}
		can be applied only to balanced graphs, and indeed any subgraph of
		$K_m$ is $K_{m'}$ for some $m'<m$ and
		$m_2(K_{m'})=\frac{m'+1}{2}<\frac{m+1}{2}=m_2(K_m)$. As we would like
		to have a upper bound on the number of copies of $K_m$ with a vertex
		in each set, we can assume that the bipartite graph between $V_i$ and
		$V_j$ has all of the edges from $G(n,p)$.
		
		\color{black}By  {P}arts \ref{lem:ChernoffLower} and
		\ref{lem:ChernoffUppe} of Lemma \ref{lem:Chernoff}, \color{black}
		w.h.p.\ for any $V_i$ and $V_j$ of size $\frac{n}{t}$,
		$|E(V_i,V_j)|=(1+o(1))p(\frac{n}{t})^2$. Thus the graph { induced} by
		the sets $V_1,...,V_m$ in $G(n,p)$ is in
		$\mathcal{G}(K_m,\frac{n}{t},\textbf{m},\epsilon,p)$ where $m_{ij}=
		(1+o(1))p(\frac{n}{t})^2$ for any pair $ij$. From this the number of
		copies of $K_m$ in $G$ with a vertex in every $V_i$ is at most
		$\color{black}(1+o(1))\color{black} p^{\binom{m}{2}} (\frac{n}{t})^{m}
		$. Plugging this into the bound on the number of copies of $K_m$ in
		the cluster graph \color{black} implies \color{black} that the number
		of copies of $K_m$ coming from \color{black} copies \color{black} of
		$K_m$ in the cluster is w.h.p.\ at most
		
		%

		
		$$ (1+o(1))\binom{k-1}{m} (\frac{t}{k-1})^m \cdot
		p^{\binom{m}{2}}(\frac{n}{t})^m=(1+o(1))\binom{k-1}{m}
		(\frac{n}{k-1})^m \cdot p^{\binom{m}{2}}{.}$$
		
		It is left to show that the number of copies of $K_m$ coming from
		other parts of the graph is negligible.

		To do this we show that the number of edges inside clusters and
		between non-dense or irregular pairs is negligible. By Chernoff (Part
		\ref{lem:ChernoffUppeBigA} of Lemma \ref{lem:Chernoff}) the number of
		edges inside a cluster is at most $2p\binom{n/t}{2} t\leq
		2p\frac{n^2}{t}$. The number of irregular pairs is at most $\epsilon
		t^2$, and again by Chernoff there are no more than
		$2p(\frac{n}{t})^2\cdot \epsilon t^2 =2\epsilon pn^2$ edges between
		\color{black} these \color{black} pairs. Finally, the number of edges
		between non-dense pairs is at most $\delta p (\frac{n}{t})^2
		t^2=\delta pn^2$.
		
		As $\epsilon$, $\delta$ and $\frac{1}{t}$ can be chosen as small as
		needed we get that  the number of such edges is $o(n^2p)$ Thus we may
		apply Lemma \ref{lem:CopiesOnEdge} and conclude that the number of
		copies of $K_m$ \color{black}containing at least one of these edges is
		\color{black}  $o(n^m p^{\binom{m}{2}})$.

		\color{black}Therefore, \color{black} 
		for any $H$-free $G\subset G(n,p)$ the number of copies of $K_m$ in
		$G$ is at most $ (1+o(1))\binom{k-1}{m} (\frac{n}{k-1})^m
		p^{\binom{m}{2}}$ as needed. \end{proof}
	

	
	The proofs of the other two lemmas are a bit simpler.
	

	\begin{proof}[Proof of Lemma \ref{lem:SmallValuesOfp}] \color{black} As
		$p<n^{-1/m_2(K_m)-\delta}$ we can first delete all copies of $K_m$
		sharing an edge \color{black} with other copies \color{black} and by
		Lemma \ref{lem:oneK_mPerEdge} we deleted w.h.p.\ only
		$o(n^mp^{\binom{m}{2}})$ copies of $K_m$. 
		Let $H'$ be a subgraph of $H$ for which
		$\frac{e(H')-1}{v(H')-1}=m_2(H)$. Let $e$ be an edge of $H'$ and
		define $\{H_i\}$ to be the family of \color{black} all \color{black}
		graphs obtained by gluing a copy of $K_m$ to the edge $e$ in $H'$ and
		allowing any further intersection. Note that the number of graphs in
		$\{H_i\}$ depends only on $H'$ and $m$. One can make $G$ into an
		$H$-free graph by deleting the edge $e$ from every copy of a graph
		from  $\{H_i\}$ and every edge that does not take part in a copy of
		$K_m$. As we may assume every edge takes part in at most one copy of
		$K_m$ it is enough to show that the number of copies of graphs from
		$\{H_i\}$ is $o(n^m p^{\binom{m}{2}})$.
		
		For a fixed graph $J$, let $X_J$ be the random variable counting the
		number of copies of $J$ in $G\sim G(n,p)$. With this notation{,}
		\begin{align*} \E(X_{K_m})&=\Theta(n^{m}p^{\binom{m}{2}})=\Theta(n^2 p
		(np^{m_2(K_m)})^{m-2}){,}\\ \E(X_{H_i})&=\Theta (n^2p
		(np^{m_2(H_i)})^{v(H_i)-2}){.} \end{align*}
		
		As $m_2(H_i)\geq m_2(K_m)$ and $p<n^{-1/m_2(K_m)}${,} we get
		$np^{m_2(H_i)}\leq np^{m_2(K_m)}\color{black}\ll\color{black}1$.
		Furthermore as $v(H_i)>m$ (otherwise $H'$ would be a subgraph of
		$K_m$) we get that
		$(np^{m_2(H_i)})^{v(H_i)-2}=o((np^{m_2(K_m)})^{m-2})$ and thus
		$\mathbb{E}(X_{H_i})=o(\mathbb{E}(X_{K_m})$.
		
		If $p$ is such that the expected number of copies of $K_m$, the graphs
		$\{H_i\}$ and any of their subgraphs goes to infinity as $n$ goes to
		infinity we can apply Theorem \ref{lem:ExpectedNumLemma} and get that
		$X_{K_m}=(1+o(1))\binom{n}{m}p^{\binom{m}{2}}$ and the number of
		copies of $H_i$ is \color{black}w.h.p.\ \color{black}
		$(1+o(1))\E(X_{H_i})=o(\E (X_{K_m}))$. Thus if we remove all edges
		playing the part of $e$ in any $H_i$ the number of copies of $K_m$
		will still be $(1+o(1))\binom{n}{m}p^{\binom{m}{2}}$.

		Finally, if the number of copies of some subgraph of $H_i$ does not
		tend to infinity as $n$ tends to infinity we can remove all of the
		edges taking part in it, and the number of edges removed is
		$o(\binom{n}{m}p^{\binom{m}{2}})$. As each edge takes part in a single
		copy of $K_m$, we still get that the number of copies of $K_m$ in this
		graph is $(1+o(1))\binom{n}{m}p^{\binom{m}{2}}$\color{black},
		\color{black} as needed. \end{proof} \color{black} 
	
	\color{black} 
	\begin{proof}[Proof of Lemma \ref{lem:MidRangeForEasyOrd}] Let
		$n^{-1/m_2(K_m)-\epsilon}<p\ll n^{-1/m_2(H)}$ and $G\sim G(n,p)$. Let
		$H'$ be a subgraph of $H$ for which $\frac{e(H')-1}{v(H')-2}=m_2(H)$.
		We show that if $G$ is made $H$-free by removing a single edge from
		every copy of $H'$ then the number of copies of $K_m$ deleted is
		$o(\binom{n}{m}p^{\binom{m}{2}})$. Theorem \ref{lem:ExpectedNumLemma}
		assures us that the number of copies of $K_m$ in $G$ is
		$(1+o(1))\binom{n}{m}p^{\binom{m}{2}}$ and so it stays essentially the
		same after removing all copies of $H'$.

		The expected number of copies of $H'$ in $G$ is
		$$\E[\mathcal{N}(G,H')]=\Theta (n^2p
		(np^{m_2(H')})^{v(H')-2})=o(n^2p).$$ Thus by Markov's inequality
		w.h.p.\ $\mathcal{N}(G,H')=o(n^2p)$. If $p\gg n^{-1/m_2(K_m)}$ then by
		Lemma \ref{lem:CopiesOnEdge} deleting all these edges removes only
		$o(n^m p^{\binom{m}{2}})$ copies of $K_m$.
		
		As for smaller values of $p$, namely $p\leq O(n^{-1/m_2(K_m)})$, it
		follows that $$\E[\mathcal{N}(G,H')]=\Theta (n^2p
		(np^{m_2(H')})^{v(H')-2})\leq n^{-\beta}n^2p$$ for some $\beta>0$. By
		Markov's inequality w.h.p.\ the number of edges taking part in a copy
		of $H'$ in $G$ is at most $n^{-\alpha}n^2p$ for, say,
		$\alpha=\frac{\beta}{2}$.
		
		Since $p\leq O(n^{-1/m_2(K_m)})$ Lemma \ref{lem:CopiesOnEdge} cannot
		be applied directly. To take care of this, define $q=n^{2\epsilon}p\gg
		n^{-1/m_2(K_m)}$. Lemma \ref{lem:CopiesOnEdge} applied to $G(n,q)$
		implies that a set of at most $n^{-\alpha}qn^2$ edges takes part in no
		more than $n^{-\delta}n^m q^{\binom{m}{2}}$ copies of $K_m$ , where
		$\delta=\delta(\alpha)>0$.
		
		The number of copies of $K_m$ containing a member of a set of edges in
		$G(n,p)$ is monotone in $p$ and in the size of the set. Thus when
		deleting a single edge from each copy of $H'$ in $G(n,p)$ the number
		of copies of $K_m$ removed is w.h.p.\ at most
		$n^{-\delta}n^mq^{\binom{m}{2}}=n^{-\delta+2\binom{m}{2}\epsilon}n^mp^{\binom{m}{2}}$. Choosing $\epsilon$ small enough implies that the number of copies of $K_m$ removed is $o(n^m p^{\binom{m}{2}})$ as needed. \end{proof} \color{black}

	\section{Construction of graphs with small 2-density}
	
	In the proof of Theorem \ref{thm:existOfH} we construct a family of
	graphs $\{G(k,\epsilon)\}$ that are $k$-critical and
	$m_2(G(k,\epsilon))=(1+\epsilon)M_k$ where $M_k$ is the smallest
	possible value of $m_2$ for a $k$-chromatic graph. 
	{The following notation will be useful.} For a graph $G$ and
	$A\subseteq V(G)$ such that $|A|\geq 3$, let
	$d^{(2)}_G(A)=\frac{e(G[A])-1}{|A|-2}$. 
	{By definition, } $m_2(G)=\min_{A\subseteq V(G)\,:\, |A|\geq
		3}d^{(2)}_G(A)$.

	\begin{proof}[Proof of Theorem \ref{thm:existOfH}]
		
		We construct the graphs $G(k,\epsilon)$ in three steps. In Step 1 we
		construct so called $(k,t)$-{\em towers} and derive some useful
		properties of them. In Step 2 we make from $(k,t)$-{\em towers} more
		complicated $(k,t)$-{\em complexes} and {\em supercomplexes}, and in
		Step 3 we replace each edge in a copy of $K_k$ with a supercomplex and
		prove the needed.

		\subsubsection*{Step 1: Towers} Let $t=t(\epsilon)=\lceil
		k^3/\epsilon\rceil$. The $(k,t)$-{\em tower with base
			$\{v_{0,0}v_{0,1}\}$} is the graph $T_{k,t}$ defined as follows. The
		vertex set of $T_{k,t}$ is $V_0\cup V_1\cup\ldots \cup V_{t}$, where
		$V_0=\{v_{0,0},v_{0,1}\}$ and for $1\leq i\leq t$,
		$V_i=\{v_{i,0},v_{i,1},\ldots,v_{i,k-2}\}$. For $i=1,\ldots,t$,
		$T_{k,t}[V_i]$ induces $K_{k-1}-e$ with the missing edge
		$v_{i,0}v_{i,1}$. Also for $i=1,\ldots,t$, vertex $v_{i-1,0}$ is
		adjacent to $v_{i,j}$ for all  $0\leq j\leq (k-2)/2$ and vertex
		$v_{i-1,1}$ is adjacent to $v_{i,j}$ for all  $(k-1)/2\leq j\leq k-2$.
		There are no other edges.

		\begin{figure} [H] \centering

			\begin{tikzpicture}[scale=1, transform shape] \tikzstyle{every node}
			= [circle, minimum width=8pt, inner sep=0pt] \node (v00) at (1.5, 0)
			{$v_{0,0}$}; \node (v01) at +(2.5, 0) {$v_{0,1}$};
			
			\node (v10) at +(1,1) {$v_{1,0}$}; \node (v11) at +(2,1 )
			{$v_{1,1}$}; \node (v12) at +(3,1 ) {$v_{1,2}$};
			
			\node (v20) at +(1,2) {$v_{2,0}$}; \node (v21) at +(2,2 )
			{$v_{2,1}$}; \node (v22) at +(3,2 ) {$v_{2,2}$};

			\node (v30) at +(1,3) {}; \node (v31) at +(2,3 ) {}; \node (v32) at
			+(3,3 ) {};
			
			\foreach \from/\to in { v00/v10, v00/v11} \draw [-] (\from) edge
			(\to); \path[-] (v01) edge (v12); \foreach \from/\to in { v11/v12}
			\draw [-] (\from) -- (\to); \path[-] (v12)    edge[bend right=30]
			(v10);

			\foreach \from/\to in { v10/v20, v10/v21} \draw [-] (\from) -- (\to);
			\path[-] (v11) edge (v22);
			
			\foreach \from/\to in { v21/v22} \draw [-] (\from) -- (\to); \path[-]
			(v22)    edge[bend right=30] (v20);
			
			\foreach \from/\to in { v20/v30, v20/v31} \draw [-] (\from) edge
			(\to); \path[-] (v21) edge (v32);

			\draw (2,3) circle (0.02cm); \fill (2,3) circle (0.02cm);
			
			\draw (2,3.5) circle (0.02cm); \fill (2,3.5) circle (0.02cm);
			
			\draw (2,4) circle (0.02cm); \fill (2,4) circle (0.02cm);
			
			\node (v40) at +(1,4) {}; \node (v41) at +(2,4 ) {}; \node (v42) at
			+(3,4 ) {};
			
			\node (v50) at +(1,5) {$v_{t,0}$ }; \node (v51) at +(2,5 )
			{$v_{t,1}$}; \node (v52) at +(3,5 ) {$v_{t,2}$};

			\foreach \from/\to in { v40/v50, v40/v51} \draw [-] (\from) -- (\to);
			\path[-] (v41) edge (v52);
			
			\foreach \from/\to in { v51/v52} \draw [-] (\from) -- (\to); \path[-]
			(v52)    edge[bend right=30] (v50);

			\end{tikzpicture}
			
			\caption{$T_{4,t}$} \label{fig:M1} \end{figure}

		\medskip By construction,
		$|E(T_{k,t})|=t\left(\binom{k-1}{2}-1+(k-1)\right)=t\frac{(k+1)(k-2)}{2}=(|V(T_{k,t})|-2)\frac{(k+1)(k-2)}{2(k-1)}$, that is, \begin{equation}\label{equ1} d^{(2)}_{T_{k,t}}(V(T_{k,t}))=\frac{(k+1)(k-2)}{2(k-1)}-\frac{1}{|V(T_{k,t})|-2}. \end{equation} Also, since for each $i=1,\ldots,t$, $|N(v_{i-1,1})\cap V_i|\leq (k-1)/2$ and among the $\lceil (k-1)/2\rceil$ neighbors of $v_{i-1,0}$ in $V_i$, $v_{i,0}$ and $v_{i,1}$ are not adjacent to each other, \begin{equation}\label{equ2} \omega(T_{k,t})=k-2. \end{equation}

		Our first goal  is to show that $T_{k,t}$ has no dense subgraphs. We
		will use the language of potentials to prove this. For a graph $H$ and
		$A\subseteq V(H)$, let $$\mbox{
			$\rho_{k,H}(A)=(k+1)(k-2)|A|-2(k-1)|E(H[A])|$ be the {\em potential of
				$A$ in $H$}.}$$ A convenient property of potentials is that if
		$|A|\geq 3$, then \begin{equation}\label{equ10}
		\mbox{$\rho_{k,H}(A)\geq 2(k+1)(k-2)-2(k-1)$ if and only if
			$d^{(2)}_{H}(A)\leq \frac{(k+1)(k-2)}{2(k-1)}$,} \end{equation} but
		potentials are also well defined for sets with cardinality  two or
		less.
		
		\begin{lemma}\label{cl1} Let $T=T_{k,t}$. For every $A\subseteq V(T)$,
			
			\begin{equation}\label{equ3} \mbox{if $|A|\geq 2$, then
				$\rho_{k,T}(A)\geq 2(k+1)(k-2)-2(k-1)$.} \end{equation} Moreover,
			\begin{equation}\label{equ4} \mbox{if $V_0\subseteq A$, then
				$\rho_{k,T}(A)\geq 2(k+1)(k-2)$.} \end{equation} \end{lemma}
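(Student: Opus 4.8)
The plan is an induction on $|A|$ driven by the additivity of the potential: for any $S\subseteq A\subseteq V(T)$,
\[
\rho_{k,T}(A)=\rho_{k,T}(A\setminus S)+(k+1)(k-2)|S|-2(k-1)e_A(S),
\]
where $e_A(S)$ is the number of edges of $T[A]$ meeting $S$. The two base facts are immediate: if $|A|=2$ then $e(T[A])\le 1$, so $\rho_{k,T}(A)\ge 2(k+1)(k-2)-2(k-1)$ (with equality exactly on an edge); and since $V_0$ is independent, $\rho_{k,T}(V_0)=2(k+1)(k-2)$, which is the base case for (\ref{equ4}). To prove (\ref{equ4}) I would then simply descend: if $V_0\subseteq A$ and $j=\max\{i: V_i\cap A\neq\emptyset\}\ge 1$, remove $S=V_j\cap A$; the key claim below shows this does not decrease the potential, and since $V_0\subseteq A\setminus S$ the descent stays in the inductive regime (in particular never reaches a set of size $\le 1$) and terminates at $A=V_0$, giving $\rho_{k,T}(A)\ge \rho_{k,T}(V_0)=2(k+1)(k-2)$.

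The heart of the matter is the claim that, with $j$ and $S=V_j\cap A$ as above and $a=|S|$, the contribution $(k+1)(k-2)a-2(k-1)e_A(S)$ is nonnegative. Here one uses that $V_j$ is the top layer met by $A$: every edge of $T[A]$ leaving $S$ runs down to $V_{j-1}$, each vertex of $V_j$ has exactly one neighbour in $V_{j-1}$, and there are only $k-1$ edges between $V_j$ and $V_{j-1}$ in all of $T$. If $a\le k-2$, bounding $e(T[S])\le\binom a2$ and the number of downward edges by $a$ gives contribution at least $a\bigl((k+1)(k-2)-(k-1)(a+1)\bigr)$, which is $\ge 0$ because $(k+1)(k-2)/(k-1)=k-\tfrac{2}{k-1}\ge k-1\ge a+1$ for $k\ge 4$. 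If $a=k-1$ then $S=V_j$ and $e(T[S])=\binom{k-1}{2}-1$ exactly, and a one-line computation turns the contribution into $2(k-1)(k-1-b)\ge 0$, where $b\le k-1$ is the number of downward edges. Hence $\rho_{k,T}(A)\ge\rho_{k,T}(A\setminus S)$ always, and whenever $|A\setminus S|\ge 2$ the induction hypothesis closes (\ref{equ3}).

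The only genuinely delicate point is the boundary of (\ref{equ3}): when removing $S$ leaves a set of size $0$ or $1$, i.e.\ when $A$ lies inside $V_j$, or inside $V_j$ together with one extra vertex. If $A\subseteq V_j$ then $T[A]\subseteq K_{k-1}-e$, and writing $f(a)=(k+1)(k-2)a-(k-1)a(a-1)$, the concavity of $f$ together with $f(2)=2(k+1)(k-2)-2(k-1)$ and the checks $f(k-2)\ge f(2)$ and $f(k-1)+2(k-1)\ge f(2)$ — the extra $2(k-1)$ being the gain from the absent edge $v_{j,0}v_{j,1}$ when $A$ contains both its ends — yields the bound for $k\ge 4$. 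If $A=(V_j\cap A)\cup\{v\}$ with $v\notin V_j$, then $v$ has neighbours in $S$ only if $v\in\{v_{j-1,0},v_{j-1,1}\}$, and then at most $\lceil(k-1)/2\rceil$ of them; for $k$ odd this is already $\le \tfrac{(k+1)(k-2)}{2(k-1)}$ so deleting $v$ suffices, and for $k$ even the single tight case, namely $v$ adjacent to all $k/2$ vertices of $N(v)\cap V_j$, forces $v_{j,0},v_{j,1}\in A$ and is again settled by the extra $2(k-1)$. I expect this finite list of arithmetic verifications — tight at $k=4$, which is exactly where the hypothesis $k\ge 4$ is needed — to be the main nuisance; the conceptual work is all in the contribution-$\ge 0$ step, and that step is tight precisely because a full layer with full downward connection contributes exactly $0$, so that $\rho_{k,T}(V(T))=2(k+1)(k-2)$, matching (\ref{equ1}).
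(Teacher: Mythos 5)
Your proposal is correct and follows essentially the same route as the paper: induction (or, equivalently, a minimal counterexample) on $|A|$, peeling off the top layer $S=A\cap V_j$ and splitting on whether $|S|\le k-2$ or $S=V_j$, with separate treatment of the boundary cases $A\subseteq V_j$ and $A=S\cup\{z\}$, where the parity of $k$ (and the freed-up nonedge $v_{j,0}v_{j,1}$ giving an extra $2(k-1)$) is used exactly as in the paper's Case~4. The arithmetic you outline — in particular the contribution estimates $a[(k+1)(k-2)-(k-1)(a+1)]\ge 0$ for $a\le k-2$, the identity $2(k-1)(k-1-b)$ when $S=V_j$, and the concavity check $(k-3)(k-4)\ge 0$ — all matches the paper's computations.
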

		
		{\bf Proof.}  Suppose  the lemma is not true. Among $A\subseteq V(T)$
		with $|A|\geq 2$ for which~\eqref{equ3} or~\eqref{equ4} does not hold,
		choose $A_0$ 
		with the smallest size. Let $a=|A_0|$.
		
		If $a=2$, then $\rho_{k,T}(A_0)=2(k+1)(k-2)-2(k-1)|E(T[A_0])|\geq
		2(k+1)(k-2) -2(k-1)$. Moreover, if $a=2$ and $V_0\subseteq A$, then
		$V_0=A_0$ and so $E(T[A_0])=\emptyset$. This contradicts the choice of
		$A_0$. So \begin{equation}\label{equ5} a\geq 3. \end{equation}

		Let $i_0$ be the maximum $i$ such that $A_0\cap V_i\neq\emptyset$.
		By~\eqref{equ5}, $i_0\geq 1$. Let $A'=A_0\cap V_{i_0}$ and $a'=|A'|$.
		
		\smallskip {\bf Case 1:} $a'\leq k-2$ and $a-a'\geq 2$. Since
		$|(A_0-A')\cap V_0|=|A_0\cap V_0|$, by the minimality of
		$a$,~\eqref{equ3} and~\eqref{equ4} hold for $A_0-A'$. Thus,
		$$\rho_{k,T}(A_0)\geq
		\rho_{k,T}(A_0-A')+a'(k+1)(k-2)-2(k-1)\left(a'+\binom{a'}{2}\right) $$
		$$=\rho_{k,T}(A_0-A')+a'\left[(k^2-k-2)-2k+2-(k-1)(a'-1)\right]. $$
		Since $k\geq 4$ and $a'\leq k-2$, the expression in the brackets is at
		least $k^2-3k-(k-1)(k-3)=k-3>0$, contradicting the choice of $A_0$.

		\smallskip {\bf Case 2:} $A'=V_{i_0}$  and ${a-a'}\geq 2$. Then
		$a'=k-1$. As in Case 1,~\eqref{equ3} and~\eqref{equ4} hold for
		$A_0-A'$. Thus, $$\rho_{k,T}(A_0)\geq
		\rho_{k,T}(A_0-A')+a'(k+1)(k-2)-2(k-1)\left(a'+\binom{a'}{2}-1\right)
		$$ $$=\rho_{k,T}(A_0-A')+(k-1)\left[(k^2-k-2)-2(k-1)-(k-1)((k-1)-1)+2\right] $$ $$\geq \rho_{k,T}(A_0-A')+(k-1)^2\left[(k-2)-(k-2)\right]=\rho_{k,T}(A_0-A'), $$ contradicting the minimality of $A_0$.
		
		\smallskip {\bf Case 3:} {$a=a'$, i.e.,} $A_0=A'$. Then
		$V_0\not\subseteq A_0$ and $ a'\geq 3$. If $a\leq k-2$, then
		\begin{equation}\label{equ7} \rho_{k,T}(A_0)\geq
		a(k+1)(k-2)-2(k-1)\binom{a}{2} =a[(k+1)(k-2)-(k-1)(a-1)].
		\end{equation} Since the RHS of~\eqref{equ7} is quadratic in $a$ with
		the negative leading coefficient, it is enough to evaluate the RHS
		of~\eqref{equ7} for $a=2$ and $a=k-2$. For $a=2$, it is
		$2(k+1)(k-2)-2(k-1)$, exactly as in~\eqref{equ3}. For $a=k-2$, it is
		$$(k-2)[(k+1)(k-2)-(k-1)(k-2-1)]=(3k-5)(k-2), $$ and $(3k-5)(k-2)\geq
		2(k+1)(k-2)-2(k-1)$ for $k\geq 4$. If $a=k-1$, then $A_0=V_i$ and
		$$\rho_{k,T}(A_0)=
		a(k+1)(k-2)-2(k-1)\left(\binom{a}{2}-1\right)=(k-1)((k+1)(k-2)-(k-1)(k-2)+2) $$ $$=2(k-1)^2>2(k+1)(k-2)-2(k-1). $$
		
		\smallskip {\bf Case 4:} ${a-a'}= 1$. As in Case 3, $V_0\not\subseteq
		A_0$ and $ a'\geq 2$. Let $\{z\}=A_0-A'$. Repeating the argument of
		Case 3, we obtain that $\rho_{k,T}(A')\geq 2(k+1)(k-2)-2(k-1)$. So, if
		$d_{T[A_0]}(z)\leq \frac{k-1}{2}$, then $$\rho_{k,T}(A_0)\geq
		\rho_{k,T}(A')+(k+1)(k-2)-2(k-1)\frac{k-1}{2}=\rho_{k,T}(A')+k-3>\rho_{k,T}(A'), $$ a contradiction to the choice of $A_0$. And the only way that $d_{T[A_0]}(z)> \frac{k-1}{2}$, is that $z=v_{i-1,0}$, $k$ is even, and $A'\supseteq \{v_{i,0},\ldots,v_{i,(k-2)/2}\}$. Then edge $v_{i,0}v_{i,1}$ is missing in $T[A']$, and hence \begin{equation}\label{equ8} \rho_{k,T}(A_0)=(a'+1)(k+1)(k-2)-2(k-1)\left(\binom{a'}{2}-1+k/2\right). \end{equation} Since the RHS of~\eqref{equ8} is quadratic in $a'$ with the negative leading coefficient and $a'\geq k/2$, it is enough to evaluate the RHS of~\eqref{equ8} for $a'=k/2$ and $a'=k-1$. For $a'=k/2$, it is $$\frac{k+2}{2}(k+1)(k-2)-(k-1)\left(\frac{k(k-2)}{4}-2+k\right)=\frac{k-2}{4}(k^2+3k+8). $$ Since $\frac{k^2+3k+8}{4}>2k$ for $k\geq 4$ and $2k(k-2)>2(k+1)(k-2)-2(k-1)$, we satisfy~\eqref{equ3}. If $a'=k-1$, then the RHS of~\eqref{equ8} is $$ k(k+1)(k-2)-(k-1)[(k-1)(k-2)-2+k]=(k-2)[k(k+1)-(k-1)^2-k+1]$$ $$=2k(k-2)>2(k+1)(k-2)-2(k-1).\qed$$
		
		{Graph} $T_{k,t}$ also has good coloring properties.
		
		\begin{lemma}\label{lem2} Suppose $T_{k,t}$ has a $(k-1)$-coloring $f$
			such that \begin{equation}\label{e3} f(v_{0,1})=f(v_{0,0}).
			\end{equation} Then for every $1\leq i\leq t$,
			\begin{equation}\label{e4} f(v_{i,1})=f(v_{i,0}). \end{equation}
		\end{lemma}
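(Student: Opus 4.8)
The plan is a straightforward induction on $i$, with the hypothesis~\eqref{e3} as the base case. Fix $1\le i\le t$ and suppose $f(v_{i-1,1})=f(v_{i-1,0})=:c$; this is exactly~\eqref{e3} when $i=1$, and it is the inductive hypothesis~\eqref{e4} at level $i-1$ when $i\ge 2$. The goal is to deduce $f(v_{i,1})=f(v_{i,0})$.

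The one structural input is that $N(v_{i-1,0})\cap V_i$ and $N(v_{i-1,1})\cap V_i$ together cover all of $V_i$. This is immediate from the construction of $T_{k,t}$: $v_{i-1,0}$ is joined to $v_{i,j}$ for $0\le j\le (k-2)/2$ and $v_{i-1,1}$ to $v_{i,j}$ for $(k-1)/2\le j\le k-2$, and a short check on the parity of $k$ shows that these two index ranges are disjoint and together exhaust $\{0,1,\dots,k-2\}$ (one gets $k/2$ and $k/2-1$ neighbours respectively when $k$ is even, and $(k-1)/2$ and $(k-1)/2$ when $k$ is odd). Since both $v_{i-1,0}$ and $v_{i-1,1}$ have colour $c$, every vertex of $V_i$ has a neighbour coloured $c$, so $c$ appears on no vertex of $V_i$; thus the $k-1$ vertices of $V_i$ are coloured using at most $k-2$ colours.

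Now I invoke the fact that $T_{k,t}[V_i]=K_{k-1}-v_{i,0}v_{i,1}$, whose only independent set of size at least $2$ is $\{v_{i,0},v_{i,1}\}$. By the pigeonhole principle, $f$ restricted to $V_i$ has a colour class containing at least two vertices; this class is independent in $T_{k,t}[V_i]$, hence it must equal $\{v_{i,0},v_{i,1}\}$, giving $f(v_{i,0})=f(v_{i,1})$, which is~\eqref{e4} at level $i$ and closes the induction. There is no genuine obstacle in this argument; the only point requiring (minor) care is the parity bookkeeping that confirms $N(v_{i-1,0})\cap V_i$ and $N(v_{i-1,1})\cap V_i$ exhaust $V_i$, together with the elementary remark that $K_{k-1}-e$ has a unique non-adjacent pair, so that being forced to reuse a colour on the $k-1$ vertices of $V_i$ pins that reuse down to exactly $v_{i,0}$ and $v_{i,1}$.
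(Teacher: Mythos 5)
Your proof is correct and follows essentially the same inductive argument as the paper: use that $V_i$ is covered by $N(v_{i-1,0})\cup N(v_{i-1,1})$ to exclude the common color of $v_{i-1,0},v_{i-1,1}$ from $V_i$, then pigeonhole on the $k-1$ vertices of $V_i$ and use that the unique non-adjacent pair in $T_{k,t}[V_i]\cong K_{k-1}-e$ is $\{v_{i,0},v_{i,1}\}$. The only difference is that you spell out the parity check and the final pigeonhole step a bit more explicitly than the paper does.
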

		
		{\bf Proof.} We prove~\eqref{e4} by induction on $i$. For $i=0$, this
		is~\eqref{e3}. Suppose~\eqref{e4} holds for $i=j<t$. Since
		$V_{j+1}\subseteq N(v_{i,0})\cup N(v_{i,1})$, the color
		$f(v_{j,1})=f(v_{j,2})$ is not used on $V_{j+1}$ and thus
		$f(v_{j+1,1})=f(v_{j+1,0})$, as claimed. \qed

		\subsubsection*{Step 2: Tower complexes} A {\em tower complex}
		$C_{k,t}$ is the union of $k$ copies $T^{1}_{k,t},\ldots,T^{k}_{k,t}$
		of the tower $T_{k,t}$ such that every two of them have the common
		base $V_0^1=\ldots=V^k$, are vertex-disjoint apart from that, and have
		no edges between $T^i_{k,t}-V_0^i$ and $T^j_{k,t}-V_0^j$ for $j\neq
		i$. This common base $V^0=\{v_{0,0},v_{0,1}\}$ will be called {\em the
			base of $C_{k,t}$}.
		
		Lemma~\ref{cl1} naturally extends to complexes as follows.
		
		
		\begin{lemma}\label{cl2} Let $C=C_{k,t}$. For every $A\subseteq V(C)$,
			\begin{equation}\label{equ3'} \mbox{if $|A|\geq 2$, then
				$\rho_{k,C}(A)\geq 2(k+1)(k-2)-2(k-1)$.} \end{equation} Moreover,
			\begin{equation}\label{equ4'} \mbox{if $A\supseteq V_0$, then
				$\rho_{k,C}(A)\geq 2(k+1)(k-2)$.} \end{equation} \end{lemma}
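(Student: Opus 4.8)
The plan is to reduce Lemma~\ref{cl2} to Lemma~\ref{cl1} by means of a decomposition identity for the potential. Given $A\subseteq V(C)$, write $A_i=A\cap V(T^i_{k,t})$ for $i=1,\dots,k$. Because the towers $T^1_{k,t},\dots,T^k_{k,t}$ pairwise share exactly the base $V_0$, and there are no edges between $T^i_{k,t}-V_0$ and $T^j_{k,t}-V_0$, a vertex of $A$ outside $V_0$ lies in exactly one $A_i$, a vertex of $A\cap V_0$ lies in all $k$ of them, and likewise every edge of $C[A]$ other than the base edge $v_{0,0}v_{0,1}$ lies in exactly one $T^i[A_i]$, while the base edge (when both its endpoints are in $A$) lies in all $k$. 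Since $\rho_{k,\cdot}$ is linear in the numbers of vertices and edges, these overlaps give
\begin{equation*}
\rho_{k,C}(A)=\sum_{i=1}^{k}\rho_{k,T^i}(A_i)-(k-1)\,\rho_{k,T^1}(A\cap V_0),
\end{equation*}
where the subtracted term equals $0$, $(k+1)(k-2)$, or $2(k+1)(k-2)-2(k-1)$ according as $|A\cap V_0|$ is $0$, $1$, or $2$. I would then split into these three cases.

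If $A\cap V_0=\emptyset$, the correction vanishes and the $A_i$ are pairwise disjoint; each term $\rho_{k,T^i}(A_i)$ is nonnegative (it equals $(k+1)(k-2)$ for a singleton and is at least $2(k+1)(k-2)-2(k-1)>0$ for $|A_i|\geq 2$ by~\eqref{equ3}), and since $|A|\geq 2$, either some $A_i$ has $|A_i|\geq 2$, which already yields~\eqref{equ3'}, or at least two of the $A_i$ are singletons, giving $\rho_{k,C}(A)\geq 2(k+1)(k-2)$; here~\eqref{equ4'} is vacuous. If $V_0\subseteq A$, then every $A_i\supseteq V_0$, so~\eqref{equ4} gives $\rho_{k,T^i}(A_i)\geq 2(k+1)(k-2)$; substituting into the identity with $\rho_{k,T^1}(V_0)=2(k+1)(k-2)-2(k-1)$ and simplifying, the required bound $\rho_{k,C}(A)\geq 2(k+1)(k-2)$ collapses to $2(k-1)^2\geq 0$, which proves both~\eqref{equ3'} and~\eqref{equ4'}.

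The case $|A\cap V_0|=1$ is the one I would handle last, as it needs the most care. Here every $A_i$ contains the chosen base vertex, so $|A_i|\geq 1$; a singleton block contributes exactly $(k+1)(k-2)$, a block with $|A_i|\geq 2$ contributes at least $2(k+1)(k-2)-2(k-1)$ by~\eqref{equ3}, and $|A|\geq 2$ forces at least one block of the second kind. Keeping one such block $A_{i_0}$ and bounding each of the other $k-1$ blocks below by $(k+1)(k-2)$, the correction term $(k-1)(k+1)(k-2)$ is exactly cancelled, so $\rho_{k,C}(A)\geq \rho_{k,T^{i_0}}(A_{i_0})\geq 2(k+1)(k-2)-2(k-1)$, which is~\eqref{equ3'} (and~\eqref{equ4'} is vacuous since $V_0\not\subseteq A$); any further large blocks only add a nonnegative surplus of $k(k-3)$ each. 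I expect the only real difficulty to be the bookkeeping in the decomposition identity — getting the multiplicities of the $V_0$-vertices and of the base edge exactly right — together with checking that the handful of quadratic-in-$k$ inequalities that appear ($2(k+1)(k-2)-2(k-1)\geq (k+1)(k-2)$, i.e.\ $k^2-3k\geq 0$, and $2(k-1)^2\geq 0$) hold for all $k\geq 4$; beyond that the argument is a direct invocation of Lemma~\ref{cl1}.
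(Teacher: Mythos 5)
Your decomposition into tower pieces and case split by $|A\cap V_0|$ is essentially the paper's own argument (the paper writes the same identity, just indexed over the towers that meet $A$ outside $V_0$), and all three cases reduce to Lemma~\ref{cl1} exactly as in the paper. One factual slip to correct: $v_{0,0}v_{0,1}$ is \emph{not} an edge of $T_{k,t}$, hence not of $C_{k,t}$ either --- the only edges incident to $V_0$ go to $V_1$, and the $a=2$ discussion in the proof of Lemma~\ref{cl1} already notes $E(T[V_0])=\emptyset$. Consequently $\rho_{k,T^1}(V_0)=2(k+1)(k-2)$ rather than $2(k+1)(k-2)-2(k-1)$, and the ``base edge lies in all $k$ towers'' remark in your bookkeeping should be dropped (edges of $C[A]$ are never shared between towers). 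Your symbolic identity $\rho_{k,C}(A)=\sum_i\rho_{k,T^i}(A_i)-(k-1)\rho_{k,T^1}(A\cap V_0)$ remains correct, but in the $V_0\subseteq A$ case the substitution then yields exactly $2(k+1)(k-2)$ with no surplus --- the claimed slack of $2(k-1)^2$ is an artifact of the phantom edge. The bound is tight (take $A=V_0$), and the lemma's conclusion is still established; the other two cases are unaffected.
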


		{\bf Proof.} Let $A\subseteq V(C)$ with $|A|\geq 2$, and $A_0=A\cap
		V_0$. Let $A_i=A\cap V(T^{i}_{k,t})$ if $A\cap V(T^{i}_{k,t})-V_0\neq
		\emptyset$, and $A_i=\emptyset$ otherwise. Let $I=\{i\in
		[t]\,:\,A_i\neq\emptyset\}$. If $|I|\leq 1$, then $A$ is a subset of
		one of the towers, and we are done by Lemma~\ref{cl1}. So let $|I|\geq
		2$.
		
		{\bf Case 1:} $V_0\subseteq A$. Then for each nonempty $A_i$,
		$|A_i|\geq 3$ and by Lemma~\ref{cl1}, $\rho_{k,C}(A_i)\geq
		2(k+1)(k-2)$. So, by the definition of the potential,
		$$\rho_{k,C}(A)=\sum_{i\in I}\rho_{k,C}(A_i)-(|I|-1)2(k+1)(k-2)\geq
		|I|2(k+1)(k-2)-(|I|-1)2(k+1)(k-2)=2(k+1)(k-2).$$
		
		{\bf Case 2:} $V_0\cap A=\{v_{0,j}\}$, where $j\in \{1,2\}$. Then for
		each nonempty $A_i$, $|A_i|\geq 2$ and by Lemma~\ref{cl1},
		$\rho_{k,C}(A_i)\geq 2(k+1)(k-2)-2(k-1)$. So, by the definition of the
		potential and the fact that $|I|\geq 2$, $$\rho_{k,C}(A)=\sum_{i\in
			I}\rho_{k,C}(A_i)-(k+1)(k-2)(|I|-1)$$ $$\geq
		|I|(2(k+1)(k-2)-2(k-1))-(k+1)(k-2)(|I|-1)
		=|I|((k+1)(k-2)-2(k-1))+(k+1)(k-2)$$ $$\geq
		2((k+1)(k-2)-2(k-1))+(k+1)(k-2)>2(k+1)(k-2)-2(k-1),$$ when $k\geq 4$.
		
		{\bf Case 3:} $V_0\cap A=\emptyset$. Then $\rho_{k,C}(A)=\sum_{i\in
			I}\rho_{k,C}(A_i)$. Since $\rho_{k,C}(A_i)\geq (k+1)(k-2)$ for every
		$i\in I$ and $|I|\geq 2$, $\rho_{k,C}(A)\geq 2(k+1)(k-2)$, as
		claimed.\qed
		
		\medskip Given a tower complex $C_{k,t}$, let
		$W_0=\{v^{1}_{t,0},\ldots,v^{k}_{t,0}\}$ and
		$W_1=\{v^{1}_{t,1},\ldots,v^{k}_{t,1}\}$. Then the auxiliary {\em
			bridge graph} $B_{k,t}$ is the bipartite graph with parts $W_0$ and
		$W_1$ whose edges are defined as follows. For each pair $(i,j)$ with
		$1\leq i<j\leq k$, if $j-i\leq k/2$, then $B_{k,t}$ contains edge
		$v^i_{t,0}v^j_{t,1}$, otherwise it contains edge $v^i_{t,1}v^j_{t,0}$.
		There are no other edges.

		\begin{figure}[H] \centering

			\begin{tikzpicture}[scale=1, transform shape] \tikzstyle{every node}
			= [circle, minimum width=8pt, inner sep=0pt] \node(v10) at
			(0,0){$v^1_{t,0}$}; \node(v20) at (0,1){$v^2_{t,0}$}; \node(v30) at
			(0,2){$v^3_{t,0}$}; \node(v40) at (0,3){$v^4_{t,0}$};

			\node(v11) at (2,0){$v^1_{t,1}$}; \node(v21) at (2,1){$v^2_{t,1}$};
			\node(v31) at (2,2){$v^3_{t,1}$}; \node(v41) at (2,3){$v^4_{t,1}$};

			\foreach \from/\to in { v10/v21, v10/v31, v20/v31, v20/v41, v30/v41,
				v40/v11} \draw [-] (\from) edge (\to);
			
			\end{tikzpicture}
			
			\caption{$B_{4,t}$} \label{fig:M2} \end{figure}
		
		By construction, $B_{k,t}$ has exactly $\binom{k}{2}$ edges, and the
		maximum degree of $B_{k,t}$ is $ \lfloor k/2\rfloor$. It is important
		that \begin{equation}\label{equ9} \mbox{for each $1\leq i<j\leq k$, an
			edge in $B_{k,t}$ connects $\{v^i_{t,0},v^i_{t,1}\}$ with
			$\{v^j_{t,0},v^j_{t,1}\}$.} \end{equation}

		\medskip The {\em supercomplex} $S_{k,t}$ is obtained from a tower
		complex $C_{k,t}$ by adding to it all edges of $B_{k,t}$. The main
		properties of $S_{k,t}$ are stated in the next three lemmas.

		\begin{lemma}\label{lem2'} For each  $(k-1)$-coloring $f$ of
			$S_{k,t}${,} 
			\begin{equation}\label{eq3} f(v_{0,1})\neq f(v_{0,0}). \end{equation}
		\end{lemma}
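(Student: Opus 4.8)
The plan is to argue by contradiction and reduce everything to Lemma~\ref{lem2} together with the structure of the bridge graph $B_{k,t}$. So suppose $f$ is a proper $(k-1)$-coloring of $S_{k,t}$ with $f(v_{0,1})=f(v_{0,0})$; I will derive a contradiction. Since $S_{k,t}\supseteq C_{k,t}\supseteq T^{i}_{k,t}$ for each $1\le i\le k$, and the base $V_0=\{v_{0,0},v_{0,1}\}$ of the supercomplex is the common base of all $k$ towers, the restriction of $f$ to $V(T^{i}_{k,t})$ is a proper $(k-1)$-coloring of $T^{i}_{k,t}$ satisfying the hypothesis~\eqref{e3} of Lemma~\ref{lem2}. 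Applying that lemma to $T^i_{k,t}$ and taking its conclusion~\eqref{e4} at the top level, I get $f(v^{i}_{t,1})=f(v^{i}_{t,0})$ for every $1\le i\le k$. Write $c_i$ for this common value, so that $f$ is constant, equal to $c_i$, on the pair $\{v^{i}_{t,0},v^{i}_{t,1}\}$.

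Next I would bring in the bridge graph. By~\eqref{equ9}, for every pair $1\le i<j\le k$ the graph $B_{k,t}$ (which sits inside $S_{k,t}$) contains an edge joining a vertex of $\{v^{i}_{t,0},v^{i}_{t,1}\}$ to a vertex of $\{v^{j}_{t,0},v^{j}_{t,1}\}$. Since $f$ is a proper coloring and it takes the single value $c_i$ on the first pair and the single value $c_j$ on the second, the two endpoints of that edge receive distinct colors, hence $c_i\neq c_j$. As this holds for all $i<j$, the colors $c_1,\ldots,c_k$ are pairwise distinct, so $f$ uses at least $k$ colors on the top vertices of the towers. This contradicts the assumption that $f$ is a $(k-1)$-coloring, and therefore no such $f$ exists, i.e.\ $f(v_{0,1})\neq f(v_{0,0})$, which is~\eqref{eq3}.

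I do not expect a genuine obstacle here; the argument is essentially a pigeonhole once Lemma~\ref{lem2} is in hand. The only points needing (routine) care are: verifying that the restriction of $f$ to each tower genuinely satisfies the hypothesis of Lemma~\ref{lem2}, which is immediate because all towers share the base $V_0$ and hence inherit the relation $f(v_{0,1})=f(v_{0,0})$; and invoking~\eqref{equ9} to guarantee that between \emph{every} pair of tower tops there is a bridge edge, so that the $c_i$ really do form a rainbow on $k$ vertices.
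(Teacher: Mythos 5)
Your proof is correct and follows essentially the same route as the paper: apply Lemma~\ref{lem2} in each of the $k$ towers to force $f(v^{i}_{t,1})=f(v^{i}_{t,0})$, then use~\eqref{equ9} to conclude that the $k$ resulting colors are pairwise distinct, contradicting the $(k-1)$-coloring.
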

		
		{\bf Proof.} Suppose $S_{k,t}$ has a $(k-1)$-coloring $f$ with
		$f(v_{0,1})=f(v_{0,0}).$ Then by Lemma~\ref{lem2},
		$f(v^i_{t,1})=f(v^i_{t,0})$ for every $1\leq i\leq k$. Thus
		by~\eqref{equ9}, the $k$ colors
		$f(v^1_{t,0}),f(v^2_{t,0}),\ldots,f(v^k_{t,0})$ are all distinct, a
		contradiction.\qed

		\begin{lemma}\label{cl3} Let $S=S_{k,t}$ with base $V_0$. For every
			$A\subseteq V(S)-V_0$, \begin{equation}\label{equ3''} \mbox{if
				$|A|\geq 2$, then   $\rho_{k,S}(A)\geq 2(k+1)(k-2)-2(k-1)$.}
			\end{equation} \end{lemma}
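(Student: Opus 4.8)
The plan is to partition $A$ according to which tower contains each vertex, bound the potential of each piece via Lemma~\ref{cl1}, and pay for the bridge edges of $B_{k,t}$. Put $A_i=A\cap V(T^i_{k,t})$, $I=\{i:A_i\neq\emptyset\}$, and $m=|I|$. Since $A\subseteq V(S)-V_0$ and the towers are pairwise vertex-disjoint off $V_0$ with no edges between $T^i_{k,t}-V_0$ and $T^j_{k,t}-V_0$, the sets $A_i$ $(i\in I)$ partition $A$, we have $S[A_i]=T^i_{k,t}[A_i]$ for each $i$, and every edge of $S[A]$ not inside some $A_i$ is a bridge edge joining two different pieces. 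Writing $b$ for the number of bridge edges in $S[A]$ and $b_i$ for the number of them meeting $T^i_{k,t}$ (so $\sum_{i\in I}b_i=2b$), the potential is additive:
$$\rho_{k,S}(A)=\sum_{i\in I}\bigl(\rho_{k,T^i_{k,t}}(A_i)-(k-1)b_i\bigr).$$
Lemma~\ref{cl1} supplies $\rho_{k,T^i_{k,t}}(A_i)\geq(k+1)(k-2)$ for every $i$, and $\rho_{k,T^i_{k,t}}(A_i)\geq 2(k+1)(k-2)-2(k-1)$ when $|A_i|\geq 2$; I also use that $B_{k,t}$ has maximum degree $\lfloor k/2\rfloor$ and that each tower meets exactly $k-1$ bridge edges in all.

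For small $m$ I would argue directly. If $m=1$ the assertion is Lemma~\ref{cl1}. If $m=2$ there is at most one bridge edge, so $b\leq 1$ and the displayed formula already gives $\rho_{k,S}(A)\geq 2(k+1)(k-2)-2(k-1)$. If $m=3$ and $b\leq 2$, then $\rho_{k,S}(A)\geq 3(k+1)(k-2)-4(k-1)$, which is at least $2(k+1)(k-2)-2(k-1)$ because $k^2-3k\geq 0$. If $m=3$ and $b=3$ — all three mutual bridge edges present, forcing $b_1=b_2=b_3=2$ — I would first check, using only the rule ``take $v^i_{t,0}v^j_{t,1}$ if $j-i\leq k/2$ and $v^i_{t,1}v^j_{t,0}$ otherwise'' defining $B_{k,t}$, that three towers cannot realize all three of their bridge edges while each contributes only one top vertex to $A$. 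Hence some piece $A_i$ contains both of $v^i_{t,0},v^i_{t,1}$, so $|A_i|\geq 2$ there, and the formula gives $\rho_{k,S}(A)\geq 4(k+1)(k-2)-8(k-1)\geq 2(k+1)(k-2)-2(k-1)$ for $k\geq 4$.

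For $m\geq 4$ I would establish a uniform per-tower estimate $\rho_{k,T^i_{k,t}}(A_i)-(k-1)b_i\geq\beta_k:=(k+1)(k-2)-(k-1)\lfloor k/2\rfloor$ for every $i\in I$. If $|A_i|=1$, its lone vertex meets at most $\lfloor k/2\rfloor$ bridge edges, so $b_i\leq\lfloor k/2\rfloor$, and $\rho_{k,T^i_{k,t}}(A_i)=(k+1)(k-2)$ yields the estimate. If $|A_i|\geq 2$, then $b_i\leq k-1$ and $\rho_{k,T^i_{k,t}}(A_i)\geq 2(k+1)(k-2)-2(k-1)$, and one checks $2(k+1)(k-2)-2(k-1)-(k-1)^2\geq\beta_k$ for $k\geq 4$. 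Summing over the at least four towers gives $\rho_{k,S}(A)\geq 4\beta_k$, and $4\beta_k\geq 2(k+1)(k-2)-2(k-1)$ is immediate for $k\geq 4$; this closes the last case.

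I expect the only genuinely delicate point to be the sub-case $m=3$, $b=3$: for $k=4$ the crude estimate falls a constant short of the target, so one really has to exclude a ``triangle of three single top vertices'' by tracking which of $v^i_{t,0}$, $v^i_{t,1}$ each bridge edge uses. With $i<j<l$ the three towers, one splits on whether $l-i\leq k/2$: in that case $j-i\leq k/2$ and $l-j\leq k/2$ as well, and consistency of the single vertex at $i$ and at $l$ forces an inconsistency at $j$; whereas $l-i>k/2$ would force $j-i>k/2$ and $l-j>k/2$, hence $l-i>k$, which is impossible. Everything else is arithmetic in $k$ together with Lemma~\ref{cl1} and the elementary structure of $B_{k,t}$.
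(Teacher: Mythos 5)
Your decomposition $\rho_{k,S}(A)=\sum_{i\in I}\bigl(\rho_{k,T^i}(A_i)-(k-1)b_i\bigr)$ is correct (bridge edges never run within a single tower, so the $A_i$ partition $A$ and every non-tower edge of $S[A]$ is counted twice by $\sum b_i$), the per-tower bound $\beta_k=(k+1)(k-2)-(k-1)\lfloor k/2\rfloor$ checks out in both parities, and $4\beta_k\geq 2(k+1)(k-2)-2(k-1)$ holds for $k\geq 4$; the arithmetic in the $|I|=2$ and $|I|=3$ cases is also sound, including the structural argument ruling out a triangle of three singleton top vertices (the only slightly delicate point, as you noted). So the proof is correct.

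It is, however, a genuinely different argument from the paper's. The paper argues by contradiction from a size-minimal violating set $A_0$: it first bootstraps $|A_0|\geq 4$, then shows $\delta(S[A_0])\geq k/2$ by deleting low-degree vertices, and then partitions $I$ into $I_1$ (towers met in one vertex) and $I_2$ (towers met in $\geq 2$), bounding $|E(A_0,B)|\leq\binom{|I|}{2}-\binom{|I_1|}{2}$ and using a pigeonhole on bridge-neighborhoods to force $|I_2|\geq\frac{k+2}{2}$, after which a single global inequality closes the case. Your argument avoids the minimality hypothesis and the degree-$\geq k/2$ step entirely: you bound each tower's contribution separately, which makes the $|I|\geq 4$ case a one-line sum and pushes all the genuine content into the finite check for $|I|\leq 3$. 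The trade-off is that the paper's pigeonhole on $I_1$ handles all values of $|I|$ uniformly, whereas your uniform bound $\beta_k$ is not strong enough by itself when $|I|\leq 3$ and you must fall back on ad hoc counting plus the explicit structure of $B_{k,t}$ (the "can't realize all three mutual bridge edges with three singleton top vertices" argument, which the paper never needs). Both are valid; yours is arguably more elementary, the paper's is more uniform.
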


		{\bf Proof.}  Suppose  the lemma is not true. Let $C$ be the copy of
		$C_{k,t}$ from which we obtained $S$ by adding the edges of
		$B=B_{k,t}$. Among $A\subseteq V(S)-V_0$ with $|A|\geq 2$ and
		$\rho_{k,S}(A)< 2(k+1)(k-2)-2(k-1)$, choose $A_0$ with the smallest
		size. Let $a=|A_0|$. Let $I=\{i\in [t]\,:\,A_0\cap V(T^{i}_{k,t})
		\neq\emptyset\}$. If $|I|\leq 1$, then $A$ is a subset of one of the
		towers, and we are done by Lemma~\ref{cl1}. So let $|I|\geq 2$.

		If $a=2$, then $$\rho_{k,S}(A_0)= a(k+1)(k-2)-2(k-1)|E(S[A_0])|\geq
		2(k+1)(k-2)-2(k-1),$$  contradicting the choice of $A_0$. So $ a\geq
		3$. Furthermore, if $a=3$, then since $|I|\geq 2$, $B_{k,t}$ is
		bipartite, and $v^i_{t,0}v^i_{t,1}\notin E(S)$ for any $i$, the graph
		$S[A_0]$ has at most two edges and so $\rho_{k,S}(A_0)\geq
		3(k+1)(k-2)-2(2(k-1))>2(k+1)(k-2)-2(k-1)$. Thus
		\begin{equation}\label{equ5'} a\geq 4. \end{equation}
		
		If $d_{S[A_0]}(w)\leq \frac{k-1}{2}$ for some $w\in A_0$, then
		$$\rho_{k,S}(A_0-w)\leq
		\rho_{k,S}(A_0)-(k+1)(k-2)+\frac{k-1}{2}2(k-1)=\rho_{k,S}(A_0)+3-k<\rho_{k,S}(A_0).$$ By~\eqref{equ5'}, this contradicts the minimality of $a$. So, \begin{equation}\label{equ11} \mbox{$\delta(S[A_0])\geq \frac{k}{2}$. In particular, $a\geq 1+ \frac{k}{2}$.} \end{equation}
		
		Let $E(A_0,B)$ denote the set of edges of $B$ both ends of which are
		in $A_0$. Then since $A_0\cap V_0=\emptyset$,
		\begin{equation}\label{equ12}
		\rho_{k,S}(A_0)=\rho_{k,C}(A_0)-2(k-1)|E(A_0,B)|=\sum_{i\in
			I}\rho_{k,C}(A_i)-2(k-1)|E(A_0,B)|. \end{equation}
		
		Let $I_1=\{i\in I\,:\,|A_0\cap V(T^{i}_{k,t})|=1\}$ and $I_2=I-I_1$.
		By Lemma~\ref{cl1}, for each $i\in I_2$, $\rho_{k,S}(A_i)\geq
		2(k+1)(k-2)-2(k-1)$. Thus if $I_1=\emptyset$, then by~\eqref{equ12}
		and the fact that $|E(A_0,B)|\leq \binom{|I|}{2}$, we have
		$$\rho_{k,S}(A_0)\geq
		|I|(2(k+1)(k-2)-2(k-1))-\binom{|I|}{2}2(k-1)=|I|(2k^2-3k-3-|I|(k-1)).
		$$ The minimum of the last expression is achieved either for $|I|=2$
		or for $|I|=k$. If $|I|=2$, this is $2(2k^2-5k-1)>2(k+1)(k-2)-2(k-1)$.
		If $|I|=k$, this is $k(k^2-2k-3)$, which is again greater than
		$2(k+1)(k-2)-2(k-1)$. Thus $|I_1|\neq\emptyset$.
		
		Suppose $i,i'\in I_1$, $w\in A_i$, $w'\in A_{i'}$ and $ww'\in E(S)$.
		Let $A'=A_0-w-w'$. By the definition of $I_1$, all edges of $S[A_0]$
		incident with $w$ or $w'$ are in $E(B)$. Since $\Delta(B)\leq
		\frac{k}{2}$, $|E(S[A_0])|-|E(S[A'])|\leq k-1$. Thus
		$$\rho_{k,S}(A')\leq
		\rho_{k,S}(A_0)-2(k+1)(k-2)+(k-1)2(k-1)=\rho_{k,S}(A_0)-2k+6.$$ But
		by~\eqref{equ5'}, $|A'|\geq 2$, a contradiction to the minimality of
		$a$. It follows that for every $i\in I_1$, each neighbor in $A_0$ of
		the vertex $w\in A_i$ is in some $A_j$ for $j\in I_2$. This  implies
		$|E(A_0,B)|\leq \binom{|I|}{2}-\binom{|I_1|}{2}$. Together
		with~\eqref{equ11} and $\Delta(B)=\lfloor k/2\rfloor$, this yields
		that for each $i\in I_1$, the vertex $w\in A_i$ has exactly $k/2$
		neighbors in $B$, and all these neighbors are in $A$. In particular,
		$|I_2|\geq \frac{k}{2}$ and $k$ is even. Moreover, if $i,i'\in I_1$,
		$w\in A_i$ and $w'\in A_{i'}$, then their neighborhoods in $B$ are
		distinct, and thus in this case $|I_2|>\frac{k}{2}$. Since $k$ is
		even, this implies \begin{equation}\label{equ13} |I_2|\geq
		\frac{k+2}{2}. \end{equation}
		
		Since the potential of a single vertex is $(k+1)(k-2)$,
		\begin{equation}\label{equ14} \rho_{k,S}(A_0)\geq
		|I|(2(k+1)(k-2)-2(k-1))-|I_1|((k+1)(k-2)-2(k-1))-\left(\binom{|I|}{2}-\binom{|I_1|}{2}\right)2(k-1). \end{equation} The expression $-|I_1|((k+1)(k-2)-2(k-1)+\binom{|I_1|}{2}2(k-1)$ in~\eqref{equ14} decreases when $|I_1|$ grows but is at most $\frac{k-2}{2}$. Thus by~\eqref{equ13}, it is enough to let $|I_1|=|I|-\frac{k+2}{2}$ in~\eqref{equ14}. So, $$\rho_{k,S}(A_0)\geq |I|(k+1)(k-2)+\frac{k+2}{2}((k+1)(k-2)-2(k-1))-(k-1)(k+2)(|I|-\frac{k+4}{4}) $$ $$=-2k|I|+\frac{k+2}{2}\left[k^2-3k+\frac{k^2+3k-4}{2}\right]\geq -2k^2+\frac{(k+2)(3k^2-3k-4)}{4}>2(k+1)(k-2)-2(k-1) $$ for $k\geq 4$.\qed

		\begin{lemma}\label{cl4} Let $S=S_{k,t}$ with base $V_0$. Let
			$A\subseteq V(S)$ and $|A|\leq t+1$. \begin{equation}\label{equ13'}
			\mbox{If $ |A|\geq 2$, then   $\rho_{k,S}(A)\geq 2(k+1)(k-2)-2(k-1)$.}
			\end{equation} Moreover, \begin{equation}\label{equ14'} \mbox{if
				$A\supseteq V_0$, then $\rho_{k,S}(A)\geq 2(k+1)(k-2)$.}
			\end{equation} \end{lemma}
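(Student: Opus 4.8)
The plan is to prove \eqref{equ13'} and \eqref{equ14'} simultaneously by a direct case analysis that reduces everything to Lemmas~\ref{cl1}, \ref{cl2}, and \ref{cl3}; in contrast to the proof of Lemma~\ref{cl3}, no induction is needed. Write $V_0$ for the base of $S=S_{k,t}$, let $C$ be the underlying tower complex and $B=B_{k,t}$ the bridge graph, so that $E(S)=E(C)\sqcup E(B)$. If $A\cap V_0=\emptyset$, then \eqref{equ14'} is vacuous and \eqref{equ13'} is precisely Lemma~\ref{cl3}. So assume $A\cap V_0\neq\emptyset$ and set $c=|A\cap V_0|\in\{1,2\}$. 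For each tower $T^i_{k,t}$ put $A_i=A\cap V(T^i_{k,t})$ (so $A\cap V_0\subseteq A_i$), and let $I=\{i: A_i\neq A\cap V_0\}$ be the set of towers from which $A$ takes a non-base vertex. If $|I|\le 1$, then $A$ lies inside a single tower (or inside $V_0$), and Lemma~\ref{cl1} yields both inequalities at once.

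The remaining case is $|I|\ge 2$, and here I first record the bookkeeping identity
$$\rho_{k,S}(A)=\sum_{i\in I}\rho_{k,C}(A_i)\;-\;(k+1)(k-2)\,c\,(|I|-1)\;-\;2(k-1)\,|E_B(A)|,$$
where $E_B(A)$ is the set of bridge edges with both ends in $A$; it follows from the facts that the sets $A_i\setminus V_0$ are pairwise disjoint and that $E(C[A])$ is the disjoint union of the $E(C[A_i])$. For every $i\in I$, Lemma~\ref{cl1} applied inside $T^i_{k,t}$ gives $\rho_{k,C}(A_i)\ge 2(k+1)(k-2)-2(k-1)$, improving to $\rho_{k,C}(A_i)\ge 2(k+1)(k-2)$ when $c=2$ (then $V_0\subseteq A_i$, so \eqref{equ4} applies). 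Since every bridge edge joins two distinct towers and there is at most one bridge edge per pair, $|E_B(A)|\le\binom{s}{2}$, where $s=|I^{\mathrm{top}}|$ and $I^{\mathrm{top}}=\{i\in I: v^i_{t,0}\in A\text{ or }v^i_{t,1}\in A\}$ is the set of towers supplying a bridge endpoint.

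The crucial point is a gain of potential for the towers in $I^{\mathrm{top}}$, and this is where the hypothesis $|A|\le t+1$ enters. Fix $i\in I^{\mathrm{top}}$; since $|I|\ge 2$, some vertex of $A$ lies outside $V(T^i_{k,t})$, so $|A_i|\le |A|-1\le t$. Hence $A_i$ meets at most $t-c\le t-1$ of the $t$ levels $V^i_1,\dots,V^i_t$, one of which is $V^i_t$, so some level $V^i_{\ell^*}$ with $1\le \ell^*\le t-1$ is entirely avoided. As edges of a tower run only within a level or between consecutive levels, $A_i$ splits as $A_i^{\mathrm{lo}}\subseteq V_0\cup\dots\cup V^i_{\ell^*-1}$ and $A_i^{\mathrm{hi}}\subseteq V^i_{\ell^*+1}\cup\dots\cup V^i_t$ with no edges between them, so $\rho_{k,C}(A_i)=\rho_{k,C}(A_i^{\mathrm{lo}})+\rho_{k,C}(A_i^{\mathrm{hi}})$; both pieces are nonempty (the low part contains $A\cap V_0$, the high part contains the bridge endpoint in $V^i_t$). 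Since a singleton has potential $(k+1)(k-2)$ and $2(k+1)(k-2)-2(k-1)\ge(k+1)(k-2)$ for $k\ge4$, each piece has potential at least $(k+1)(k-2)$, and using \eqref{equ4} for $A_i^{\mathrm{lo}}\supseteq V_0$ when $c=2$ one obtains $\rho_{k,C}(A_i)\ge 2(k+1)(k-2)$ if $c=1$ and $\rho_{k,C}(A_i)\ge 3(k+1)(k-2)$ if $c=2$.

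Substituting these bounds into the identity, with $m=|I|$ satisfying $2\le m\le k$ and $0\le s\le m$, reduces \eqref{equ14'} (case $c=2$) to $s\,[(k+1)(k-2)-(k-1)(s-1)]\ge 0$ and \eqref{equ13'} (case $c=1$) to $m\,k(k-3)+(k-1)s(3-s)\ge k(k-3)$; both are elementary for every $k\ge 4$ in the stated ranges (for the second, use $m\ge 2$ when $s\le 3$ and $m\ge s$ when $s\ge 4$, and evaluate the downward parabola in $s$ at the endpoints of its range). The step I expect to be the main obstacle is exactly this potential gain for towers in $I^{\mathrm{top}}$: the naive bound $\rho_{k,C}(A_i)\ge 2(k+1)(k-2)$ is tight precisely for long initial segments $V_0\cup V^i_1\cup\dots\cup V^i_j$ of a tower, configurations that would leave no slack to pay the $2(k-1)\binom{s}{2}$ bridge penalty, and the constraint $|A|\le t+1$ is what forbids such segments and forces the extra $(k+1)(k-2)$ per tower; checking that this gain always materializes and that the resulting arithmetic closes uniformly in $k$ is where the actual work lies.
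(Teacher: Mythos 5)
Your proof is correct, and it takes a genuinely different route from the paper's. The paper's argument is a short reduction via a minimal counterexample: choose a smallest $A_0$ violating either inequality, then (i) Lemma~\ref{cl2} forces $S[A_0]$ to contain a bridge edge $ww'$ (otherwise $\rho_{k,S}(A_0)=\rho_{k,C}(A_0)$ and Lemma~\ref{cl2} already gives the bound), (ii) Lemma~\ref{cl3} forces $A_0\cap V_0\neq\emptyset$, (iii) minimality forces $S[A_0]$ to be connected (a disconnection would split $A_0$ into two parts whose potentials already satisfy the bound with room to spare), and (iv) since the $S$-distance from $V_0$ to $V(B)$ is at least $t$ and $A_0$ contains the two distinct bridge endpoints $w,w'$ at distance $t$, connectivity forces $|A_0|\geq t+2$, contradicting $|A_0|\leq t+1$. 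Your proof instead makes the bookkeeping fully explicit with no extremality: you decompose $A$ tower by tower, note that $|A|\le t+1$ forces every tower that supplies a bridge endpoint to skip an interior level, split that tower's contribution at the skipped level to harvest an extra $(k+1)(k-2)$ (resp.\ $2(k-1)$ when $c=1$) of potential, and then check that the total gain covers the $2(k-1)\binom{s}{2}$ bridge penalty for all admissible $m=|I|\le k$ and $s=|I^{\mathrm{top}}|\le m$. The level-skipping observation plays exactly the role that the distance-$t$ argument plays in the paper, and I verified that your final two inequalities, $s\bigl[(k+1)(k-2)-(k-1)(s-1)\bigr]\ge 0$ for $c=2$ and $(m-1)k(k-3)+(k-1)s(3-s)\ge 0$ for $c=1$, do hold in the stated ranges for all $k\ge 4$. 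The paper's proof is shorter and more conceptual (connectivity plus distance), while yours is more computational but self-contained and bypasses the minimal-counterexample step; both are valid.
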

		
		{\bf Proof.} Suppose  the lemma is not true. Among $A\subseteq V(S)$
		with $|A|\geq 2$ for which~\eqref{equ13'} or~\eqref{equ14'} does not
		hold, choose $A_0$ with the smallest size. Let $a=|A_0|$. By
		Lemma~\ref{cl2}, $S[A_0]$ contains an edge $ww'$ in $B$. By
		Lemma~\ref{cl3}, $A_0$ contains a vertex  $v\in V_0$. In particular,
		$a\geq 3$.
		
		If $S[A_0]$ is disconnected, then $A_0$ is the disjoint union of
		nonempty $A'$ and $A''$ such that  $S$ has  no edges connecting $A'$
		with $A''$. Since $a\geq 3$, we may assume that $|A'|\geq 2$. By the
		minimality of $A_0$, $\rho_{k,S}(A')\geq 2(k+1)(k-2)-2(k-1)$. Also,
		$\rho_{k,S}(A'')\geq (k+1)(k-2)$. Thus
		$$\rho_{k,S}(A_0)=\rho_{k,S}(A')+\rho_{k,S}(A'')\geq
		2(k+1)(k-2)-2(k-1)+(k+1)(k-2)>2(k+1)(k-2), $$ contradicting the choice
		of $A_0$. Therefore, $S[A_0]$ {is connected.}
		
		Since the distance in $S$ between $v\in V_0$ and $\{w,w'\}\subset V(B)$
		is at least $t$, $a\geq t+2$, a contradiction.\qed

		\subsubsection*{Step 3: Completing the construction} Let
		$G=G(k,\epsilon)$ be obtained from a copy $H$ of $K_k$ by replacing
		every edge $uv$ in $H$ by a copy $S(uv)$ of $S_{k,t}$ with base
		$\{u,v\}$ so that all other vertices in these graphs are distinct.
		Suppose $G$ has a $(k-1)$-coloring $f$. Since $|V(H)|=k$, for some
		distinct $u,v\in V(H)$, $f(u)=f(v)$. This contradicts
		Lemma~\ref{lem2'}. Thus $\chi(G)\geq k$.
		
		Suppose there exists $A\subseteq V(G)$  with
		\begin{equation}\label{equ15} \mbox{$|A|\geq 2$ and
			$|E(G[A])|>1+(1+\epsilon)\frac{(k+1)(k-2)}{2(k-1)}(|A|-2)$.}
		\end{equation}
		
		Choose a smallest $A_0\subseteq V(G)$ satisfying~\eqref{equ15} and let
		$a=|A_0|$. Since a $2$-vertex (simple) graph has at most one edge,
		$a\geq 3$. We claim that \begin{equation}\label{equ16} \mbox{$G[A_0]$
			is $2$-connected.} \end{equation} Indeed, if not, then since $a\geq 3$,
		there are $x\in A_0$ and subsets $A_1,A_2$ of $A_0$ such that $A_1\cap
		A_2=\{x\}$, $A_1\cup A_2=A_0$, $|A_1|\geq 2$, $|A_2|\geq 2$, and there
		are no edges between $A_1-x$ and $A_2-x$ (this includes the case that
		$G[A_0]$ is disconnected). By the minimality of $a$, $|E(G[A_j])|\leq
		1+(1+\epsilon)\frac{(k+1)(k-2)}{2(k-1)}(|A_j|-2)$ for $j=1,2$. So,
		$$|E(G[A_0])|=|E(G[A_1])|+|E(G[A_2])|\leq
		2+(1+\epsilon)\frac{(k+1)(k-2)}{2(k-1)}(|A_1|+|A_2|-4) $$
		$$=2+(1+\epsilon)\frac{(k+1)(k-2)}{2(k-1)}(a-3)\leq
		1+(1+\epsilon)\frac{(k+1)(k-2)}{2(k-1)}(a-2),$$
		contradicting~\eqref{equ15}. This proves~\eqref{equ16}.
		
		Let $J=\{uv\in E(H)\,:\,A_0\cap (V(S(uv)-u-v)\neq\emptyset\}$. For
		$uv\in J$, let $A_{uv}=A_0\cap (V(S(uv))$. Since $G[A_0]$ is
		$2$-connected, for each $uv\in J$, \begin{equation}\label{equ17}
		\mbox{$\{u,v\}\subset A_{uv}$ and $G[A_{uv}]$ is connected. In
			particular, $|A_{uv}|\geq 4$.} \end{equation}
		
		Our next claim is that for each $uv\in J$,
		\begin{equation}\label{equ18} \mbox{ $|E(G[A_{uv}])|\leq
			(1+\epsilon)\frac{(k+1)(k-2)}{2(k-1)}(|A_{uv}|-2)$.} \end{equation}
		Indeed, if $|A_{uv}|\leq t+1$, this follows from Lemma~\ref{cl4}. If
		$|A_{uv}|\geq t+2$, then by the part of Lemma~\ref{cl2} dealing with
		$A\supseteq V_0$, $$|E(G[A_{uv}])|\leq
		|E(B_{k,t})|+\frac{(k+1)(k-2)}{2(k-1)}(|A_{uv}|-2)=\binom{k}{2}+\frac{(k+1)(k-2)}{2(k-1)}(|A_{uv}|-2). $$ But since $t\geq k^3/\epsilon$, $\binom{k}{2}<\epsilon t\frac{(k+1)(k-2)}{2(k-1)}$. This proves~\eqref{equ18}.
		
		By~\eqref{equ18}, \begin{equation}\label{equ19} |E(G[A_0])|=\sum_{uv\in
			J}|E(G[A_{uv}])|\leq (1+\epsilon)\frac{(k+1)(k-2)}{2(k-1)}\sum_{uv\in
			J}(|A_{uv}|-2) \end{equation} Since each $A_{uv}$ has at most two
		vertices in common with the union of all other $A_{u'v'}$, $\sum_{uv\in
			J}(|A_{uv}|-2)\leq a-2$. Thus~\eqref{equ19} contradicts the choice of
		$A_0$. It follows that no $A\subseteq V(G)$ satisfies~\eqref{equ15},
		which exactly means that $m_2(G)\leq
		(1+\epsilon)\frac{(k+1)(k-2)}{2(k-1)}$. \end{proof}

	\section{The case $m_2(H)<m_2(K_m)$} \label{sec:DifOrder}

	When $m_2(H)<m_2(K_m)$ we show that as in the previous case there are
	two typical behaviors of the function $ex(G(n,p),K_m,H)$. For small
	values of $p$ Lemma \ref{lem:SmallValuesOfp} shows that there exists
	w.h.p.\ an $H$-free subgraph of $G(n,p)$ which contains all but a
	negligible part of the copies of $K_m$. For large values of $p$ Lemma
	\ref{lem:BigValuesOfp} shows that w.h.p.\ every $H$-free graph will have
	to contain a much smaller proportion of the copies of $K_m$.
	
	However, unlike in the case $m_2(H)>m_2(K_m)$ discussed in Section
	\ref{sec:EasyOrder}{,}   the change between the behaviors for $p=n^{-a}$
	does not happen at $-a=-1/m_2(H)$. Theorem \ref{thm:pAlmostM_2(H)} shows
	that if $p=n^{-a}$ and $-a$ is slightly bigger than $-1/m_2(H)$ we can
	still take all but a negligible number of copies of $K_m$ into an
	$H$-free subgraph. 
	As for a conjecture about where the change happens (and if there are
	indeed two regions of different behavior and not more) see the
	discussion in the last section.

	\begin{proof} [Proof of Theorem \ref{thm:pAlmostM_2(H)}] Let $G\sim
		G(n,p)$ with $p=n^{-a}$ where $-a=-c+\delta$ for some small $\delta>0$
		to be chosen later. Let $G'$ be the graph obtained from $G$ by first
		removing all pairs of copies of $K_m$ sharing an edge and then removing
		all edges that do not take part in a copy of $K_m$. As $\delta$ is
		small, we may assume that  $-a<-1/m_2(K_m)$, apply Lemma
		\ref{lem:oneK_mPerEdge} and deduce that w.h.p.\ the number of copies of
		$K_m$ removed in the first step is $o(\binom{n}{m}p^{\binom{m}{2}})$. In
		the second step there are no copies of $K_m$ removed, and thus w.h.p.
		$\mathcal{N}(G,K_m)=(1+o(1))\mathcal{N}(G',K_m)$. \color{black}
		Furthermore, if there is a copy of $H_0$ in $G'$ then each edge of it
		must be contained in a copy of $K_m$ and not in two or more such copies.
		\color{black}
		
		Let $\mathcal{H}_m$ be the family of the following graphs. Every graph
		in $\HY_m$ is an edge disjoint union of copies of $K_m$, it contains a
		copy of $H_0$ and removing any copy of $K_m$ makes it $H_0$-free. Note
		that if $G$ is $\mathcal{H}_m$-free then $G'$ is $H_0$-free.
		
		To show that $G$ is indeed $\HY_m$-free w.h.p.\ we prove that for any
		$H'\in \mathcal{H}_m$ the expected number of copies of it in $G$ is
		$o(1)$. We will show this for  $p=n^{-\frac{1}{m_2(H)}+\delta}$, and it
		will thus clearly hold for smaller values of $p$ as well. For every
		$H'$ the expected number of copies of it in $G(n,p)$ is
		$\Theta(p^{e(H')}n^{v(H')})=
		\Theta(n^{-\frac{1}{m_2(H)}e(H')+v(H')}n^{\delta\cdot e(H')})$ and we
		want to show that it is equal $o(1)$ for any $H'$. For this it is
		enough to show that
		$-\frac{e(H')}{v(H')}+m_2(H)-\delta\frac{e(H')}{v(H')}m_2(H)<0$. We
		first prove that $$d(H'):=\frac{e(H')}{v(H')}>m_2(H)+\delta'$$ {f}or
		some $\delta':=\delta'(m,c)$ and then to finish show that
		$\frac{e(H')}{v(H')}m_2(H)\leq g(m)$ for some function $g$. 

		Note that every $H'\in \mathcal{H}_m$ contains a copy of $H_0$ and that
		$H_0$ itself does not contain a copy of $K_m$ as $m_2(H_0)<m_2(K_m)$.
		The vertices of copies of $K_m$ in $H'$ can be either all from $H_0$ or
		use some external vertices. Let $E_1$ be the edges between two vertices
		of $H_0$ that are not part of the original $H_0$ and let $|E_1|=e_1$.
		Furthermore, let $V_1\cup ...\cup V_k=V(H')\setminus V(H_0)$ be the
		external vertices, where each $V_i$ creates a copy of $K_m$ with the
		other vertices from $H_0$ and let $|V_i|=v_i$.
		
		Each edge in $H_0$ must be {a} part of a copy of $K_m$. An edge in
		$E_1$ takes care of at most $\binom{m}{2}-1$ edges from $H_0$, and each
		$V_i$ takes care of at most $\binom{m-v_i}{2}$ edges. From this we get
		that
		
		\begin{align*} e(H_0)\leq& \sum_{i=1}^{k} \binom{m-v_i}{2} +e_1
		(\binom{m}{2}-1) \\ \leq & k\binom{m-1}{2}+e_1(\binom{m}{2}-1) \\ \leq
		& \frac{m^2}{2}(k+e_1){.} \end{align*}
		
		We will take care of two cases, either $e_1\geq \frac{e(H_0)}{m^2}$ or
		$k\geq \frac{e(H_0)}{m^2}$. In the first case let $H_1$ be the graph
		$H_0$ together with the edges in $E_1$. Then $$
		\frac{e(H_1)}{v(H_1)}=\frac{e(H_0)+e_1}{v(H_0)}\geq
		(1+\frac{1}{m^2})\frac{e(H_0)}{v(H_0)}{.} $$ We can assume $v(H_0)$ is
		large enough so that
		$\frac{e(H_0)}{v(H_0)}/\frac{e(H_0)-1}{v(H_0)-2}\geq
		(1-\frac{1}{2m^2})$ and as $m_2(H_0)$ is bounded from below by a
		function of $m${,} we get that for some $\delta':=\delta'(m)$ small
		enough we get $$\frac{e(H_1)}{v(H_1)}\geq m_2(H_0)+\delta'{.}$$ {H}ence
		w.h.p.\ there is no copy of $H_1$ in $G$, and thus no copy of $H'$.
		
		Now let us assume that $k\geq \frac{e(H_0)}{m^2}$ and let $\gamma =
		m_2(K_m)-m_2(H)\geq m_2(K_m)-c$. The expression
		$\frac{\binom{v_i}{2}+v_i(m-v_i)}{v_i}$ decreases with $v_i$, and as
		$V_i$  creates a copy of $K_m$ with an edge of $H_0${,} we get that
		$v_i\leq m-2$ and so $\frac{\binom{v_i}{2}+v_i(m-v_i)}{v_i}\geq
		\frac{\binom{m}{2}-1}{m-2}$. It follows that
		
		\begin{equation} \label{eq:30} \sum_{i=0}^{k} \binom{v_i}{2} +
		v_i(m-v_i)\geq \sum_{i=0}^k v_i
		\frac{\binom{m}{2}-1}{m-2}=\sum_{i=0}^{k}v_i (m_2(H_0)+\gamma){.}
		\end{equation}
		
		Every set of vertices $V_i$ uses at least one edge in $H_0$ for a copy
		of $K_m$, and as there are no two copies of $K_m$ sharing an edge{,}
		\color{black} it follows that:\color{black} \begin{equation*}
		v(H')=v(H_0)+\sum_{i=0}^{k}v_i \leq e(H_0)+(m-1)e(H_0)=m\cdot e(H_o){.}
		\end{equation*} {C}ombining this with the assumption on $k$ we
		\color{black}conclude\color{black} \begin{equation}\label{eq:31}
		\sum_{i=0}^k v_i \geq k \geq \frac{e(H_0)}{m^2}\geq \frac{v(H')}{m^3}.
		\end{equation} Finally a direct calculation {yields} \begin{equation}
		\label{eq:32} e(H_0)+e_1> e(H_0)-1=\frac{e(H_0)-1}{v(H_0)-2}
		(v(H_0)-2)=m_2(H_0)(v(H_0)-2){.} \end{equation}

		Applying the above inequalities we get
		
		\begin{align*}
		e(H')=&e(H_0)+e_1+\sum_{i=0}^{k}\binom{v_i}{2}+v_i(m-v_i)\\
		\overset{\ref{eq:30},\ref{eq:32}}{\mbox{ }\geq}& m_2(H)(\sum_{i=0}^{k}
		v_i + v(H_0)-2)+\sum_{i=0}^{k} v_i \gamma \\ =
		&m_2(H)(v(H')-2)+\sum_{i=0}^{k} v_i \gamma \\ 
		\overset{\ref{eq:31}}{\geq} & m_2(H)(v(H')-2)+\frac{v(H')}{m^3}\gamma\\
		\geq& v(H')(m_2(H_0)+\frac{1}{2m^3}\gamma){.} \end{align*} The last
		inequality \color{black}holds \color{black} if $2m_2(H)\leq
		v(H')\frac{\gamma}{2m^3}${,} but this {is} true as $v(H_0)$ is large
		enough. \color{black} Thus, \color{black} for $\delta':=\delta'(m,c)$
		small enough{,} $$\frac{e(H')}{v(H')}\geq
		m_2(H_0)+\frac{1}{2m^3}\gamma\geq
		m_2(H_0)+\frac{1}{2m^3}(m_2(K_m)-c)\geq m_2(H_0)+\delta' $$ and again,
		w.h.p. $G$ will not have a copy of $H'$.
		
		It is left to show that indeed $\frac{e(H')}{v(H')}m_2(H)\leq g(m)$.
		\color{black}By \color{black} the definition of $H'$ we get that
		$\frac{e(H')}{v(H')}< \frac{e(H_0)+(m-2)e(H_0)}{v(H')}
		=(m-1)\frac{e(H_0)}{v(H_0)}${.  A}s we may assume that $v(H_0)$ is
		large{,} it follows that $\frac{e(H_0)}{v(H_0)}\leq
		m_2(H_0)(1+\frac{1}{m})$, and as $m_2(H)<m_2(K_m)${,} \color{black} we
		conclude \color{black} that for some $g(m)$ the needed inequality holds.
		
		%
		%
		%
		%
		%
		
	\end{proof}
	
	To finish this section, {we} show that indeed the theorem can be applied
	to  $G(m+1,\epsilon)$.
	
	\begin{proof}[Proof of Lemma \ref{lem:bigV}] 
		To prove this we will use the following fact. If $\frac{a}{b}$ and
		$\frac{p}{q}$ are rational numbers such that
		$0<|\frac{a}{b}-\frac{p}{q}|\leq \frac{1}{bM}$  then $p\geq M$. Indeed,
		assume towards {a} contradiction that $q<M$\color{black}, \color{black}
		but then $|\frac{a}{b}-\frac{p}{q}|=|\frac{aq-bp}{bq}|\geq
		\frac{1}{bq}> \frac{1}{bM}$.
		
		Let $G_0:=G_0(m+1,\epsilon)$, and take
		$\frac{a}{b}=\frac{(m+2)(m-1)}{2m}$ and
		$\frac{p}{q}=\frac{e(G_0)-1}{v(G_0)-2}$. \color{black}By \color{black}
		Theorem \ref{thm:existOfH} \color{black} it follows \color{black} that
		$|\frac{a}{b}-\frac{p}{q}|\leq \epsilon \frac{(m+2)(m-1)}{2m}$.
		Choosing $\epsilon$ small enough will make $v(G_0)$ as large as needed.

		%
	\end{proof}

	\section{Concluding remarks and open problems} 
	\begin{itemize} \item It is interesting to note that there are two main
		behaviors of the function $ex(G(n,p),K_m,H)$ that we know of. For $K_m$
		and $H$ with $\chi(H)=k>m$  for small $p$ one gets that an $H$-free
		subgraph of $G\sim G(n,p)$ can contain w.h.p.\ most of the copies of
		$K_m$ in the original $G$. On the other hand, when $p >\max \{
		n^{-1/m_2(H)},n^{-1/m_2(K_m)} \}$ then an $H$-free graph with the
		maximal number of $K_m$s is essentially w.h.p.\ $k-1$ partite, thus has
		a constant proportion less copies of $K_m$ than $G$.
		
		If $m_2(H)>m_2(K_m)$ then Theorem \ref{thm:m_2InEasyOrder} shows that
		the behavior changes at $p=n^{-1/m_2(H)}$, but if $m_2(H)<m_2(K_m)$ the
		critical value of $p$ is bounded away from $n^{-1/m_2(H)}$ and it is
		not clear where exactly it is.
		
		Looking at the graph $G\sim G(n,p)$ and taking only edges that take
		part in a copy of $K_m$ yields another random graph $G|_{K_m}$. The
		probability of an edge to take part in $G|_{K_m}$ is  $\Theta(p\cdot
		n^{m-2}p^{\binom{m}{2}-1})$. A natural conjecture is that if
		$n^{m-2}p^{\binom{m}{2}}$ is much bigger than $n^{-1/m_2(H)}$ then when
		maximizing the number of $K_m$ in an $H$-free subgraph we cannot avoid
		a copy of $H$ by deleting a negligible number of copies of $K_m$ and
		when $n^{m-2}p^{\binom{m}{2}}$ is much smaller than $n^{-1/m_2(H)}$ we
		can keep most of the copies of $K_m$ in an $H$-free subgraph of $G\sim
		G(n,p)$. It would be interesting to decide if this is indeed the case.
		
		\item Another possible model of a random graph, tailored specifically
		to ensure that each edge lies in a copy of $K_m$, is the following.
		Each $m$-subset of a set of $n$ labeled vertices, randomly and
		independently, is taken as an $m$-clique with probability $p(n)$. In
		this model the resulting random  graph $G$  is equal to its subgraph
		$G|_{K_m}$ defined in the previous paragraph, and  one can study the
		behavior of the maximum possible number of copies of $K_m$ in an
		$H$-free subgraph of it for all admissible values of $p(n)$.

		\item There are other graphs $T$ and $H$ for which $ex(n,T,H)$ is
		known, and one can study the behavior of $ex(G(n,p),T,H)$ in these
		cases. For example in \cite{HHKNR} and independently in \cite{G}  it is
		shown that $ex(n,C_5,K_3)=(n/5)^5$ when $n$ is divisible by 5.
		
		Using some of the techniques in this paper we can prove that for
		\color{black} $p\gg n^{-1/2}=n^{-1/m_2(K_3)}$, 
		$ex(n,C_5,K_3)=(1+o(1))(np/5)^5$ w.h.p.\ whereas if $p\ll n^{-1/2}$
		\color{black} then w.h.p. $ex(n,C_5,K_3)=(\frac{1}{10}+o(1))(np)^5$.
		Similar results can be proved in additional cases for which
		$ex(n,T,H)=\Omega(n^{t})$ where $t$ is the number of vertices of $T$.
		As observed in \cite{ASh}{,} these are exactly all pairs of graphs
		$T,H$ where $H$ is not a subgraph of any blowup of $T$.
		
		\item When investigating $ex(G(n,p),T,H)$ here we focused on the case
		that $T$ is a complete graph. It is possible that a variation
		\color{black} of \color{black} Theorem \ref{thm:m_2InEasyOrder} can be
		proved for any $T$ and $H$ satisfying $m_2(T)>m_2(H)$, even without
		knowing the exact value of $ex(n,T,H)$. %
		
		\item In the cases studied here for non-critical values of $p$,
		$ex(G(n,p),T,H)$ is always either almost all copies of $T$ in $G(n,p)$
		or $(1+o(1))ex(n,T,H)p^{e(T)}$. It would be interesting to decide if
		such a phenomenon holds for all $T$, $H$.
		
		\item As with the classical Tur\'an problem, the question studied here
		can be investigated for a  general graph $T$ and finite or infinite
		families $\mathcal{H}$. 
		
	\end{itemize}
	
	
	\section*{Acknowledgment} \color{black} We thank an anonymous referee
	for valuable and helpful comments. \color{black}


\begin{thebibliography}{99}
		
		\bibitem{ABGKM} P. Allen, J. B\"ottcher, S. Griffiths, Y. Kohayakawa
		and R. Morris, Chromatic thresholds in sparse random graphs, Random
		Structures \& Algorithms (2016), to appear.
		
		\bibitem{ACHKRS}
		
		N. Alon, A. Coja-Oghlan, H. H\'an, M. Kang, V. R\"odl and M. Schacht,
		Quasi-randomness and algorithmic regularity for graphs with general
		degree distributions, SIAM Journal on Computing, 39(6), 2336-2362,
		(2010).
		
		
		\bibitem{ASh} N. Alon and C. Shikhelman, Many $T$ copies in $H$-free
		graph, Journal of Combinatorial Theory, Series B,  121, 146-172,
		(2016).
		
		\bibitem{AS} N. Alon and J. H. Spencer, The Probabilistic Method,
		Fourth Edition, Wiley, (2014).
		
		\bibitem{BMS} J. Balogh, R. Morris, and W. Samotij, Independent sets in
		hypergraphs, Journal of the American Mathematical Society 28.3, 669-709
		(2015).
		
		
		
		
		\bibitem{CG} D. Conlon, and T. Gowers, Combinatorial theorems in sparse
		random sets, Annals of Mathematics 184.2, 367-454 (2016).
		
		\bibitem{CGSS} D. Conlon, T. Gowers, W. Samotij and M. Schacht, On the
		K\L R conjecture in random graphs, Israel Journal of Mathematics,
		203(1), 535-580  (2014).
		
		
		
		\bibitem{FR} P. Frankl and V. R\"odl, Large triangle-free subgraphs in
		graphs without $K_4$, Graphs and Combinatorics 2.1, 135-144 (1986).
		
		
		\bibitem{G} A. Grzesik, On the maximum number of five-cycles in a
		triangle-free graph, Journal of Combinatorial Theory Series B, 64.2,
		1061-1066 (2012).
		
		\bibitem{HHKNR} H. Hatami, J. Hladk\'y, D. Kr\'al', S. Norine and A.
		Razborov, On the number of pentagons in triangle-free graphs, Journal
		of Combinatorial Theory Series A, 120. 3, 722-732 (2013).
		
		
		\bibitem{HKL95} P. Haxell, Y. Kohayakawa and T. \L uczak, Tur\'an′s
		extremal problem in random graphs: forbidding even cycles, Journal of
		Combinatorial Theory, Series B 64.2, 273-287 (1995).
		
		\bibitem{HKL96} P. Haxell, Y. Kohayakawa, and T. \L uczak, Tur\'an's
		extremal problem in random graphs: forbidding odd cycles, Combinatorica
		16.1, 107-122 (1996).
		
		
		%
		%
		
		\bibitem{KLR} Y. Kohayakawa, T. \L uczak, and V. R\"odl, On $K^4$-free
		subgraphs of random graphs, Combinatorica 17.2, 173-213 (1997).
		
		\bibitem{KR} Y. Kohayakawa and V. R\"odl, Szemer\'edi’s regularity
		lemma and quasi-randomness, Recent advances in algorithms and
		combinatorics, 289-351 (2003).
		
		
		\bibitem{KY} A.V. Kostochka and M. Yancey, Ore's Conjecture on
		color-critical graphs is almost true, Journal of Combinatorial Theory,
		Series B,  109, 73-101 (2014).
		
		
		\bibitem{L} T. \L uczak, On triangle-free random graphs, Random
		Structures and Algorithms 16.3, 260-276 (2000).
		
		
		
		\bibitem{MR} M. Molloy and B. Reed, Graph Colouring and the
		Probabilistic Method, Springer, Berlin, (2002).
		
		
		\bibitem{RR} V. R\"odl and A. Ruci\'nski, Threshold functions for
		Ramsey properties, Journal of the American Mathematical Society 8,
		917-942, (1995).
		
		\bibitem{ST} D. Saxton and A. Thomason, Hypergraph containers,
		Inventiones mathematicae 201.3, 925-992, (2015).
		
		\bibitem{Sc} M. Schacht, Extremal results for random discrete
		structures, Annals of Mathematics 184.2, 333-365, (2016).
		
		\bibitem{S} A. Scott, Szemer\'edi's regularity lemma for matrices and
		sparse graphs, Combinatorics, Probability and Computing 20.03, 455-466,
		(2011).
		
		\bibitem{Si} M. Simonovits, Paul Erd\H os' influence on extremal graph
		theory, The Mathematics of Paul Erd\H os II, Springer Berlin
		Heidelberg, 148-192, (1997).
		
		
	\end{thebibliography}
\end{document}